\documentclass{article}
\usepackage[utf8]{inputenc}
\usepackage[T1]{fontenc}
\usepackage{amsmath,amssymb}
\usepackage{graphicx}
\usepackage{amsthm, enumerate}
\theoremstyle{remark}
\usepackage{hyperref}
\usepackage{latexsym}
\usepackage{xypic}
\usepackage[all]{xy}
\usepackage{pxfonts}
\usepackage{mathrsfs}
\usepackage{fullpage}
\usepackage{wasysym}
\usepackage[affil-it]{authblk}



\def\deltaII{\normalsize\blacktriangle\mspace{-12mu}\mbox{\Large$\bigtriangleup$}}

\def\car{ \square }

\def\tc{ \mathcal{A}ct }
\def\sc0{ \mathcal{A}ct_{>0} }

\def\pent{ $\begin{large}$
\pentagon
$\end{large}$ } 

\def\penta{ $\begin{large}$
\tilde{\pentagon}
$\end{large}$ }

\theoremstyle{definition}
\newtheorem{defi}{Definition}[section]
\newtheorem{const}[defi]{Construction}
\newtheorem{expl}[defi]{Example}
\newtheorem{app}[defi]{Application}

\newtheorem*{conv}{Convention}
\newtheorem*{plan}{Organization of the paper}
\newtheorem*{merci}{Acknowlegments}

\theoremstyle{plain}
\newtheorem{pro}[defi]{Proposition}
\newtheorem{thm}[defi]{Theorem}

\newtheorem{cor}[defi]{Corollary}

\newtheorem{lmm}[defi]{Lemma}

\theoremstyle{remark}

\newtheorem{rmk}[defi]{Remark}
\newtheorem{notat}[defi]{Notation}
\newtheorem*{sproof}{Sketch of proof}



\title{Swiss-cheese action on the totalization of operads under the monoid actions operad}
\author{Julien Ducoulombier}
\date{}

\begin{document}
\maketitle 
\begin{abstract}
\noindent
We prove that if a pair of semi-cosimplicial spaces $(X_{c}^{\bullet}\,;\, X^{\bullet}_{o})$ arise from a coloured operad then the semi-totalization $sTot(X_{o}^{\bullet})$ has the homotopy type of a relative double loop space and the pair $(sTot(X_{c}^{\bullet})\,;\,sTot(X_{o}^{\bullet}))$ is weakly equivalent to an explicit algebra over the two dimensional Swiss-cheese operad $\mathcal{SC}_{2}$.
\end{abstract}

\section*{Introduction}

A multiplicative operad $O$ is an operad under the associative operad $As$. In 
\cite{MR2061854} McClure and Smith build a cosimplicial space $O^{\bullet}$ from the multiplicative operad $O$ and show that, under some conditions, its homotopy totalization is a double loop space. V. Turchin in \cite{Tourtchine:arXiv1012.5957} and independently Dwyer and Hess in \cite{2010arXiv1006.0874D} are able to identify the space of double delooping and prove, under the assumption : $O(0)\simeq O(1) \simeq \ast$, that 
\begin{center}
$
hoTot(O^{\bullet})\simeq \Omega^{2} Operad^{h}(As\,;\,O),
$
\end{center}
where $Operad^{h}(As\,;\,O)$ is the space of derived maps from the associative operad to $O$.

In order to prove this statement, V. Turchin introduces the categories of bimodules and infinitesimal bimodules over an operad $O$, denoted respectively by $Bimod_{O}$ and $Ibimod_{O}$, such that $hoTot(O^{\bullet})$ is weakly equivalent to $Ibimod_{As}^{h}(As\,;\,O)$. Then he proves the following two weak equivalences:
\begin{center}
$
Ibimod_{As}^{h}(As\,;\,O)\simeq \Omega Bimod_{As}^{h}(As\,;\,O) \,\,\,\,
$ and 
$
\,\,\,\,
Bimod_{As}^{h}(As\,;\,O)\simeq \Omega Operad^{h}(As\,;\,O).
$
\end{center}

This result was motivated by the following theorem of D. Sinha: the space of long knots $Emb_{c}(\mathbb{R}\,;\,\mathbb{R}^{d})$ has the homotopy type of $hoTot(\mathcal{K}_{d}^{\bullet})$ where $\mathcal{K}_{d}$ is a multiplicative operad weakly equivalent to the little $d$-disk operad. The \textit{Swiss-cheese} operad $\mathcal{SC}_{d}$ is a relative version of the little disc operad. It is a two coloured topological operad with set of colours $S=\{o\,;\,c\}$ that has been introduced by A. Voronov in \cite{VoronovSC}. In particular, if $f:A\rightarrow X$ is a pointed continuous map then the following pair is an $\mathcal{SC}_{d}$-space:
\begin{center}
$
\big(\Omega^{d} X\,;\, \Omega^{d}(X\,;\,A)\big)\, :=\, \big( \Omega^{d} X \,;\, hofib(\Omega^{d-1}A\rightarrow \Omega^{d-1}X)\big). 
$ 
\end{center}

In this paper we make great use of the operad $\pi_{0}(\mathcal{SC}_{1})$ which is the operad of monoid actions $\tc$: it is a $2$-coloured operad whose algebras are the pairs of spaces $(X\,;\,A)$ where $X$ is a monoid and $A$ a left $X$-module. The operad $\sc0$ is the non-unital version of $\tc$. Similarly to the uncoloured case there is a notion of $\sc0$-
bimodule and $\sc0$-infinitesimal bimodule. We prove that if $O$ is an 
operad under $\tc$ then it gives rise to a pair of semi-cosimplicial spaces 
$(O_{c}\,;\, O_{o})$ such that the pair $(sTot(O_{c}
)\,;\,sTot(O_{o}))$ is weakly equivalent to:
\begin{center}
$
\left(
\Omega^{2} Operad^{h}(As_{>0}\,;\,O_{c})\,;\, \Omega^{2}\big( Operad^{h}(As_{>0}\,;\,O_{c})\,;\, Operad^{h}(\sc0\,;\,O)\big)
\right).
$ 
\end{center}
that is, an $\mathcal{SC}_{2}$-space.

\hspace{-300cm}\footnote{Universit\'e Paris 13, Sorbonne Paris Cit\'e, LAGA, CNRS, UMR 7539, 93430 \mbox{Villetaneuse}, France}
\footnote{Key words: coloured operads, loop spaces, cosimplicial spaces, model categories}
\footnote{ducoulombier@math.univ-paris13.fr}

\begin{plan}
The paper is divided into six sections. The first one is an introduction. It describes the categories of coloured operads, bimodules and infinitesimal bimodules over an operad. An explicit description of a point $X$ in $Bimod_{\sc0}$ and $Ibimod_{\sc0}$ in terms of pairs of semi-cosimplicial spaces $(X_{c}\,;\,X_{o})$ is given. We insist on the link between bimodule structures over $\sc0$ and monoidal structures on  semi-cosimplicial spaces introduced by McClure and Smith in \cite{MR2061854}.

The second section introduces the left adjoint functors to the forgetful functors from the categories of bimodules and infinitesimal bimodules over an $S$-coloured operad to the category of $S$-sequences. These adjunctions will be used in the third section in order to define a model category structure on $Bimod_{O}$ and $Ibimod_{O}$.  We also determine an explicit cofibrant replacement of $\tc$ (resp. $\sc0$) in the model category $Ibimod_{\sc0}$ (resp. $Bimod_{\sc0}$) and prove the weak equivalence:
$$
Ibimod_{\sc0}^{h}(\tc\,;\,M)\simeq Ibimod_{As_{>0}}^{h}(As\,;\,M_{c}),
$$
where $M$ is an $\sc0$-infinitesimal bimodule and $M_{c}$ is its closed part.

In section four we prove the first relative delooping theorem. From an $\sc0$-bimodule map $\eta:\tc\rightarrow M$ we extract two semi-cosimplicial spaces $(M_{c}\,;\,M_{o})$. We prove, under some conditions,  the weak equivalence of pairs: 
$$
\big(sTot(M_{c})\,;\,sTot(M_{o})\big)\simeq \big( \Omega Bimod_{As_{>0}}^{h}(As_{>0}\,;\,M_{c})\,;\, \Omega \big( Bimod_{As_{>0}}^{h}(As_{>0} \,;\, M_{c})\,;\, Bimod_{\sc0}^{h}(\sc0 \,;\, M)\big)\,\big).
$$

Section five consists in considering a particular case where a double relative delooping theorem holds. Namely, let $\alpha : As\rightarrow O$ be a map of operads and $\beta:O\rightarrow B$ be a map of $O$-bimodules. The two objects $O$ and $B$ are equipped with semi-cosimplicial structures. Under some conditions, we prove the following weak equivalence of pairs:
$$
\big(sTot(O)\,;\,sTot(B)\big)\simeq\big( \Omega^{2} Operad^{h}(As_{>0}\,;\,O)\,;\, \Omega^{2}\big( Operad^{h}(As_{>0}\,;\,O)\,;\,Operad_{\{o\,;\,c\}}^{h}(\sc0\,;\,X)\big)\,\big),
$$
where $X$ is a coloured operad build out of $O$ and $B$.

The last section is devoted to the proof of the main theorem: if $O$ is an $\{o\,;\,c\}$-operad under $\tc$ such that $O(0\,;\,c)\simeq O(1\,;\,c)\simeq O(1\,;\,o)\simeq \ast$ then we have the weak equivalence of pairs: 
$$
\big(sTot(O_{c})\,;\,sTot(O_{o})\big)\simeq\big( \Omega^{2} Operad^{h}(As_{>0}\,;\,O_{c})\,;\, \Omega^{2}\big( Operad^{h}(As_{>0}\,;\,O_{c})\,;\,Operad_{\{o\,;\,c\}}^{h}(\sc0\,;\,O)\big)\,\big).
$$
\end{plan}

\begin{conv}
By space we mean compactly generated Hausdorff space and by abuse of notation we denote by $Top$ this category (see e.g. \cite{MR1650134} section 2.4). If $X$, $Y$ and $Z$ are spaces then $Top(X;Y)$ is equipped with the compact-open topology in order to have a homeomorphism $Top(X;Top(Y;Z))\cong Top(X\times Y;Z)$.

A \textit{semi-cosimplicial space} $X^{\bullet}$ is a family of topological spaces $\{X^{n}\}_{n\geq 0}$ endowed with operations,
\begin{center}
 $d^{i}:X^{n}\rightarrow X^{n+1},\,\,\,\,\,$ for $i\in \{0,\ldots,n+1\}$,
\end{center}
satisfying the cosimplicial relations: $d^{j}d^{i}=d^{i}d^{j-1}$ for $0\leq i<j$. By \textit{semi-totalization} $sTot(X^{\bullet})$ we mean the space of natural transformations from the semi-cosimplicial space $\Delta^{\bullet}$ to $X^{\bullet}$. The semi-totalization is also called fat-totalization and it is a homotopy invariant. Since the homotopy totalization is weakly equivalent to the semi-totalization \cite[Lemma 3.8]{MR0458417}, we will  ignore the codegeneracies in the present work. We denote weak equivalences by the symbol $\simeq$.
\end{conv}

\begin{merci}
I would like to thank my PhD advisor Muriel Livernet for her input on this project.
I thank also Victor Turchin for his interest in this project and fruitful conversations.
\end{merci}


\newpage
\section{Bimodules and infinitesimal bimodules over a coloured operad}

In what follows we introduce the category of coloured operads as well as the  categories of bimodules and infinitesimal bimodules over a coloured operad. 
We focus on the operads with two colours $\{o\,;\,c\}$ called $\{o\,;\,c\}$-operads. In particular we define the $\{o\,;\,c\}$-operad $\sc0$ of monoid actions as in \cite{2013arXiv1312.7155H}. Besides, we characterize the bimodules and infinitesimal bimodules over this operad in terms of semi-cosimplicial spaces.

\subsection{The operad of (unital) monoid actions}

\begin{defi}
Let $S$ be a set. An \textit{$S$-sequence} is a collection of topological spaces $\{O(s_{1},\ldots,s_{n}; s_{n+1})\}_{s_{i}\in S}^{n\in \mathbb{N}}$. The set $S$ is called the \textit{set of colours}. A map between two $S$-sequences $O_{1}$ and $O_{2}$ is a collection of continuous maps: $$\{f_{s_{1},\ldots,s_{n};s_{n+1}}:O_{1}(s_{1},\ldots,s_{n}; s_{n+1})\rightarrow O_{2}(s_{1},\ldots,s_{n}; s_{n+1})\}_{s_{i}\in S}^{n\in \mathbb{N}}.$$ We denote by $Coll(S)$ the category of $S$-sequences. 
\end{defi}

\begin{notat}
If $M$ is an $\{o\,;\,c\}$-sequence, then we use the following notation in the rest of the text:
\begin{center}
$ M_{c}^{n}=M(n\,;\,c)=M(\underset{n}{\underbrace{c,\ldots,c}};c)\,\,\,\,\,$ and
$ \,\,\,\,\, M_{o}^{n}=M(n+1\,;\,o)=M(\underset{n}{\underbrace{c,\ldots,c}},o;o)$.
\end{center} 
We denote by $M_{c}$ the family $\{M_{c}^{n}\}_{n\geq 0}$ and by $M_{o}$ the family $\{M_{o}^{n}\}_{n\geq 0}$.
\end{notat}

\begin{defi} 
An \textit{$S$-operad} is an $S$-sequence $O$ endowed with operations:\\{}\\
$\circ_{i}:O(s_{1},\ldots,s_{n};s_{n+1})\times O(s'_{1},\ldots,s'_{m};s_{i})
\rightarrow O(s_{1},\ldots,s_{i-1},s'_{1},\ldots,s'_{m},s_{i+1},\ldots,s_{n};s_{n+1}),\,\,\,\,\,$ for $1\leq i\leq n$,\\{}\\
and distinguished elements $\{\ast_{s} \in O(s;s)\}_{s\in S}$
satisfying associativity and unit axioms \cite{Arone:arXiv1105.1576}. We denote by $x\circ_{i}y$ the operation $\circ_{i}(x\,;\, y)$ for $x,y\in O$.
Define $Operad_{S}$ to be the category of $S$-operads where a map of $S$-operads is an $S$-sequence map which preserves the operadic structure.

Let $O$ be an $S$-operad and $A=\{A_{s}\}_{s\in S}$ be a family of topological spaces. The \textit{endomorphism $S$-operad} $End_{A}$ (see \cite{MR2342815}) is the family of spaces of continuous maps defined by:
$$
End_{A}(s_{1},\ldots,s_{n};s_{n+1})=Top(A_{s_{1}}\times \ldots \times A_{s_{n}};A_{s_{n+1}}).
$$
The family $A$ is called an \textit{$O$-space} if there exists a map of $S$-operads $O\rightarrow End_{A}$.
\end{defi}

\begin{defi}{\cite{2013arXiv1312.7155H}}\label{AM}
$\,$ Let $S=\{o,c\}$. The \textit{$S$-operad of monoid actions} $\sc0$ is given by the $S$-sequence: 
\begin{center}
$\sc0(n\,;\,c) = \ast_{n\,;\,c}\,\,\,\,$ for $n>0$, $\,\,\,\,\,\,\,\,\,\,\,\,\sc0(n\,;\,o) = \ast_{n\,;\,o}\,\,\,\,$ for $n>0$,  
\end{center}
and the empty set otherwise with $\ast_{n\,;\,c}$ and $\ast_{n\,;\,o}$ being the one point topological space. The compositions are the following:

\begin{equation}
\ast_{n+m-1\,;\,c}=\ast_{n\,;\,c}\circ_{i}\ast_{m\,;\,c}\,\,\,\,\,\,\, \text{and}  
\,\,\,\,\,\,\,\ast_{n+m-1\,;\,o}=
\left\{\begin{array}{cc}
\ast_{n\,;\,o}\circ_{i}\ast_{m\,;\,c}, &  \text{for}\,\,  i\neq n,\\ 
\ast_{n\,;\,o}\circ_{n}\ast_{m\,;\,o}. & 
\end{array} \right.  
\end{equation}

Similarly \textit{the $S$-operad of unital monoid actions} $\tc$ is given by the $S$-sequence:
\begin{center}
$\sc0(n\,;\,c) = \ast_{n\,;\,c}\,\,\,\,$ for $n\geq 0$, $\,\,\,\,\,\,\,\,\,\,\,\,\sc0(n\,;\,o) = \ast_{n\,;\,o}\,\,\,\,$ for $n>0$,  
\end{center}
and the empty set otherwise with the same compositions. Consequently, the $S$-operad $\tc$ (resp. $\sc0$) is generated by $\ast_{0\,;\,c}$, $\ast_{2\,;\,c}$ and $\ast_{2\,;\,o}$ (resp. $\ast_{2\,;\,c}$ and $\ast_{2\,;\,o}$).

An \textit{$\tc$-space} is a pair of topological spaces $(X;A)$ with $X$ a topological monoid with unit and $A$ a left module over $X$.

The $\{c\}$-sequence given by the restriction of $\tc$ (resp. $\sc0$) to the colour $\{c\}$ is the \textit{associative operad} $As$ (resp. the \textit{strict associative operad} $As_{>0}$). We use the notation $\ast_{n}$ to refer to the one point topological space $As(n)$.
\end{defi}

The operad of monoid actions has been introduced by Hoefel, Livernet and Stasheff in  \cite{2013arXiv1312.7155H} in the context of recognition principle for relative loop space.

\subsection{Infinitesimal bimodules over a coloured operad}

\begin{defi}\label{bimodule}
Let $O$ be an $S$-operad. An \textit{infinitesimal bimodule} over the operad $O$ (or $O$-infinitesimal bimodule) is an $S$-sequence $M$ endowed with operations: 
\begin{itemize}
\item[$\circ_{i}:$] $O(s_{1},\ldots,s_{n};s_{n+1})\times M(s'_{1},\ldots,s'_{m};s_{i}) 
\rightarrow M(s_{1},\ldots,s_{i-1},s'_{1},\ldots,s'_{m},s_{i+1},\ldots,s_{n};s_{n+1})$, $\,\,\,$  for $1\leq i\leq n$,
\item[$\circ^{i}:$] $M(s_{1},\ldots,s_{n};s_{n+1})\times O(s'_{1},\ldots,s'_{m};s_{i})
\rightarrow M(s_{1},\ldots,s_{i-1},s'_{1},\ldots,s'_{m},s_{i+1},\ldots,s_{n};s_{n+1})$, $\,\,\,$  for $1\leq i\leq n$,
\end{itemize}
satisfying associativity and unit relations \cite{Arone:arXiv1105.1576}. A map between $O$-infinitesimal bimodules is given by an $S$-sequence map preserving this structure. Let $Ibimod_{O}$ be the category of infinitesimal bimodules over $O$. We denote by $x\circ_{i} y$ (resp. $x\circ^{i} y$) the operation $\circ_{i}(x\,;\,y)$ (resp. $\circ^{i}(x\,;\,y)$) with $x\in O$ and $y\in M$ (resp. $x\in M$ and $y\in O$).
\end{defi}

\begin{expl} 
For any $S$-operad map $\eta:O_{1}\rightarrow O_{2}$, $\,\,O_{2}$ is endowed with the following $O_{1}$-infinitesimal bimodule structure:
\begin{center}
$\circ_{i}: \,\,\, O_{1}\times O_{2} \overset{\eta \times id}{\rightarrow}  O_{2}\times O_{2} \overset{\circ_{i}}{\rightarrow}  O_{2}\,\,\,\,\,\,\,$ and
$ \,\,\,\,\,\,\,\circ^{i}:\,\,\, O_{2}\times O_{1} \overset{id\times \eta}{\rightarrow}  O_{2}\times O_{2} \overset{\circ_{i}}{\rightarrow}  O_{2}$.
\end{center}
Consequently, if $A$ is an $O$-space then $End_{A}$ is an $O$-infinitesimal bimodule.
\end{expl}

\begin{defi}
Let $N$ and $M$ be two $S$-sequences. The sequence $M$ \textit{is of type} $N$ if: $$N(s_{1},\ldots,s_{n};s_{n+1})=\emptyset  \Rightarrow M(s_{1},\ldots,s_{n};s_{n+1})=\emptyset.$$
\end{defi}

\begin{pro}\label{SCwbimodule}
Let $M$ be an $\{o\,;\,c\}$-sequence of type $\tc$. The following assertions are equivalent:
\begin{itemize}
\item[$i)$] $M$ is an $\sc0$-infinitesimal bimodule;
\item[$ii)$] the families $M_{c}$ and $M_{o}$ are semi-cosimplicial spaces and there exists a semi-cosimplicial map $h:M_{c}\rightarrow M_{o}$.
\end{itemize} 
Moreover $i)\Rightarrow ii)$ even if $M$ is not of type $\tc$.
\end{pro}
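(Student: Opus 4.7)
The plan is to translate between $\sc0$-infinitesimal bimodule axioms on $M$ and the data of two semi-cosimplicial spaces $M_c$, $M_o$ together with a semi-cosimplicial map $h:M_c\to M_o$. The colour combinatorics of $\sc0$ --- each space is either empty or a singleton, and the whole operad is generated by $\ast_{2;c}\in\sc0(c,c;c)$ and $\ast_{2;o}\in\sc0(c,o;o)$ --- should make these two presentations of the data equivalent once the right dictionary is set.

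For $i)\Rightarrow ii)$, I would define on $M_c^n$ the coface operators $d^0 x=\ast_{2;c}\circ_2 x$, $d^i x=x\circ^i\ast_{2;c}$ for $1\le i\le n$, and $d^{n+1} x=\ast_{2;c}\circ_1 x$; and on $M_o^n$ the analogous operators with $d^0 x=\ast_{2;o}\circ_2 x$ and $d^{n+1}x=x\circ^{n+1}\ast_{2;o}$. Finally set $h(x)=\ast_{2;o}\circ_1 x$ for $x\in M_c^n$. Each of these maps lands in the prescribed arity by the colour constraints. The cosimplicial relations $d^j d^i=d^i d^{j-1}$ and the intertwining $d^i h=h d^i$ then follow by applying the mixed-associativity axioms of an infinitesimal bimodule together with the singleton nature of $\sc0$: for example $d^0 d^0 x=\ast_{2;c}\circ_2(\ast_{2;c}\circ_2 x)=(\ast_{2;c}\circ_2\ast_{2;c})\circ_3 x$, while $d^1 d^0 x=(\ast_{2;c}\circ_2 x)\circ^1\ast_{2;c}=(\ast_{2;c}\circ_1\ast_{2;c})\circ_3 x$, and the two expressions agree because $\ast_{2;c}\circ_2\ast_{2;c}=\ast_{2;c}\circ_1\ast_{2;c}$ is the unique element $\ast_{3;c}\in\sc0(3;c)$. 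This construction never appeals to the emptiness of other components of $M$, so $i)\Rightarrow ii)$ holds without the type hypothesis --- this is the final remark.

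For $ii)\Rightarrow i)$, I would invert the above formulas to define the actions of $\ast_{2;c}$ and $\ast_{2;o}$, and then extend to arbitrary $\ast_{n;c}$, $\ast_{n;o}$ by iterated composition: concretely, $\ast_{n;c}\circ_i(-)$ is realised as an appropriate sequence of $d^0$'s and $d^{n+1}$'s (mixed with $h$ in the open case). The type $\tc$ hypothesis ensures that all other components of $M$ are empty, so no further operations need to be defined. The well-definedness of this extension --- two factorisations of a given $\ast_{n;c}$ or $\ast_{n;o}$ into generators must yield the same operation on $M$ --- is exactly what the cosimplicial identities and the semi-cosimplicial property of $h$ provide. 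The main obstacle is the bookkeeping that matches each associativity case of the infinitesimal-bimodule axioms to exactly one cosimplicial identity $d^j d^i=d^i d^{j-1}$ or compatibility $d^i h=h d^i$; this is finite and tractable thanks to the singleton structure of $\sc0$, but requires care in tracking the index shifts between the left actions $\circ_i$ and the right actions $\circ^i$ across the two colours.
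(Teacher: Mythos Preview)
Your proposal is correct and follows essentially the same approach as the paper: the paper defines the cofaces on $M_c$ and $M_o$ and the map $h$ by exactly the formulas you wrote, appeals to the infinitesimal bimodule axioms together with the relations in $\sc0$ for the semi-cosimplicial identities, and for the converse invokes that $\sc0$ is generated as a coloured operad by $\ast_{2;c}$ and $\ast_{2;o}$. Your write-up is in fact slightly more explicit than the paper's (you spell out a sample verification of $d^1d^0=d^0d^0$ and articulate why the type-$\tc$ hypothesis is needed only for the converse), but there is no difference in strategy.
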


\begin{proof}
Let $M$ be an $\sc0$-infinitesimal bimodule. For $n\in \mathbb{N}$,  let $h:M_{c}^{n}\rightarrow M_{o}^{n}$ defined by  $h(x)=\ast_{2\,;\,o}\circ_{1} x$.
The semi-cosimplicial structure is given as usual (see e.g \cite{Arone:arXiv1105.1576} , \cite{MR2061854} and \cite{MR2188133}) by:
$$d^{i}:M_{c}^{n}\rightarrow M_{c}^{n+1};\,\, x\mapsto 
\left\{ \begin{array}{ll}
\ast_{2\,;\,c} \circ_{2} x, & $if$ \,\,\,\,\,i=0, \\ 
x\circ^{i}\ast_{2\,;\,c}, & $if$ \,\,\,\,\,i\in \{1,\ldots , n\}, \\ 
\ast_{2\,;\,c} \circ_{1} x, & $if$ \,\,\,\,\,i=n+1,
\end{array}\right.
\,\, \text{and} \,\,\,\,
d^{i}:M_{o}^{n}\rightarrow M_{o}^{n+1}; \,\,x\mapsto
\left\{ \begin{array}{lll}
\ast_{2\,;\,o} \circ_{2} x, & $if$ & i=0, \\ 
x\circ^{i}\ast_{2\,;\,c}, & $if$ & i\in \{1,\ldots , n\}, \\ 
x\circ^{n+1}\ast_{2\,;\,o}, & $if$ & i=n+1.
\end{array}\right. $$
The reader can check that the relations $(1)$ of Definition \ref{AM} and   Definition \ref{bimodule} induce the semi-cosimplicial relations.

Conversely, if $h:M_{c}\rightarrow M_{o}$ is a semi-cosimplicial map, let $M(n\,;\,c)=M_{c}^{n}$, $ M(n+1\,;\,o)=M_{o}^{n}$ and the empty set otherwise. The left and right infinitesimal module structures are defined by the above construction, since $\sc0$ is generated by $\ast_{2\,;\,c}$ and $\ast_{2\,;\,o}$ as a coloured operad.
\end{proof}

It is proved in \cite{Tourtchine:arXiv1012.5957} that the category of semi-cosimplicial spaces is equivalent to the category of $As_{>0}$-infinitesimal bimodules. Consequently the collection $M_{o}=\{M_{o}^{n}\}_{n\geq 0}$ is an infinitesimal bimodule over $As_{>0}$. Since $As_{>0}$ is generated by $\ast_{2}$ as an operad, the structure of $M_{o}$ is given by:
\begin{equation}
\left\{\begin{array}{ll}
\ast_{2}\circ_{2} x = \ast_{2\,;\,o}\circ_{2} x, &  \text{for}\,\, x\in M_{o}^{n},  \\ 
\ast_{2}\circ_{1} x = x\circ^{n+1} \ast_{2\,;\,o}, &  \text{for}\,\, x\in M_{o}^{n},  \\ 
x\circ^{i}\ast_{2} =  x\circ^{i}\ast_{2\,;\,c}, &  \text{for}\,\, x\in M_{o}^{n} \,\,\text{and}\,\, i\in \{1,\ldots,n\}. 
\end{array}\right.
\end{equation}


\subsection{Bimodules over a coloured operad}

\begin{defi}
Let $O$ be an $S$-operad. An $S$-sequence $M$ is an \textit{$O$-bimodule} if it is endowed with operations:\\{}\\
$\gamma_{l}:O(s_{1},\ldots,s_{n};s_{n+1})\times M(s^{1}_{1},\ldots,s^{1}_{p_{1}};s_{1}) \times \cdots \times M(s^{n}_{1},\ldots,s^{n}_{p_{n}};s_{n}) \rightarrow  M(s^{1}_{1},\ldots,s^{n}_{p_{n}};s_{n+1}),\,\,\,\,\,\,$ for $1\leq i\leq n$,\\{}\\
$\circ^{i}:M(s_{1},\ldots,s_{n};s_{n+1})\times O(s'_{1},\ldots,s'_{m};s_{i}) \rightarrow  M(s_{1},\ldots,s_{i-1},s'_{1},\ldots,s'_{m},s_{i+1},\ldots,s_{n};s_{n+1}),\,\,\,\,\,\,$ for $1\leq i\leq n$,\\{}\\
satisfying associativity and unit axioms \cite{Arone:arXiv1105.1576}. A map between $O$-bimodules is an $S$-sequence map which preserves the bimodule structure. Let $Bimod_{O}$ be the category of $O$-bimodules. We denote by $x(y_{1},\cdots,y_{n})$ the operation $\gamma_{l}(x,y_{1},\cdots,y_{n})$ with $x\in O$ and $y_{i}\in M$.
\end{defi}

\begin{expl}
For any $S$-operad map $\eta:O_{1}\rightarrow O_{2}$, $\,\,O_{2}$ is endowed with the following $O_{1}$-bimodule structure:
\begin{center}
$\gamma_{l}: O_{1}\times O_{2}\times \cdots \times O_{2}  \overset{\eta \times id \cdots id}{\longrightarrow}  O_{2}\times \cdots \times O_{2} \rightarrow  O_{2}\,\,\,\,\,\,$ and 
$\,\,\,\,\,\,\circ^{i}: O_{2}\times O_{1}  \overset{id\times \eta }{\longrightarrow}  O_{2}\times  O_{2} \rightarrow  O_{2}$.
\end{center}
Consequently, if $A$ is an $O$-algebra then $End_{A}$ is an $O$-bimodule.
\end{expl}

A priori there is no relation between an $O$-bimodule structure and an $O$-infinitesimal bimodule structure because the left operations differ. However, if $\eta: O\rightarrow M$ is a morphism of $O$-bimodules then $M$ is an $O$-infinitesimal bimodule and the left infinitesimal bimodule structure is given by:
\begin{center}
$
\begin{array}{cccc}
\circ_{i}\,: & O(s_{1},\ldots,s_{n};s_{n+1})\times M(s'_{1},\ldots,s'_{m};s_{i}) & \rightarrow & M(s_{1},\ldots,s_{i-1},s'_{1},\ldots,s'_{m},s_{i+1},\ldots,s_{n};s_{n+1}) \\ 
 & (o\,;\,m) & \mapsto & o\,\big( \eta(\ast_{s_{1}}),\ldots,\eta(\ast_{s_{i-1}}),m,\eta(\ast_{s_{i+1}}),\ldots,\eta(\ast_{s_{n}})\big)
\end{array} 
$
\end{center}
where $\ast_{s}$ is the distinguished element in $O(s;s)$.

In \cite{MR2061854} McClure and Smith define a monoidal structure on the category of semi-cosimplicial spaces in order to recognize loop spaces. More precisely, they prove that the group completion of the semi-totalization of a monoid in this category has the homotopy type of a loop space. We recall this construction since we need it to describe $\sc0$-bimodules under $\tc$.

\begin{pro}{\cite[proposition 2.2]{MR2061854}}\label{definMS}
Let $X^{\bullet}$ and $Y^{\bullet}$ be two semi-cosimplicial spaces and let $X \boxtimes Y$ be the semi-cosimplicial space whose $m$-th space is given by:
$$
\left( \underset{p+q=m}{\coprod} X^{p}\times X^{q}\right) /\sim
,$$
where $\sim$ is the equivalence relation generated by $(x,d^{0}y)\sim (d^{|x|+1}x,y)$.\\
The semi-cosimplicial structure is the following:
$$
d^{i}(x,y)=
\left\{
\begin{array}{ccc}
(d^{i}x,y), & if & 0\leq i\leq |x|, \\ 
(x,d^{i-|x|}y), & if & |x|< i \leq |x|+|y|+1.
\end{array} 
\right.
$$
The category of semi-cosimplicial spaces equipped with $\boxtimes$ is a monoidal category denoted by $(Top^{\Delta_{inj}}\,,\,\boxtimes)$, with unit $e$ being the constant semi-cosimplicial one point space.
\end{pro}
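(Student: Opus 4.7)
The plan is to prove the proposition in three stages: first establishing that $\boxtimes$ produces a well-defined semi-cosimplicial space and is a bifunctor; next exhibiting associator and unitor natural isomorphisms; and finally verifying the pentagon and triangle coherence axioms.

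For the first stage, I would show that the proposed coface $d^{i}$ descends from $\coprod_{p+q=m} X^{p}\times Y^{q}$ to the quotient by the generating relation $(x, d^{0} y) \sim (d^{|x|+1} x, y)$. The verification splits according as $i$ lies below, above, or at the boundary $|x|+1$. In the two ``bulk'' ranges the formulas agree on the nose, since each side applies the same $d^{i}$ to the unchanged factor. The boundary case $i=|x|+1$ is the only nontrivial one: the left representative maps to $(x, d^{0}d^{0}y)$ (after using $d^{1}d^{0} = d^{0}d^{0}$), while the right maps to $(d^{|x|+1}d^{|x|+1}x, y)$; these are identified by two applications of the generating relation together with the cosimplicial identity $d^{|x|+2}d^{|x|+1} = d^{|x|+1}d^{|x|+1}$. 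Checking the semi-cosimplicial relations $d^{j}d^{i}=d^{i}d^{j-1}$ for $i<j$ on $X\boxtimes Y$ reduces to the corresponding identities on $X$ or $Y$ in all cases except when $i$ and $j$ straddle $|x|+1$, where a brief rewrite via the generating relation makes both sides coincide. Functoriality is then immediate, since the construction only invokes $d^{0}$ and $d^{|x|+1}$, which are preserved by semi-cosimplicial maps.

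For the associator, I would observe that $(X\boxtimes Y)\boxtimes Z$ and $X\boxtimes (Y\boxtimes Z)$ admit a common description at level $m$ as triples $(x,y,z)$ with $|x|+|y|+|z|=m$, modulo the two relations
$$
(x, d^{0}y, z) \sim (d^{|x|+1}x, y, z) \quad \text{and} \quad (x, y, d^{0}z) \sim (x, d^{|y|+1}y, z).
$$
On the left-bracketed side the outer relation reads $((x,y), d^{0}z)\sim (d^{|x|+|y|+1}(x,y), z)$, and this unfolds to the second relation above once one uses the defining formula $d^{|x|+|y|+1}(x,y)=(x,d^{|y|+1}y)$ for the cofaces on $X\boxtimes Y$; the mirror computation handles the right-bracketed side. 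The associator is thus the identity on representatives, automatically a semi-cosimplicial isomorphism. For the unitor, each class in $X\boxtimes e$ at level $m$ admits a unique representative $(x,\ast)$ with $|x|=m$: starting from $(x,\ast)\in X^{p}\times e^{m-p}$ with $p<m$, the generating relation identifies it with $(d^{p+1}x,\ast)$, and iterating raises the $X$-degree by one at each step until $|x|=m$. This yields $X\boxtimes e\cong X$, and symmetrically $e\boxtimes X\cong X$; compatibility of these bijections with the cofaces is a direct check.

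Finally, the pentagon and triangle axioms hold strictly, because the associator and unitors are induced by the identity map on tuples of representatives; there is simply nothing to commute. The main obstacle is the first stage: descending the cofaces across the quotient and recovering the cosimplicial identities requires careful bookkeeping with the generating relation and the semi-cosimplicial identities of $X$ and $Y$, and the boundary case $i=|x|+1$ is subtle enough that I would write it out in full rather than leave it to the reader.
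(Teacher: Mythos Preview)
The paper does not supply its own proof of this proposition: it is quoted verbatim from McClure and Smith with a citation, and the text moves immediately to the next proposition. So there is no in-paper argument to compare your approach against; you are effectively filling in what the paper outsources.

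Your outline is sound and would produce a correct proof. One phrasing issue: you write that in the two ``bulk'' ranges the formulas ``agree on the nose, since each side applies the same $d^{i}$ to the unchanged factor.'' This is not literally true. For $0\le i\le |x|$, say with $|x|=p$, the two representatives $(x,d^{0}y)$ and $(d^{p+1}x,y)$ map under $d^{i}$ to $(d^{i}x,d^{0}y)$ and $(d^{i}d^{p+1}x,y)$ respectively; these are not equal as tuples but are identified only after rewriting $d^{i}d^{p+1}=d^{p+2}d^{i}$ and then applying the generating relation once (since $|d^{i}x|=p+1$). The upper bulk range $i\ge |x|+2$ is symmetric. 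So each bulk case already needs one use of the relation plus one cosimplicial identity, just like the boundary case needs two. This does not affect the validity of your argument, but you should adjust the wording so the reader is not misled into expecting literal equality of representatives.

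Everything else---the common triple-quotient description yielding a strict associator, the normal-form argument for the unitors, and the resulting triviality of the coherence axioms---is correct as stated.
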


\begin{pro}\label{propSC}
Let $M$ be an $\{o\,;\,c\}$-sequence of type $\tc$. The following assertions are equivalent:
\begin{itemize}
\item[$i)$] $M$ is an $\sc0$-bimodule under $\tc$,
\item[$ii)$] in $(Top^{\Delta_{inj}}\,,\,\boxtimes)$ the family $M_{c}$ is a monoid with unit, the family $M_{o}$ is a $M_{c}$-left module and there exists a morphism of $M_{c}$-left module $h:M_{c}\rightarrow M_{o}$.
\end{itemize}
Moreover $i)\Rightarrow ii)$ even if $M$ is not of type $\tc$.
\end{pro}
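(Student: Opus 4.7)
The plan is to mirror Proposition \ref{SCwbimodule}. First observe that any $\sc0$-bimodule under $\tc$ automatically carries an induced $\sc0$-infinitesimal bimodule structure via the recipe recalled in the paragraph preceding Proposition \ref{definMS}; hence Proposition \ref{SCwbimodule} already delivers semi-cosimplicial structures on $M_c$ and $M_o$. The remaining content of $ii)$, namely the monoid and module structures in $(Top^{\Delta_{inj}},\boxtimes)$ together with the $M_c$-equivariant map $h$, will be extracted directly from the left $\sc0$-bimodule action $\gamma_l$.

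For the direction $i)\Rightarrow ii)$, I would define
\[
\mu_c(x\otimes y):=\gamma_l(\ast_{2,c};x,y),\qquad
\mu_o(x\otimes y):=\gamma_l(\ast_{2,o};x,y),\qquad
h(x):=\gamma_l(\ast_{2,o};x,\eta(\ast_{1,o})),
\]
and declare the unit $e\to M_c$ to be the semi-cosimplicial map sending the level-$n$ point to $\eta(\ast_{n,c})$. The only nontrivial check is the descent of $\mu_c$ (and similarly $\mu_o$) to the quotient defining $\boxtimes$: for $x\in M_c^p$, $y\in M_c^q$ one needs
\[
\gamma_l(\ast_{2,c};x,d^0 y)=\gamma_l(\ast_{2,c};d^{p+1}x,y).
\]
Substituting $d^0 y=\gamma_l(\ast_{2,c};e_c,y)$ and $d^{p+1}x=\gamma_l(\ast_{2,c};x,e_c)$ with $e_c:=\eta(\ast_{1,c})$, the associativity of $\gamma_l$ collapses both sides to $\gamma_l(\ast_{3,c};x,e_c,y)$ thanks to $\ast_{2,c}\circ_1\ast_{2,c}=\ast_{2,c}\circ_2\ast_{2,c}=\ast_{3,c}$ in $\sc0$. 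The same mechanism, now invoking $\ast_{2,o}\circ_1\ast_{2,c}=\ast_{2,o}\circ_2\ast_{2,o}=\ast_{3,o}$, proves that $\mu_o$ is well-defined and that $h$ commutes with the $M_c$-action. Associativity and unit axioms for $(M_c,\mu_c,e)$, as well as the module axioms for $(M_o,\mu_o)$, follow analogously from associativity of $\gamma_l$ together with the unit axioms of the bimodule. Because only the elements $\eta(\ast_{n,c})$ and $\eta(\ast_{1,o})$ of $M$ are needed, and these are supplied by the map $\tc\to M$ regardless of the sequence type of $M$, the moreover clause is immediate.

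For the converse $ii)\Rightarrow i)$, I would exploit the fact that $\sc0$ is generated as an operad by $\ast_{2,c}$ and $\ast_{2,o}$. The left action $\gamma_l$ is forced by the prescriptions above and extends to the higher operations $\ast_{n,c}$, $\ast_{n,o}$ via iterated operadic composition; consistency under the possible choices of iteration is guaranteed by associativity of $\mu_c$, $\mu_o$, and the $M_c$-equivariance of $h$. The right action $\circ^i$ with $\ast_{2,c}$ and $\ast_{2,o}$ is recovered from the coface operators of $M_c$ and $M_o$ exactly as in the proof of Proposition \ref{SCwbimodule}, and extends uniquely. Finally, the bimodule map $\eta:\tc\to M$ is forced to send $\ast_{n,c}\mapsto u_n$ (the monoidal unit of $M_c$ at level $n$) and $\ast_{n+1,o}\mapsto h(u_n)\in M_o^n$; the type-$\tc$ hypothesis ensures that all relevant spaces of $M$ are non-empty, so this map is well-defined on the full source operad.

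The main obstacle is purely combinatorial: tracking which tree composition of $\ast_{2,c}$ and $\ast_{2,o}$ in $\sc0$ corresponds to each iterated application of $\gamma_l$, and checking that the identifications forced by the $\boxtimes$-quotient coincide with those coming from operadic associativity. No conceptual input beyond repeated use of the associativity and unit axioms for a bimodule, together with the explicit composition table of $\sc0$, is expected.
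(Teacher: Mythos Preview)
Your proposal is correct and follows essentially the same route as the paper: both directions hinge on identifying the monoidal product with $\gamma_l(\ast_{2,c};-,-)$ and $\gamma_l(\ast_{2,o};-,-)$, setting $h(x)=\ast_{2,o}(x,\eta(\ast_{1,o}))$, and exploiting that $\sc0$ is generated by $\ast_{2,c}$ and $\ast_{2,o}$; your explicit verification of the descent to the $\boxtimes$-quotient is a detail the paper leaves to the reader.

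One small correction: your explanation of the type-$\tc$ hypothesis in the converse direction is backwards. The hypothesis does not guarantee that any space of $M$ is non-empty (the map $\eta$ is well-defined regardless); rather, it guarantees that $M(s_1,\ldots,s_n;s_{n+1})=\emptyset$ whenever the corresponding $\tc$-space is empty, so that prescribing the bimodule structure on the families $M_c$ and $M_o$ alone suffices to determine it on the entire $\{o,c\}$-sequence $M$.
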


\begin{proof}
Let $M$ be an $\sc0$-bimodule equipped with an $\sc0$-bimodule map $\eta:\tc \rightarrow M$. Let $M_{c}^{n}=M(n\,;\,c)$ and $M_{o}^{n}=M(n+1\,;\,o)$ for 
$n\in \mathbb{N}$. The bimodule structure induces the following cofaces:
\begin{equation}
\left\{
\begin{array}{llllll}
d^{i}&:M_{c}^{n}\rightarrow M_{c}^{n+1}&; & x\mapsto x\circ^{i}\ast_{2\,;\,c}, & \text{if} & i\in \{1,\ldots,n\}, \\ 
d^{i}&:M_{o}^{n}\rightarrow M_{o}^{n+1}&; & x\mapsto x\circ^{i}\ast_{2\,;\,c}, & \text{if} &i\in \{1,\ldots,n\}, \\ 
d^{n+1}&:M_{o}^{n}\rightarrow M_{o}^{n+1}&; & x\mapsto x\circ^{n+1}\ast_{2\,;\,o}, & &
\end{array} 
\right.
\end{equation}
satisfying the semi-cosimplicial relations and two operations:
\begin{equation}
\left\{
\begin{array}{lll}
M_{c}^{j}\times M_{c}^{l}\rightarrow M_{c}^{j+l}&; & (x;y)\mapsto \ast_{2\,;\,c}(x;y), \\ 
M_{c}^{j}\times M_{o}^{l}\rightarrow M_{o}^{j+l}&; & (x;y)\mapsto \ast_{2\,;\,o}(x;y).
\end{array} 
\right.
\end{equation}
The map $\eta:\tc \rightarrow M$ gives us the missing cofaces:
\begin{equation}
\left\{
\begin{array}{llll}
d^{0}&:M_{c}^{n}\rightarrow M_{c}^{n+1}&; & x\mapsto \ast_{2\,;\,c}( \eta(\ast_{1\,;\,c})\, ,\, x), \\ 
d^{n+1}&:M_{c}^{n}\rightarrow M_{c}^{n+1}&; & x\mapsto \ast_{2\,;\,c}( x \, ,\,\eta(\ast_{1\,;\,c})), \\ 
d^{0}&:M_{o}^{n}\rightarrow M_{o}^{n+1}&; & x\mapsto \ast_{2\,;\,o}(\eta(\ast_{1\,;\,c})\, ,\, x),
\end{array}
\right.
\end{equation}
inducing a  semi cosimplicial structure on $M_{c}$ and $M_{o}$ such that the two operations defined in $(3)$ make $M_{c}$ into a monoid with unit and $M_{o}$ into a $M_{c}$-left module. The map:
$$h:M_{c}^{n}\rightarrow M_{o}^{n};\,\, x\mapsto \ast_{2\,;\,o}(x\, , \, \eta(\ast_{1\,;\,o}))$$ 
is a left $M_{c}$-module map.

Conversely, let $(M_{c},M_{o},h)$ be a triple satisfying the conditions of the proposition. By using the same argument as in Proposition \ref{SCwbimodule}, the constructions $(3)$ and $(4)$ define an $\sc0$-bimodule structure on $M$. In particular, if  $M_{c}$ and $M_{o}$ coincide with the unit $e$, then the corresponding $\sc0$-bimodule is $\tc$. There exists a map $\eta_{c}$ from the unit to $M_{c}$, for  $M_{c}$ is a monoid with unit. Let $\eta_{o}$ be the map from the unit to $M_{o}$ given by $\eta_{o}=h\circ \eta_{c}$. The map $\eta:\tc \rightarrow M$ so obtained is an $\sc0$-bimodule map.
\end{proof}

This proposition implies that the category whose objects are monoids in $(Top^{\Delta_{inj}}\,,\,\boxtimes)$ is equivalent to the category of $As_{>0}$-bimodules under $As$ considered by Turchin. Furthermore if we substitute $\sc0$-bimodule by $\tc$-bimodule and semi-cosimplicial space by cosimplicial space, Proposition \ref{propSC} is still true. 

\begin{expl}\label{exemple}
Let $(X;\ast)$ be a pointed topological space and $A$ be a subspace of $X$ containing $\ast$. Let $\Omega X^{\bullet} $ and $\Omega(X;A)^{\bullet} $ be the two cosimplicial spaces defined respectively by: 
\begin{center}
$\Omega X^{n}:= X^{\times n}\,\,\,\,\,$ and $\,\,\,\,\,\Omega (X;A)^{n}:= X^{\times n}\times A\,\,\,\,\,$, for $n\in \mathbb{N}$ and 
\end{center}
$$
\begin{array}{llll}
d^{i}:\Omega X^{n}\rightarrow \Omega X^{n+1} & ; & (x_{1},\ldots,x_{n})&\mapsto
\left\{
\begin{array}{lll}
(\ast,x_{1},\ldots,x_{n}), & $ if $ &\,\,\,\,i=0, \\ 
(x_{1},\ldots,x_{i},x_{i},\ldots,x_{n}), & $ if $ &\,\,\,\,i\in\{1,\cdots,n\}, \\ 
(x_{1},\ldots,x_{n},\ast), & $ if $ &\,\,\,\,i=n+1,
\end{array} 
\right. \\ 
d^{i}:\Omega (X;A)^{n}\rightarrow \Omega (X;A)^{n+1} & ; & (x_{1},\ldots,x_{n},a)&\mapsto
\left\{
\begin{array}{lll}
(\ast,x_{1},\ldots,x_{n},a), & $ if $ &i=0, \\ 
(x_{1},\ldots,x_{i},x_{i},\ldots,x_{n},a), & $ if $ &i\in\{1,\cdots,n\}, \\ 
(x_{1},\ldots,x_{n},a,a), & $ if $ &i=n+1.
\end{array} 
\right.
\end{array} 
$$
The codegeneracies consist in forgetting a point and the concatenation makes $\Omega X^{\bullet} $ into a monoid with unit in $(Top^{\Delta_{inj}}\,,\,\boxtimes)$ and $\Omega(X;A)^{\bullet} $ into a left $\Omega X^{\bullet} $-module. The left $\Omega X^{\bullet}$-module map is 
defined by:
\begin{center}
$h:\Omega X^{n}\rightarrow \Omega(X;A)^{n};\,\, (x_{1},\ldots,x_{n})\mapsto (x_{1},\ldots,x_{n},\ast).$
\end{center}
Proposition \ref{propSC} states that these data are equivalent to an $\tc$-bimodule map.
The evaluation maps:
\begin{center}
$
\begin{array}{lll}
\Omega X\rightarrow Tot(\Omega X^{\bullet}) & ; & 
f\mapsto \left\{ f_{n}:(t_{1}\leq \cdots \leq t_{n}) \mapsto  \big( f(t_{1}),\ldots ,f(t_{n})\big)\right\}_{n} \\ 
\Omega(X\,;\,A)\rightarrow Tot(\Omega(X\,;\,A)^{\bullet}) & ; & f\mapsto \left\{ f_{n}:(t_{1}\leq \cdots \leq t_{n}) \mapsto  \big( f(t_{1}),\ldots ,f(t_{n}),f(1)\big)\right\}_{n}
\end{array} 
$
\end{center}
induce homeomorphisms. 
It provides an example of an $\tc$-bimodule map $\eta:\tc\rightarrow M$ such that the totalization of $M_{c}$  (resp. $M_{o}$) can be described as a loop space $\Omega X$ (respectively a relative loop space $\Omega(X;A)$) with explicit topological spaces $X$ and $A$. We will prove that we can generalize this result for any  $\sc0$-bimodule map $\eta:\tc \rightarrow M$ using the semi-totalization. 
\end{expl}

\section{The free (infinitesimal) bimodule generated by an $S$-sequence}

In what follows $S$ is a set, $O$ is an $S$-operad and $M$ is an $S$-sequence. In order to prove that $sTot(M_{o})$ has the homotopy type of a relative loop space and to identify explicitly this space we have to introduce a model category structure on the categories $Ibimod_{O}$ and $Bimod_{O}$. The easiest way is to use a transfer theorem (see e.g Theorem \ref{transfer}) which needs a left adjoint to the forgetful functor from the category of (infinitesimal) bimodules over $O$ to $Coll(S)$. In both cases, the first step consists in introducing the category of trees which encodes the (infinitesimal) bimodule structure. Then we label the vertices by points in $M$ or $O$. Similar constructions have been considered in \cite{BVlecturenote73} and more recently \cite{MR1996461}.\\{}\\ 
By a tree we mean a planar rooted tree with an orientation towards the root. Let $t$ be a tree:
\begin{itemize}
\item The set of its vertices is denoted by $V(t)$ and the set of its edges by $E(t)$. 

\item For a vertex $v$, the ordered set of its input edges is denoted by $in(v)$ and its cardinality by $|v|$ such that $in(v)=\{e_{1}(v),\ldots,e_{|v|}(v)\}$. The output edge of $v$ is denoted by $e_{0}(v)$.

\item The edges connecting two vertices are called \textit{inner edges} and the set of inner edges is denoted by $E^{int}(t)$.

\item An element $e\in E^{int}(t)$ is determined by a source vertex $s(e)$ and a target vertex $t(e)$ induced by the orientation of the tree. 

\item An edge with no source is called a \textit{leaf} and the ordered set of leaves is denoted by $\{l_{1},\ldots,l_{n}\}$. 

\item The edge with no target is called the \textit{trunk}, denoted by $e_{0}$, and its source, the \textit{root}, is denoted by $r$.

\item Each leaf is connected to the trunk by a unique path composed of  edges.

\item An \textit{$S$-tree} is a pair $(t,f)$ where $t$ is a planar tree and  $f:E(t)\rightarrow S$ is called an \textit{$S$-labelling} of $t$.
\end{itemize}

\begin{figure}[!h]
\begin{center}
\includegraphics[scale=0.3]{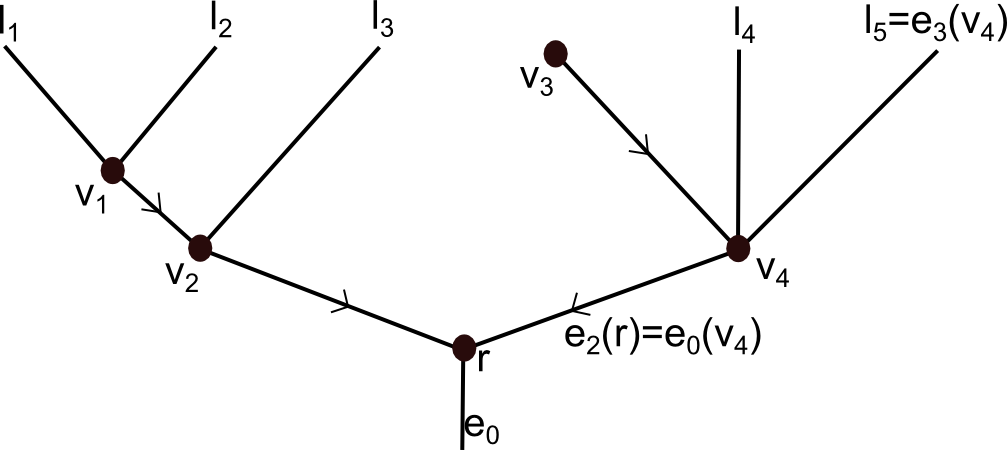}
\caption{A planar tree; $r$ is the root, $e_{0}$ is the trunk, $l_{1}$ is a leaf.}
\end{center}
\end{figure}

\subsection{The free infinitesimal bimodule}

\begin{defi}
The trees encoding the infinitesimal bimodule structure are constructed  as follows:
\begin{itemize}
\item The \textit{join} $j(v_{1}\,;\,v_{2})$ of two vertices $v_{1}$ and $v_{2}$ is the first common vertex shared by the two paths joining $v_{1}$ and $v_{2}$ to the root. If $j(v_{1}\,;\,v_{2})=r$, then $v_{1}$ and $v_{2}$ are said to be \textit{connected to the root} and if $j(v_{1}\,;\,v_{2})\in \{ v_{1};v_{2}\}$, then they are said to be \textit{connected}. In Figure $1$ the vertices
$v_{1}$ and $v_{2}$ are connected whereas the vertices $v_{1}$ and $v_{3}$ are connected to the root.
\item Let $\,d:V(T)\times V(T) \rightarrow \mathbb{N}$ be the distance defined as follows. The integer $d(v_{1}\,;\,v_{2})$ is the number of edges connecting  $v_{1}$ to $v_{2}$ if they are connected, otherwise $d(v_{1}\,;\,v_{2})=d(v_{1}\,;\,v_{3})+d(v_{3}\,;\,v_{2})$ with $v_{3}=j(v_{1}\,;\,v_{2})$. 
In Figure $1$, $d(v_{1}\,;\,r)=2\,\,$, $\,d(v_{3}\,;\,v_{4})=1$ and $d(v_{1}\,;\,v_{3})=4$.
\item A \textit{pearl tree} (or ptree) is a pair $(t,p)$ where $t$ is a planar tree and $p\in V(t)$ is called the \textit{pearl}, satisfying the property: $\forall v\in V(t)\setminus\{p\},\, d(v\,;\,p)=1$. An $S$-ptree is a pearl tree $t$ together with an $S$-labelling of $t$. 
\end{itemize}
\begin{figure}[h]
\begin{center}
\includegraphics[scale=0.3]{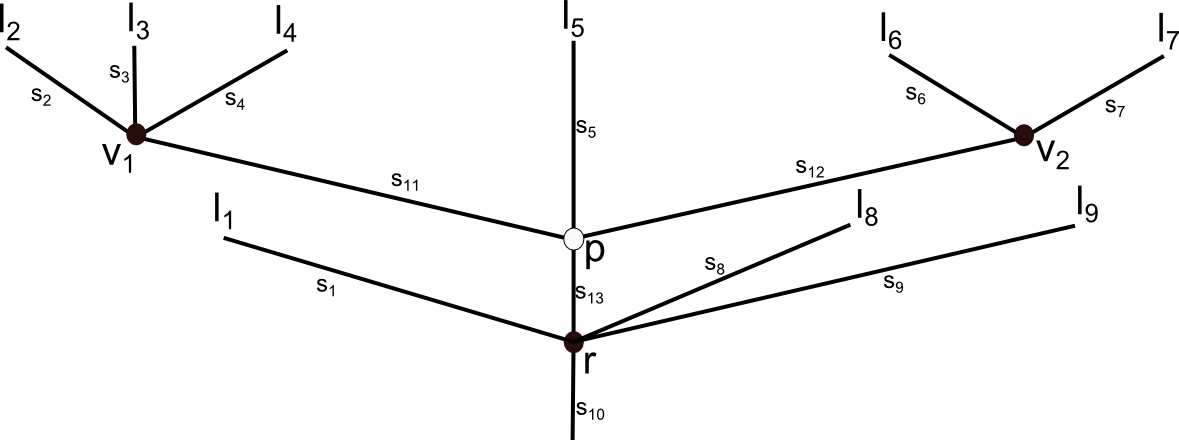}
\caption{An $S$-ptree.}
\end{center}
\end{figure}
\end{defi}

\begin{const}
The $S$-sequence $Ib_{O}(M)$ is defined by $Ib_{O}(M)(s_{1},\ldots,s_{n};s_{n+1})=$
$$
\left.\underset{\underset{f(l_{i})=s_{i}\, ,\,f(e_{0})=s_{n+1}}{(t,f,p)\in S-ptree}}{\coprod}
\left[ M\big(f(e_{1}(p)),\ldots,f(e_{|p|}(p));f(e_{0}(p))\big)\,\,\times 
\underset{v\in V(t)\setminus\{p\}}{\prod}
O\big(f(e_{1}(v)),\ldots,f(e_{|v|}(v));f(e_{0}(v))\big)\right]\right/ \sim
$$
where $\sim$ is the equivalence relation generated by
\begin{center}
\includegraphics[scale=0.4]{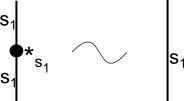}
\end{center}

Let $x$ be a point in the space $Ib_{O}(M)(s_{1},\ldots,s_{n};s_{n+1})$ indexed by an $S$-ptree $(t,f,p)$ and let $y\in O(s'_{1},\ldots, s'_{m};s_{i})$. The right infinitesimal module structure consists in grafting the $m$-corolla indexed by $y$ to the $i$-th input of $t$ and contracting the inner edge so obtained if its target does not coincide with the pearl, by using the operadic structure of $O$ as in Figure $3$:
\begin{center}
\begin{figure}[!h]
\begin{center}
\includegraphics[scale=0.41]{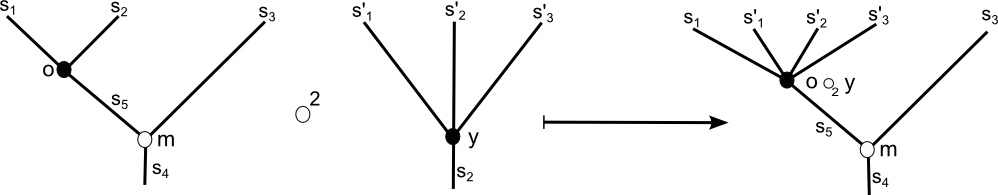}
\end{center}
\end{figure}
\end{center}

Similarly, let $x$ be a point in the space $Ib_{O}(M)(s'_{1},\ldots, s'_{m};s_{i})$ indexed by an $S$-ptree $(t,f,p)$ and let $y\in O(s_{1},\ldots,s_{n};s_{n+1})$. The left infinitesimal module structure consists in grafting the tree $t$ to the $i$-th input of the $n$-corolla indexed by $y$ and contracting the inner edge so obtained if its source does not coincide with the pearl, by using the operadic structure of $O$. These maps pass to the quotient and are continuous.

There exists an application from the $S$-sequence $M$ to  
$Ib_{O}(M)$ which maps a point $m\in M(s_{1},\ldots,s_{n};s_{n+1})$ to the pearl $n$-corolla whose leaves are labelled by $s_{1},\ldots,s_{n}$,
the trunk by $s_{n+1}$ and the pearl is indexed by $m$.\\ 
We denote by $(t,f,p,g)$ a point in $Ib_{O}(M)$ indexed by $(t,f,p)$ and labelled by $g:V(t)\rightarrow O\sqcup M$.
\end{const}

\begin{pro}\label{p1}
The functor $Ib_{O}$ is the left adjoint to the forgetful functor $$Ib_{O}(-): Coll(S)\leftrightarrows Ibimod_{O}:U.$$
\end{pro}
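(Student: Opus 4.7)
The plan is to produce, for every $S$-sequence $M$ and every $O$-infinitesimal bimodule $N$, a natural bijection
$$
\Phi_{M,N}: Ibimod_O(Ib_O(M),\, N) \longrightarrow Coll(S)(M,\, U(N)).
$$
For the unit of the adjunction I define $\iota_M: M \to U(Ib_O(M))$ by sending $m \in M(s_1,\ldots,s_n;s_{n+1})$ to the class of the pearl $n$-corolla whose pearl is labeled by $m$ and whose edges bear the labels $s_1,\ldots,s_n,s_{n+1}$. Then $\Phi_{M,N}(\phi) := U(\phi) \circ \iota_M$.

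For the inverse $\Psi_{M,N}$, given $f: M \to U(N)$, I construct $\Psi(f): Ib_O(M) \to N$ by evaluation on a representative $(t,f,p,g)$: first replace the pearl label $g(p) \in M$ by $f(g(p)) \in N$, then successively absorb each $O$-labeled vertex $v \neq p$ into $N$ using the right infinitesimal bimodule operation $\circ^i$ of $N$ when $v$ sits above the pearl, and the left infinitesimal bimodule operation $\circ_i$ when $v$ sits below. The associativity axioms of Definition \ref{bimodule} for $N$ imply that the result is independent of the order in which these contractions are carried out, so the construction descends to a well-defined map on equivalence classes provided it respects $\sim$.

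Three verifications remain: (i) $\Psi(f)$ descends to the quotient by $\sim$, which is exactly the unit axioms for $\circ_i$ and $\circ^i$ on $N$ applied to the two relations generating $\sim$; (ii) $\Psi(f)$ is a morphism of $O$-infinitesimal bimodules, which follows by induction on tree size using associativity in $N$ together with the observation that the grafting/contraction rules defining the infinitesimal bimodule structure on $Ib_O(M)$ are designed to mirror composition in $N$; (iii) naturality in $M$ and $N$, which is formal.

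Finally, $\Phi \circ \Psi = \mathrm{id}$ is immediate because $\Psi(f)$ applied to the pearl corolla $\iota_M(m)$ returns $f(m)$. For $\Psi \circ \Phi = \mathrm{id}$, I use that every class $[(t,f,p,g)]$ in $Ib_O(M)$ is obtained from the pearl corollas $\iota_M(g(p))$ by successive applications of the operations $\circ_i$ and $\circ^i$ with the vertices $v\neq p$; since an $O$-infinitesimal bimodule map out of $Ib_O(M)$ preserves these operations, it is determined by its restriction along $\iota_M$, giving $\Psi(\Phi(\phi)) = \phi$. The main obstacle is the bookkeeping in step (ii): keeping track of which contractions use $\circ_i$ versus $\circ^i$ as one moves past the pearl, and verifying path-independence on trees with many vertices on both sides. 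This is routine but requires care, as the infinitesimal bimodule axioms split into left-left, right-right, and mixed left-right associativity relations, each of which must be matched to the corresponding combinatorial reduction in $Ib_O(M)$.
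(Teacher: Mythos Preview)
Your proof is correct and follows essentially the same approach as the paper: the paper also constructs the extension $\tilde h$ of an $S$-sequence map $h:M\to N$ by induction on the number of vertices of the underlying pearl tree, applying $\circ_i$ for the vertex below the pearl and $\circ^i$ for vertices above it, and then invokes the infinitesimal bimodule axioms of $N$ to obtain well-definedness and uniqueness. The only cosmetic difference is that the paper phrases the argument as verifying the universal property (existence and uniqueness of $\tilde h$) rather than as an explicit hom-set bijection $\Phi\leftrightarrow\Psi$, and it fixes a particular contraction order in the induction (always peeling off a vertex above the pearl) instead of arguing path-independence for an arbitrary order as you do.
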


\begin{proof}
Given an $O$-infinitesimal bimodule $N$ and a map of $S$-sequences $h:M\rightarrow N$, we prove that there exists a unique map  
$\tilde{h}:Ib_{O}(M)\rightarrow N$ of $O$-infinitesimal bimodules such that the following diagram commutes:
$$\xymatrix{M \ar[r]^{h} \ar[d] & N \\
Ib_{O}(M) \ar[ru]_{!\, \tilde{h}} & 
}
$$
Let $(t,f,p,g)$ be a point in  $Ib_{O}(M)$. The map $\tilde{h}$ is defined by induction on $|V(t)|$ as follows. If $|V(t)|=1$, then the pearl $p$ is the only vertex and $t$ is a corolla. In this case we define $\tilde{h}((t,f,p,g))=h(g(p))$. Hence the commutativity of the previous diagram is guaranteed.\\
If $t$ has two vertices, then there exists a unique edge $e$ connecting the pearl $p$ to the other vertex $v$. There are two cases to consider:\\
- if $s(e)=p$ and $e$ is the $i$-th input of $v$ then we let $\tilde{h}((t,f,p,g))=g(v)\circ_{i} h(g(p))$.\\
- if $t(e)=p$  and $e$ is the $i$-th input of $p$ then we let  $\tilde{h}((t,f,p,g))=h(g(p)) \circ^{i} g(v)$.\\
Assume $\tilde{h}$ has been defined for $|V(t)|=n\geq 2$. Let $(t,f,p,g)\in Ib_{O}(M)$ such that $t$ has $n+1$ vertices. There exists an inner edge $e$ connecting the pearl $p$ to another vertex $v$ such that $t(e)=p$.
Let $(t',f',p,g')$ be the tree obtained by cutting off the corolla corresponding to the vertex $v$ ( $t'$ has only $n$ vertices ).  We define:  
$$
\tilde{h}((t,f,p,g))=\tilde{h}((t',f',p,g'))\circ^{i}g(v)
.$$
Due to the associativity axioms of the infinitesimal bimodule structure of $N$, $\tilde{h}$ does not depend on the choice of $v$ and $\tilde{h}$ is an infinitesimal bimodule map. The uniqueness follows from the construction.
\end{proof}

\subsection{The free bimodule}

\begin{defi}
A \textit{tree with section} (or stree) is a pair $(t,V^
{p}(t))$ where $t$ is a planar tree and $V^{p}(t)$ is a subset of $V(t)$, called the set of pearls, such that each path connecting a leaf to the trunk passes by a unique pearl and 
\begin{center}$\forall v\in V(t)\setminus V^{p}(t),\, \forall p\in V^{p}(t),\,\, j(v\,;\,p)\in \{v\,;\,p\}\Rightarrow d(v\,;\,p)=1$.\end{center} An \textit{$S$-tree} with section (or \textit{$S$-stree}) is given by a triple  $(t,V^{p}(t),f)$ such that $(t,f)$ is an $S$-tree and $(t,V^{p}(t))$ is a tree with section.

\begin{figure}[!h]
\begin{center}
\includegraphics[scale=0.42]{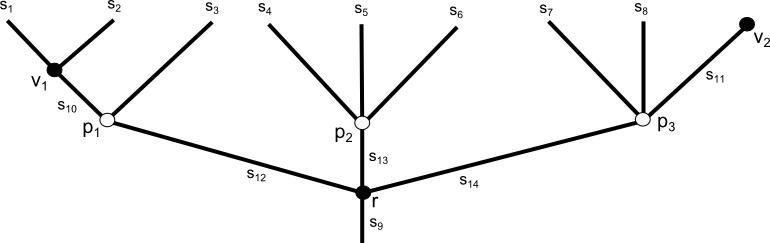}
\caption{A tree with section.}
\end{center}
\end{figure}
\end{defi}

\begin{const}
The $S$-sequence $B_{O}(M)$ is defined by $B_{O}(M)(s_{1},\ldots,s_{n};s_{n+1})=$
$$
\left.\underset{\underset{f(l_{i})=s_{i}\,;\,f(e_{0})=s_{n+1}}{(t,f,V^{p}(t))\in S-stree}}{\coprod} \left[
\underset{v\in V^{p}(t)}{\prod} M\big(f(e_{1}(v)),\ldots,f(e_{|v|}(v)); f(e_{0}(v))\big)\,\,\,
\times 
\underset{v\in V(t)\setminus V^{p}(t)}{\prod}
O\big(f(e_{1}(v)),\ldots,f(e_{|v|}(v)); f(e_{0}(v))\big)\right]\right/ \sim
$$
with $\sim$ the equivalence relation generated by
\begin{center}
\includegraphics[scale=0.4]{axiomeunite}
\end{center}

Let $x \in B_{O}(M)(s_{1},\ldots,s_{n};s_{n+1})$ indexed by a tree with section $(t,f,V^{p}(t))$ and let $y\in O(s_{1},\ldots,s_{n};s_{i})$. The right module structure consists in grafting the $m$-corolla indexed by $y$ to the $i$-th input of $t$ and contracting the inner edge so obtained if its target does not coincide with a pearl, by using the operadic structure of $O$.

Let $y$ be a point in $O(s_{1},\ldots,s_{n};s_{n+1})$ and let $\{x_{i}\}_{i=1}^{n}$ with $x_{i}\in B_{O}(M)(s^{i}_{1},\ldots,s^{i}_{n_{i}};s_{i})$ indexed by $(t_{i},f_{i},V_{i}^{p}(t))$. The left module structure consists in grafting each tree $t_{i}$ to the $i$-th input of the $n$-corolla indexed by $y$ and contracting the inner edges whose source is not a pearl by using the operadic structure of $O$, as in Figure $5$:
\begin{center}
\begin{figure}[h]
\begin{center}
\includegraphics[scale=0.41]{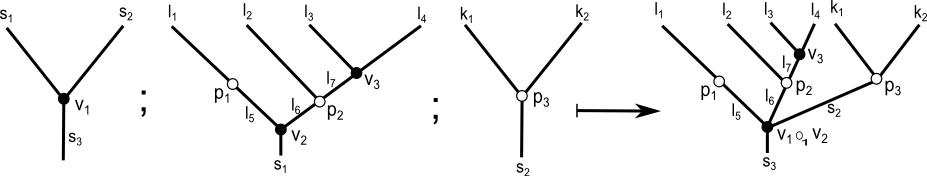}
\caption{The left module structure.}
\end{center}
\end{figure}
\end{center}

These maps pass to the quotient and are continuous. Furthermore, there exists an application from the $S$-sequence $M$ to $B_{O}(M)$ which maps a point $m\in M(s_{1},\ldots,s_{n};s_{n+1})$ to the pearl $n$-corolla whose leaves are labelled by $s_{1},\ldots,s_{n}$, the trunk by $s_{n+1}$ and the pearl is indexed by $m$.
We denote by $(t,f,V^{p}(t),g)$ a point in $B_{O}(M)$ indexed by $(t,f,V^{p}(t))$ and labelled by $g:V(t)\rightarrow O\sqcup M$.
\end{const}

\begin{pro}\label{p2}
The functor $B_{O}$ is the left adjoint to the forgetful functor
$$B_{O}(-): Coll(S)\leftrightarrows Bimod_{O}:U.$$
\end{pro}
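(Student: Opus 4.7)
The plan is to mirror the proof of Proposition \ref{p1}, with the additional complication that strees are richer than ptrees: several pearls can coexist, separated by a ``section'', and they get combined by the full left action $\gamma_l$ of the bimodule rather than a single infinitesimal composition. The first step is therefore to understand the combinatorics of an $S$-stree $(t,f,V^p(t))$. The two defining conditions — each leaf-to-trunk path contains a unique pearl, and $d(v,p)=1$ whenever the non-pearl $v$ and the pearl $p$ are connected — together force the following normal form: every non-pearl vertex is either (i) a child of some pearl, with all its own inputs being leaves of $t$ (an \emph{upper} vertex), or (ii) the root of $t$, with its vertex-children being exactly the pearls of $t$ (the unique possible \emph{lower} vertex). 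Hence $t$ decomposes as an optional lower vertex at the root, a section of pearls, and a collection of upper corollas attached to the inputs of those pearls.

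Given an $O$-bimodule $N$ and an $S$-sequence map $h : M \rightarrow N$, I would define $\tilde{h}: B_O(M) \rightarrow N$ by induction on $|V(t)|$ for a point $(t,f,V^p(t),g) \in B_O(M)$. The base case $|V(t)|=1$ forces the unique vertex to be a pearl $p$, and we put $\tilde{h}((t,f,V^p(t),g)) = h(g(p))$. In the inductive step, if there exists an upper vertex $v$ attached to the $i$-th input of some pearl, let $t'$ be the stree obtained from $t$ by deleting $v$ and replacing its output edge by a bare leaf, and set
$$
\tilde{h}((t,f,V^p(t),g)) \,=\, \tilde{h}((t',f',V^p(t),g')) \circ^{j} g(v),
$$
where $j$ is the global index of the new leaf in $t'$ and $\circ^{j}$ is the right action on $N$. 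If no upper vertex exists and $t$ is not a pearl corolla, then the root is a lower vertex $v$ with pearl children $p_1, \ldots, p_n$; letting $t_k$ be the sub-stree consisting of $p_k$ together with its upper vertices, we set
$$
\tilde{h}((t,f,V^p(t),g)) \,=\, \gamma_l\bigl(g(v); \tilde{h}(t_1,\ldots), \ldots, \tilde{h}(t_n,\ldots)\bigr),
$$
using the left action on $N$.

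The remaining tasks are to check that $\tilde{h}$ is well-defined — independent of which upper vertex is peeled off first, and compatible with the unit equivalence relation defining $B_O(M)$ — and that it is a morphism of $O$-bimodules. Both reduce to the associativity and unit axioms satisfied by $N$ as an $O$-bimodule, together with the compatibility of $\gamma_l$ with $\circ^i$. Uniqueness is then immediate: any bimodule map extending $h$ must coincide with $\tilde{h}$ on the pearl corollas, and since every point of $B_O(M)$ is obtained from pearl corollas by iterating $\gamma_l$ and $\circ^i$, the two inductive formulas propagate this constraint everywhere. As in Proposition \ref{p1}, the real obstacle is the well-definedness verification; the richer combinatorics of strees adds bookkeeping (ordering the removal of upper vertices, and the simultaneous use of $\gamma_l$ at the lower vertex) but introduces no new conceptual difficulty.
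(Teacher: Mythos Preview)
Your proof is correct and follows essentially the same approach as the paper's: define $\tilde{h}$ by peeling off non-pearl vertices above the section via $\circ^{i}$ and handling the unique possible non-pearl vertex below the section via $\gamma_{l}$, then invoke the bimodule axioms of $N$ for well-definedness and uniqueness. The only cosmetic difference is that the paper inducts on $nb(t) = |V(t)\setminus V^{p}(t)|$ rather than on $|V(t)|$, and treats the lower-vertex case explicitly at $nb(t)=1$ rather than as a branch of the general inductive step; your upfront normal-form description of strees is actually more explicit than what the paper writes.
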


\begin{proof}
Given an $O$-bimodule $N$ and $h:M\rightarrow N$ a map of $S$-sequences, we prove that there exists a unique map $\tilde{h}:B_{O}(M)\rightarrow N$ of $O$-bimodules such that the following diagram commutes:
$$
\xymatrix{
M \ar[r]^{h} \ar[d] & N \\
B_{O}(M) \ar[ru]_{!\, \tilde{h}} & 
}
$$
Let $(t,V^{p}(t),f,g)$ be a point in $B_{O}(M)$ and let $nb(t)$ be the cardinality of the set $V(t)\setminus V^{p}(t)$. The map $\tilde{h}$ is defined by induction on $nb(t)$. If $nb(t)=0$, then the pearl $p$ is the only vertex and $t$ is a corolla. In this case $\tilde{h}((t,V^{p}(t),f,g))=h(g(p))$.\\
If $nb(t)=1$, we denote by $v$ the unique element of $V(t)\setminus V^{p}(t)$. There are two cases to consider:\\
- if $v$ is the source of an edge $e$ which is connected to a pearl and $e$ is the $i$-th input of the unique pearl $p$, then  $$\tilde{h}((t,V^{p}(t),f,g))=h(g(p))\circ^{i}g(v).$$
- if $v$ coincides with the root, then all the pearls are connected to $v$. Let  $p_{1},\ldots,p_{k}$ be the set of ordered pearls. We define $\tilde{h}$ by 
$$\tilde{h}((t,V^{p}(t),f,g))=g(v)(h(g(p_{1})),\ldots,h(g(p_{k}))).$$
Assume $\tilde{h}$ has been defined for $nb(t)=n\geq 1$. Let $(t,V^{p}(t),f,g)\in B_{O}(M)$ such that $nb(t)=n+1$. There exists an inner edge $e$ whose target is a pearl $p_{i}$. Let $v=s(e)$ and let $(t',V^{p}(t),f',g')$ be the tree obtained from  $(t,V^{p}(t),f,g)$ by cutting off the corolla corresponding to the vertex $v$. Consequently $nb(t')=n$ and $\tilde{h}$ can be defined by induction as  
 $$\tilde{h}((t,V^{p}(t),f,g))=\tilde{h}((t',V^{p}(t),f',g'))\circ^{i} g(v).$$
Due to the associativity axioms of the bimodule structure of $N$, $\tilde{h}$ does not depend on the choice of $v$ and $\tilde{h}$ is a map of $O$-bimodules. The uniqueness follows from the construction.
\end{proof}

\section{Cofibrant replacement of the operad of monoid actions in the category of (infinitesimal) bimodules over $\sc0$}

\subsection{Model category structure on $Bimod_{O}$ and $Ibimod_{O}$}

In this section we define a model category structure on $Bimod_{O}$ and $Ibimod_{O}$ by using the previous adjunctions.  The references used for model categories are \cite{MR1361887},\cite{MR1944041}  and \cite{MR1650134}. These structures have been considered by many authors in the context of operads (symmetric, non-symmetric), algebras over operad, left-right modules over operads, most of them in the uncoloured case, see for instance Fresse \cite{MR2494775}, Berger-Moerdijk \cite{BergerMoerdijk} and Harper \cite{MR2593672}. In order to be precise, we prefer to give in details
the model category structure in our context, and take benefit of this section to state lemmas that will be useful for the sequel. 

\vspace{0.2cm}

\begin{thm}{\cite[Theorem 2.4.24]{MR1650134}}
The category  $Top$ is equipped with the following model category structure:
\begin{itemize}
\item[] \textbf{Weak equivalences} are the continuous maps $f:X\rightarrow Y$ such that $f_{0}^{\ast}:\pi_{0}(X)\rightarrow \pi_{0}(Y)$ is a bijection and $f_{n}^{\ast}:\pi_{n}(X;x)\rightarrow \pi_{n}(Y;f(x))$ is an isomorphism, $\forall x\in X$ and $\forall n>0$.

\item[] \textbf{Serre fibrations} are the continuous maps $f:X\rightarrow Y$ having the homotopy lifting property i.e., for every $CW$-complex $A$ a lift exists in every commutative diagram of the form
$$
\xymatrix{
A\times \{0\} \ar[r] \ar[d] & X \ar[d] \\
A\times [0\,,\,1] \ar[r] \ar@{-->}[ru]|\exists & Y
}
$$

\item[] \textbf{Cofibrations} are the continuous maps having the left lifting property with respect to the acyclic Serre fibrations. 
\end{itemize}
Moreover this model category is cofibrantly generated.
The cofibrations are generated by the inclusions $\partial \Delta^{n}\rightarrow \Delta^{n}$ for $n>0$, whereas the acyclic cofibrations are generated by the inclusions of the horns $\Lambda^{n}_{k}\rightarrow \Delta^{n}$ for $n>0$ and $n\geq k \geq 0$.
We call this model category the Serre model category.
\end{thm}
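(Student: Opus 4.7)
Since this is a classical result (Quillen's original model structure on topological spaces, in its cofibrantly generated formulation due to Hirschhorn and Hovey), the plan is to invoke the recognition theorem for cofibrantly generated model categories rather than to verify the axioms by hand. Set $I=\{\partial\Delta^{n}\hookrightarrow\Delta^{n}\}_{n\geq 0}$ and $J=\{\Lambda^{n}_{k}\hookrightarrow\Delta^{n}\}_{n>0,\,0\leq k\leq n}$, and let $\mathcal{W}$ be the class of weak homotopy equivalences as defined in the statement. By definition the $J$-injective maps are the Serre fibrations and the $I$-injective maps will be the acyclic Serre fibrations, so it suffices to produce a model structure whose generating (acyclic) cofibrations are $I$ (resp.\ $J$).

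First I would verify smallness: the domains $\partial\Delta^{n}$ and $\Lambda^{n}_{k}$ are compact, so in the category $Top$ of compactly generated Hausdorff spaces they are small relative to the subcategory of closed $T_{1}$-inclusions, which contains all relative $I$- and $J$-cell complexes. This legitimises the small object argument and produces the required functorial factorisations. Second, I would check that $\mathcal{W}$ satisfies the two-out-of-three property and is closed under retracts; this is standard from the long exact sequence of homotopy groups. Third, I would show that every relative $J$-cell complex lies in $\mathcal{W}$: each generator $\Lambda^{n}_{k}\hookrightarrow\Delta^{n}$ is a strong deformation retract, hence so is every pushout of such a map, and transfinite composites of such inclusions remain weak equivalences because each stage is a closed cofibration onto a deformation retract.

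The main obstacle, and the only non-formal part, is to establish the equality
$$I\text{-inj} \;=\; J\text{-inj}\cap\mathcal{W},$$
i.e.\ to identify acyclic Serre fibrations with maps having the right lifting property against all boundary inclusions. One direction is elementary: a map with RLP against $I$ is evidently a Serre fibration (since horn inclusions are retracts of boundary inclusions, up to homotopy) and lifting against $S^{n-1}\hookrightarrow D^{n}$ implies surjectivity on $\pi_{*}$ and injectivity on relative homotopy, hence it is a weak equivalence. The converse is the hard direction: given an acyclic Serre fibration $p:X\to Y$ and a square with bottom edge $\partial\Delta^{n}\to\Delta^{n}$, one constructs the lift by obstruction theory — the obstruction to extending over the top cell lies in $\pi_{n-1}$ of the homotopy fibre of $p$, which vanishes because $p$ is a weak equivalence and a Serre fibration, so the homotopy fibre is weakly contractible.

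Having these four ingredients, Kan's recognition theorem (Hovey, Theorem 2.1.19) delivers the cofibrantly generated model structure with weak equivalences $\mathcal{W}$, fibrations $J$-inj, and cofibrations $I$-cof. The description of cofibrations in the statement as the maps with the left lifting property against acyclic Serre fibrations is then a tautology from the characterisation $I$-cof $=$ $\mathrm{llp}(I\text{-inj})$ combined with the identification above. Since the author only needs the existence and cofibrant generation of this structure, the cleanest presentation is simply to cite \cite{MR1650134} and move on.
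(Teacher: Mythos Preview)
The paper does not prove this theorem at all; it is stated with a citation to Hovey \cite[Theorem 2.4.24]{MR1650134} and used as background. Your sketch is a correct outline of the standard proof via the recognition theorem, and you rightly conclude that a citation suffices, which is exactly what the paper does.
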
 

\vspace{0.2cm}

\begin{cor}
The category $Coll(S)$ inherits a cofibrantly generated model category structure from the Serre model category in which a map is a cofibration, a fibration or a weak equivalence if each of its components is.
\end{cor}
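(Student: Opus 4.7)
The plan is to realise $Coll(S)$ as a product of copies of the Serre model category and invoke the standard fact that a product of (cofibrantly generated) model categories inherits a componentwise model structure. First I would observe that an $S$-sequence is precisely a functor from the discrete category $\mathcal{I} := \bigsqcup_{n\geq 0} S^{n+1}$ to $Top$, so that
\[
Coll(S) \;\cong\; \prod_{i\in\mathcal{I}} Top,
\]
and morphisms of $S$-sequences correspond to componentwise families of continuous maps.

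Next I would verify the model category axioms index by index. The 2-out-of-3 axiom, the retract axiom, and the closure of the three distinguished classes under composition all hold in $Coll(S)$ precisely because they hold in $Top$ at each $i \in \mathcal{I}$. A lifting square in $Coll(S)$ is a compatible family of independent lifting squares in $Top$, so the two lifting axioms reduce to the corresponding axioms for the Serre structure. The functorial factorization of a morphism $f : O_1 \to O_2$ as a (trivial) cofibration followed by a (trivial) fibration is obtained by applying the Serre factorization to each component $f_{s_1,\ldots,s_n;s_{n+1}}$ separately; this is well-defined because factorizations in $Top$ are functorial.

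For cofibrant generation, I would take as generating cofibrations of $Coll(S)$ the set $I$ of morphisms of $S$-sequences supported at a single index $i = (s_1,\ldots,s_n;s_{n+1})$, whose value at $i$ is an inclusion $\partial\Delta^n \hookrightarrow \Delta^n$ and whose value at every other index is the identity of $\emptyset$; and analogously take as generating acyclic cofibrations a set $J$ built from the horns $\Lambda_k^n \hookrightarrow \Delta^n$. A morphism in $Coll(S)$ has the right lifting property against every element of $I$ (resp.\ $J$) if and only if each of its components is a trivial Serre fibration (resp.\ a Serre fibration), which matches the proposed definition of fibration and weak equivalence. The small object argument goes through because transfinite compositions in $Coll(S)$ are computed componentwise, so the smallness of $\partial\Delta^n$ and $\Lambda_k^n$ in $Top$ transfers to the smallness of the corresponding $S$-sequences in $Coll(S)$.

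I expect no substantial obstacle, as the whole statement is a formal consequence of cofibrant generation being preserved under set-indexed products. The only point that merits explicit verification is that the set $J$ actually detects componentwise fibrations, but this is immediate from the fact that a lifting problem against a morphism supported at a single index $i$ is equivalent to a lifting problem in $Top$ for the $i$-th component.
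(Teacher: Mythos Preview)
Your argument is correct. The paper gives no proof of this corollary at all --- it is stated immediately after the description of the Serre model structure on $Top$ and is treated as evident. Your proposal supplies precisely the standard justification one would expect: identify $Coll(S)$ with the product $\prod_{i\in\mathcal{I}} Top$ over the discrete index set $\mathcal{I}=\bigsqcup_{n\geq 0} S^{n+1}$, check the axioms componentwise, and exhibit generating (acyclic) cofibrations as the Serre generators concentrated at a single index. Nothing is missing; this is essentially the unique proof of such a statement, and it matches what the paper implicitly relies on.
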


\vspace{0.2cm}

\begin{lmm}{\cite{MR1944041}}\label{lemme1}
Let $A\hookrightarrow B$ be a cofibration in the Serre model category. For every space $Y$ the induced map $Top(B;Y)\rightarrow Top(A;Y)$ is a fibration.
\end{lmm}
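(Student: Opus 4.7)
The plan is to verify the right lifting property against a set of generating acyclic cofibrations, namely the horn inclusions $\Lambda^n_k \hookrightarrow \Delta^n$ for $n > 0$ and $0 \leq k \leq n$. So I would start from an arbitrary commutative square
$$
\xymatrix{
\Lambda^n_k \ar[r] \ar[d] & Top(B;Y) \ar[d] \\
\Delta^n \ar[r] & Top(A;Y)
}
$$
and aim to produce a diagonal lift $\Delta^n \to Top(B;Y)$.

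The next step is to transpose everything across the adjunction $Top(X;Top(Y;Z)) \cong Top(X \times Y; Z)$ stated in the Convention. Under this adjunction the square above is equivalent to the datum of a map from the pushout
$$
P \;:=\; (\Lambda^n_k \times B) \cup_{\Lambda^n_k \times A} (\Delta^n \times A) \;\longrightarrow\; Y,
$$
and finding the lift amounts to extending this map to $\Delta^n \times B$. So the problem reduces to showing that every map $P \to Y$ extends along the canonical inclusion $P \hookrightarrow \Delta^n \times B$.

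Here I would invoke the pushout-product axiom for the Serre model structure on $Top$: the pushout-product of a cofibration $A \hookrightarrow B$ with an acyclic cofibration $\Lambda^n_k \hookrightarrow \Delta^n$ is itself an acyclic cofibration. Hence $P \hookrightarrow \Delta^n \times B$ is an acyclic cofibration. Finally, in the Serre model structure every object is fibrant, so $Y \to \ast$ is a Serre fibration; by the defining lifting property of acyclic cofibrations against fibrations, the required extension $\Delta^n \times B \to Y$ exists.

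The main (and essentially only non-formal) ingredient is the pushout-product axiom for $(Top,\times)$, which is standard and may be cited directly. Everything else is a purely formal manipulation of adjunctions and lifting properties, so I expect no genuine obstacle beyond carefully setting up the transposition.
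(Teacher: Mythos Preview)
Your argument is correct and is the standard proof of this fact: reduce to the generating acyclic cofibrations, transpose via the product--hom adjunction, and invoke the pushout-product property together with the fibrancy of every object. Note, however, that the paper does not give its own proof of this lemma; it is simply stated with a citation to \cite{MR1944041}, so there is nothing to compare against beyond observing that your proposal supplies the details the paper omits.
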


\vspace{0.2cm}

\begin{thm}{\cite[section 2.5]{BergerMoerdijk}}\label{transfer} 
Let $\mathcal{C}_{1}$ be a cofibrantly generated model category and let $I$ (resp. $J$) be the set of generating cofibrations (resp. acyclic cofibrations). Let 
$L:\mathcal{C}_{1}\leftrightarrows \mathcal{C}_{2}:R$ be a pair of adjoint functors. Assume that $\mathcal{C}_{2}$ has small colimits and finite limits. Define a map $f$ in $\mathcal{C}_{2}$ to be a weak equivalence (resp. a fibration) if $R(f)$ is a weak equivalence (resp. fibration). If the following three conditions are satisfied:
\begin{itemize}
\item[$i)$] the functor $R$ preserves filtered colimits,
\item[$ii)$] $\mathcal{C}_{2}$ has a functorial fibrant replacement,
\item[$iii)$] for each fibrant objects $X\in \mathcal{C}_{2}$ we have a functorial path object $Path(X)$ with $X \overset{\simeq}{\rightarrow} Path(X) \twoheadrightarrow X\times X$ (a weak equivalence followed by a fibration) a factorization of the diagonal map,
\end{itemize} 
then $\mathcal{C}_{2}$ is equipped with a cofibrantly generated model category $(LI,LJ)$ with $LI=\{L(u)\, | \, u\in I\}$ and $LJ=\{L(v)\, | \, v\in J\}$. Furthermore $(L,R)$ is a Quillen pair.  
\end{thm}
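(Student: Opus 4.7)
The plan is to apply the Crans--Kan recognition theorem for cofibrantly generated model structures. Three families of axioms must be verified in $\mathcal{C}_{2}$: the $2$-out-of-$3$ and retract axioms, the two factorization axioms, and the two lifting axioms. The first family is automatic: since weak equivalences and fibrations in $\mathcal{C}_{2}$ are defined as the $R$-preimages of the corresponding classes of $\mathcal{C}_{1}$, and $R$ preserves retracts and composition, these classes inherit their closure properties directly from the model structure on $\mathcal{C}_{1}$. Cofibrations are defined by the left lifting property against acyclic fibrations and automatically form a saturated class.

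First I would establish smallness: for any $u : K \to L$ in $I \cup J$, the object $L(K)$ is small in $\mathcal{C}_{2}$ relative to the relevant cell classes. Indeed, for a sufficiently filtered diagram $\{X_{\alpha}\}$, hypothesis (i) and the adjunction give
\[
\mathcal{C}_{2}\bigl(L(K),\,\mathrm{colim}_{\alpha} X_{\alpha}\bigr) \,\cong\, \mathcal{C}_{1}\bigl(K,\,\mathrm{colim}_{\alpha} R(X_{\alpha})\bigr) \,\cong\, \mathrm{colim}_{\alpha}\,\mathcal{C}_{2}\bigl(L(K),\,X_{\alpha}\bigr),
\]
using smallness of $K$ in $\mathcal{C}_{1}$. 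Quillen's small object argument then produces functorial factorizations of any $\mathcal{C}_{2}$-map $f$ as an $LI$-cell complex followed by a map with RLP against $LI$, and as an $LJ$-cell complex followed by a map with RLP against $LJ$. Since $g$ has RLP against $L(u)$ in $\mathcal{C}_{2}$ iff $R(g)$ has RLP against $u$ in $\mathcal{C}_{1}$ (standard transpose of lifting problems across an adjunction), the RLP-against-$LI$ (resp. $LJ$) maps are precisely the acyclic fibrations (resp. fibrations) of $\mathcal{C}_{2}$ under our definitions, so both factorization axioms will hold provided every $LJ$-cell complex is a weak equivalence.

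The heart of the argument is therefore to show that every $LJ$-cell complex $u : A \to B$ has $R(u)$ a weak equivalence in $\mathcal{C}_{1}$; once that is done, the retract argument delivers both lifting axioms in the standard way. This is Quillen's path-object argument, and it is the unique step where (ii) and (iii) enter. Apply (ii) functorially to $u$ to obtain a naturality square
\[
\begin{array}{ccc}
A & \xrightarrow{\eta_{A}} & \bar{A} \\
\downarrow u & & \downarrow \bar{u} \\
B & \xrightarrow{\eta_{B}} & \bar{B}
\end{array}
\]
with $\bar{A}, \bar{B}$ fibrant and $\eta_{A}, \eta_{B}$ weak equivalences. Since $u$ is an $LJ$-cell complex it has LLP against every fibration, so lifting $\eta_{A}$ through $u$ against $\bar{A}\to\ast$ yields $s : B \to \bar{A}$ with $s\,u = \eta_{A}$. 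Using (iii), form the path object $\bar{B}\xrightarrow{w}\mathrm{Path}(\bar{B})\xrightarrow{(p_{0},p_{1})}\bar{B}\times\bar{B}$. Because $\bar{u}\,s\,u = \bar{u}\,\eta_{A} = \eta_{B}\,u$, the pair $(\bar{u}\,s,\,\eta_{B}) : B \to \bar{B}\times\bar{B}$ agrees on $A$ with $(p_{0},p_{1})\circ w \circ \eta_{B} u$, and lifting against the fibration $(p_{0},p_{1})$ produces a right homotopy $H : B \to \mathrm{Path}(\bar{B})$ from $\bar{u}\,s$ to $\eta_{B}$.

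Applying $R$, which preserves products, fibrations, and weak equivalences by construction, one checks that $R(\mathrm{Path}(\bar{B}))$ is a genuine path object for the fibrant object $R(\bar{B})$ in $\mathcal{C}_{1}$, and that $R(H)$ is a right homotopy from $R(\bar{u})R(s)$ to $R(\eta_{B})$. Since $R(\eta_{B})$ is a weak equivalence and $R(\bar{B})$ is fibrant, homotopy invariance in $\mathcal{C}_{1}$ forces $R(\bar{u})R(s)$ to be a weak equivalence as well; together with the identity $R(s)R(u) = R(\eta_{A})$, the class $[R(s)]$ in $\mathrm{Ho}(\mathcal{C}_{1})$ admits both a right inverse $[R(u)]\,[R(\eta_{A})]^{-1}$ and a left inverse $[R(\eta_{B})]^{-1}\,[R(\bar{u})]$, hence is invertible, forcing $R(u)$, $R(s)$, and $R(\bar{u})$ all to be weak equivalences. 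This closes the transfer; the Quillen pair assertion is then immediate, as $L$ sends $I$ and $J$ to the generating (acyclic) cofibrations of $\mathcal{C}_{2}$ by construction. The main obstacle is precisely the path-object step: one must verify that $R$ transports $\mathrm{Path}(\bar{B})$ and the homotopy $H$ faithfully to $\mathcal{C}_{1}$, and then extract the invertibility of $[R(s)]$ from the two commutativity identities. Both tasks are forced once (ii) and (iii) hold, but they rely crucially on the homotopy calculus of the base model structure on an arbitrary cofibrantly generated $\mathcal{C}_{1}$.
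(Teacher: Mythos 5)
This theorem is quoted in the paper from Berger--Moerdijk (section 2.5) without proof, so there is no in-paper argument to compare against; your write-up is essentially the standard proof from that reference, namely the small-object argument for the factorizations plus Quillen's path-object argument to show that relative $LJ$-cell complexes are weak equivalences, and it is correct. Two small points worth making explicit: the final step ``$[R(u)]$ invertible in $\mathrm{Ho}(\mathcal{C}_{1})$ implies $R(u)$ is a weak equivalence'' uses the (standard but nontrivial) saturation of model categories, which you could avoid by phrasing the conclusion via the two-out-of-six property; and your use of the weak equivalences $\eta_{A},\eta_{B}$ presupposes that ``functorial fibrant replacement'' includes the natural transformation $\mathrm{id}\to\overline{(\,\cdot\,)}$ being a levelwise weak equivalence, which is the intended reading of hypothesis $ii)$ but should be stated, since at that stage of the proof ``weak equivalence'' in $\mathcal{C}_{2}$ only means a map whose image under $R$ is one.
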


\begin{app}
For the adjunction $Ib_{O}:Coll(S)\rightleftarrows Ibimod_{O}:U$. The identity induces a functorial fibrant replacement since all the objects of $Coll(S)$ are fibrants. From $M$ an $O$-infinitesimal bimodule, a functorial path object $Path(M)$ is given by the following $S$-sequence:
$$
Path(M)\big(s_{1},\ldots,s_{n};s_{n+1}\big)= Top \big( [0\,,\,1];M(s_{1},\ldots,s_{n};s_{n+1}) \big)
.$$
The $O$-infinitesimal bimodule structure and the functoriality  of $Path(-)$ are induced by that of $M$. The factorization of the diagonal map is given pointwise 
$$
\xymatrix{
M\ar[r]^{f_{1}} & Path(M) \ar[r]^{f_{2}} & M\times M
}.
$$

The application $f_{1}$ maps a point $m\in M(s_{1},\ldots,s_{n};s_{n+1})$ to the constant path in $m$. Due to the the homotopy between a path $h$ and the constant path in $h(0)$, the application $f_{1}$ is a weak equivalence.
The application $f_{2}$ maps a point $h\in Path(M)(s_{1},\ldots,s_{n};s_{n+1})$ to the pair $(h(0);h(1))\in (M\times M)(s_{1},\ldots,s_{n};s_{n+1})$. This application is a fibration since $Path(M)(s_{1},\ldots,s_{n};s_{n+1})$ is a path object in the Serre model category.

Similarly the adjunction $B_{O}(-): Coll(S)\leftrightarrows Bimod_{O}:U$ induces a cofibrantly generated model category on $Bimod_{O}$.
\end{app}

\begin{defi}
The $O$-infinitesimal bimodule $M$ is \textit{obtained from the $O$-infinitesimal bimodule $N$ by attaching cells} if $M$ is obtained as a pushout diagram of the form 
\begin{equation}
\xymatrix{
Ib_{O}(A) \ar@{^{(}->}[r]_{Ib_{O}(i)} \ar[d]_{\tilde{f}} & Ib_{O}(B) \ar[d] \\
N \ar[r] & M
}
\end{equation}
with $i$ a cofibration in $Coll(S)$, $f:A\rightarrow N$ an $S$-sequence map called the \textit{attaching map} and $\tilde{f}$ the $O$-infinitesimal bimodule map induced by $f$ (see Proposition \ref{p1}).

Similarly, an $O$-bimodule $M$ is  obtained from an $O$-bimodule $N$ by attaching cells if $M$ is obtained as a pushout diagram of the form
\begin{equation}
\xymatrix{
B_{O}(A) \ar@{^{(}->}[r]_{B_{O}(i)} \ar[d]_{\tilde{f}} & B_{O}(B) \ar[d] \\
N \ar[r] & M
}
\end{equation}
with $i$ a cofibration in $Coll(S)$, $f:A\rightarrow N$ an $S$-sequence map called the \textit{attaching map} and $\tilde{f}$ the $O$-bimodule map induced by $f$ (see Proposition \ref{p2}). In both cases the map $N\rightarrow M$ so defined is a cofibration.  
\end{defi}

\begin{defi}
Let $A$, $B$ and $C$ be three topological spaces and $f:A\rightarrow B$ be a continuous map. The space of continuous maps $g:C\rightarrow B$ such that $g_{|A}=f$ is denoted by $Top^{f}((C,A),B)$.
\end{defi}


\begin{lmm}{\cite{Tourtchine:arXiv1012.5957}}\label{lemme2}
Let $M$ and $N$ be two $O$-infinitesimal bimodules. If $M$ is obtained from $N$ by attaching cells as in $(6)$, then one has the following homeomorphism: 
$$
Ibimod_{O}^{g}((M,N);Y)\cong Coll(S)^{g\circ f}((B,A);Y),
$$
with $f$ the attaching map and $g:N\rightarrow Y$ an $O$-infinitesimal bimodule map.

Similarly, let $M$ and $N$ be two $O$-bimodules. If $M$ is obtained from $N$ by attaching cells as in $(7)$, then one has the following homeomorphism: 
$$
Bimod_{O}^{g}((M,N);Y)\cong Coll(S)^{g\circ f}((B,A);Y),
$$
with $f$ the attaching map and $g:N\rightarrow Y$ an $O$-bimodule map.
\end{lmm}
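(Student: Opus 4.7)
The two assertions are formally identical once one invokes the respective free--forgetful adjunction ($Ib_O \dashv U$ in Proposition \ref{p1}, $B_O \dashv U$ in Proposition \ref{p2}), so I will treat only the infinitesimal-bimodule case in detail. The argument amounts to threading the universal property of the pushout through the adjunction, taking some care that the bijections involved are in fact homeomorphisms and not merely set-theoretic.

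First I apply the universal property of the pushout (6). An $O$-infinitesimal bimodule map $h : M \to Y$ with $h|_N = g$ is the same datum as an $O$-infinitesimal bimodule map $\alpha : Ib_O(B) \to Y$ satisfying the compatibility
$$
\alpha \circ Ib_O(i) \;=\; g \circ \tilde{f}.
$$
Since $g$ is fixed, this exhibits $Ibimod_O^g((M,N);Y)$ as the subspace of $Ibimod_O(Ib_O(B),Y)$ cut out by that equation. Because pushouts in $Ibimod_O$ are mapped by the hom-functor to equalizers, this identification is a homeomorphism for the natural (compact-open-type) mapping-space topologies.

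Next I apply the adjunction of Proposition \ref{p1}. It gives a natural homeomorphism $Ibimod_O(Ib_O(B),Y) \cong Coll(S)(B, UY)$ sending $\alpha$ to $\beta := U\alpha \circ \eta_B$, where $\eta_B : B \to U Ib_O(B)$ is the unit that sends an element $m \in B(s_1,\ldots,s_n;s_{n+1})$ to the pearl $n$-corolla indexed by $m$. Naturality of $\eta$ yields $Ib_O(i) \circ \eta_A = \eta_B \circ i$, so that $\alpha \circ Ib_O(i)$ corresponds to $\beta \circ i$; on the other hand $\tilde{f}$ corresponds to $f$ by the very construction of $\tilde f$ (it is the unique extension making $\tilde{f} \circ \eta_A = f$ hold), so $g \circ \tilde{f}$ corresponds to $g \circ f$. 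The compatibility equation therefore transports to $\beta \circ i = g \circ f$, which is exactly the defining condition for membership of $\beta$ in $Coll(S)^{g \circ f}((B,A);Y)$. Composing with the first step produces the claimed homeomorphism. The bimodule case is obtained verbatim, with $Ib_O$, (6), and Proposition \ref{p1} replaced throughout by $B_O$, (7), and Proposition \ref{p2}.

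The only point I expect to require care is the upgrade from bijections of sets to homeomorphisms of mapping spaces. This ought to be routine: $Ibimod_O$ and $Bimod_O$ inherit their hom-space topologies levelwise from $Coll(S)$, hence ultimately from the compact-open topology on $Top$; the free functors $Ib_O$ and $B_O$ are left adjoints and therefore commute with the colimits defining the pushouts, and the unit $\eta$ of each adjunction is continuous, so both identifications above are continuous with continuous inverses.
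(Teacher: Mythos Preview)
The paper does not supply its own proof of this lemma; the citation in the heading indicates it is quoted from Turchin, and the text moves directly to the next definition. Your argument is correct and is precisely the standard one: thread the universal property of the pushout through the free--forgetful adjunction, and check that the resulting bijection respects the mapping-space topologies. One small terminological slip: the contravariant hom-functor sends pushouts to \emph{pullbacks}, not equalizers, so the identification of $Ibimod_O^g((M,N);Y)$ with the fibre of $Ibimod_O(Ib_O(B),Y)\to Ibimod_O(Ib_O(A),Y)$ over $g\circ\tilde f$ is really a pullback computation; this does not affect the argument. Your discussion of the topology is adequate for the level of the paper, though if you wanted to be fully rigorous you would note that the explicit description of $Ib_O$ (and $B_O$) as a quotient of a coproduct of finite products makes the inverse of the adjunction bijection visibly continuous as well.
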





\begin{defi}\label{Remarque}
$i)\,\,$ As in \cite{Tourtchine:arXiv1012.5957} ( see also \cite[Lemma 4.24]{MR1361887} ), if $A$ and $B$ are $O$-infinitesimal bimodules (resp. $O$-bimodules), and $A^{c}$ is a cofibrant replacement of $A$ then $Ibimod_{O}(A^{c}\,;\,B)$ (resp. $Bimod_{O}(A^{c}\,;\,B)$) is independent, up to weak equivalences, of the choice of a cofibrant replacement of $A$ since every $O$-infinitesimal bimodule (resp. $O$-bimodule) $B$ is fibrant. This space is called the space of derived $O$-infinitesimal bimodule (resp. $O$-bimodule) maps from $A$ to $B$ and is denoted by:
$$
Ibimod_{O}^{h}(A\,;\,B)\,\,\,\,\,\,\,\,\,\,\big(\,\,\text{resp.}\,\,\, Bimod_{O}^{h}(A\,;\,B)\,\,\big).
$$
$ii)\,\,$ Similarly, Berger and Moerdijk define a model category structure on the category of $S$-coloured operads in \cite{BergerMoerdijk} and $Operad^{h}_{S}(A\,;\,B)$ denotes the space of derived $S$-operad maps from $A$ to $B$.\\{}\\
$iii)\,\,$ If $\mathcal{C}$ is the category $Bimod_{\sc0}$ (resp. $Operad_{\{o\,;\,c\}}$) then for any cofibrant model $A$ of $\sc0$, the family $A_{c}$ gives rise to a cofibrant replacement of $As_{>0}$ in the category $Bimod_{As_{>0}}$ (resp. $Operad$). As a consequence the homotopy fiber of the projection onto the closed part is independent (up to weak equivalences) of the choice of a cofibrant model. By abuse of notation we denote by:
\begin{align}
& p_{1}^{h}:Bimod_{\sc0}^{h}(\sc0\,;\,M)\rightarrow Bimod_{As_{>0}}^{h}(As_{>0}\,;\,M_{c})\\
& p_{2}^{h}:Operad_{\{o\,;\,c\}}^{h}(\sc0\,;\,O)\rightarrow Operad^{h}(As_{>0}\,;\,O_{c})
\end{align}
the projections onto the closed part, whenever a cofibrant model of $\sc0$ is fixed. Furthermore if the $\sc0$-bimodule $M$ and the $\{o\,;\,c\}$-operad $O$ are endowed with a map from $\tc$ then all the spaces and maps are pointed. In this case, define
\begin{center}
$\Omega\big( Bimod_{As_{>0}}^{h}(As_{>0}\,;\,M_{c}) \,;\, Bimod_{\sc0}^{h}(\sc0\,;\,M)\big)\,\,\,$ and $\,\,\,\Omega\big(  Operad^{h}(As_{>0}\,;\,O_{c}) \,;\, Operad_{\{o\,;\,c\}}^{h}(\sc0\,;\,O)\big)$ 
\end{center}
to be respectively the homotopy fiber of the projection $p_{1}^{h}$ and $p_{2}^{h}$. They are called relative loop spaces.
\end{defi}

Hence, in order to describe the spaces of derived maps and the relative loop spaces, we need to understand specific cofibrant replacement in the different categories involved. This is the aim of the two following subsections. 


\subsection{Cofibrant replacement of $\tc$ in $Ibimod_{\sc0}$} 

\begin{pro}\label{cof1}
A cofibrant replacement of the $\sc0$-infinitesimal bimodule $\tc$ is given by the $\sc0$-infinitesimal bimodule $\deltaII$:
\begin{center}
$\deltaII(n\,;\,c)=\Delta^{n}\,\,\,$ for $n\geq 0$ $\,\,\,\,\,\,\,\,$ and $\,\,\,\, \deltaII(n\,;\,o)=\Delta^{n-1}\times [0\,,\,1]\,\,\,$ for $n>0$,
\end{center}
where the structure is defined by:
$$
\begin{array}{llllllll}
\bullet & -\, \circ^{i} \ast_{2\,;\,c} & : & \deltaII(n\,;\,c)   \rightarrow \deltaII(n+1\,;\,c) & ; & (t_{1}\leq \cdots \leq t_{n}) &\mapsto (t_{1}\leq \cdots \leq t_{i} \leq t_{i} \leq \cdots \leq t_{n}), & 1\leq i \leq n,  \\ 
\bullet & -\, \circ^{i} \ast_{2\,;\,c} & : &\deltaII(n\,;\,o)   \rightarrow \deltaII(n+1\,;\,o) & ; & (t_{1}\leq \cdots \leq t_{n-1})\times t &\mapsto (t_{1}\leq \cdots \leq t_{i} \leq t_{i} \leq \cdots \leq t_{n-1})\times t, & 1\leq i\leq n-1, \\ 
\bullet & -\, \circ^{n} \ast_{2\,;\,o} & : & \deltaII(n\,;\,o)  \rightarrow \deltaII(n+1\,;\,o) &  ; & (t_{1}\leq \cdots \leq t_{n-1})\times t &\mapsto (t_{1}\leq \cdots \leq t_{n-1} \leq 1) \times t, & \\ 
\bullet & \ast_{2\,;\,c}\, \circ_{2} \, - &: & \deltaII(n\,;\,c) \rightarrow  \deltaII(n+1\,;\,c) & ; & (t_{1}\leq \cdots \leq t_{n}) &\mapsto (0\leq t_{1}\leq \cdots \leq t_{n}), &  \\ 
\bullet & \ast_{2\,;\,c}\, \circ_{1} \, - &:& \deltaII(n\,;\,c) \rightarrow  \deltaII(n+1\,;\,c) & ; & (t_{1}\leq \cdots \leq t_{n}) &\mapsto (t_{1}\leq \cdots \leq t_{n} \leq 1), &  \\ 
\bullet & \ast_{2\,;\,o}\, \circ_{2} \,- &:& \deltaII(n\,;\,o) \rightarrow  \deltaII(n+1\,;\,o) & ; & (t_{1}\leq \cdots \leq t_{n-1})\times t &\mapsto (0\leq t_{1}\leq \cdots \leq t_{n-1})\times t, &  \\ 
\bullet & \ast_{2\,;\,o}\, \circ_{1} \,- &:& \deltaII(n\,;\,c) \rightarrow  \deltaII(n+1\,;\,o) & ; & (t_{1}\leq \cdots \leq t_{n}) &\mapsto (t_{1}\leq \cdots \leq t_{n})\times 1. & 
\end{array} 
$$
\end{pro}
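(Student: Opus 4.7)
The proof has two parts: verifying that $\deltaII$ with the listed operations is indeed an $\sc0$-infinitesimal bimodule that admits a weak equivalence to $\tc$, and then exhibiting $\deltaII$ as cofibrant in $Ibimod_{\sc0}$. By Proposition \ref{SCwbimodule}, the first half reduces to checking that the prescribed cofaces on $\{\Delta^n\}_{n\geq 0}$ and $\{\Delta^{n-1}\times[0,1]\}_{n\geq 1}$ satisfy the semi-cosimplicial identities and that the maps $x\mapsto (x,1)$ assemble into a semi-cosimplicial map $h$; both are immediate combinatorial checks. The componentwise collapse $\deltaII \to \tc$ is then an $\sc0$-infinitesimal bimodule map, and it is an objectwise weak equivalence since every $\Delta^n$ and every $\Delta^{n-1}\times[0,1]$ is contractible.

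For cofibrancy, I would build $\deltaII$ from the initial (empty) $\sc0$-infinitesimal bimodule as a sequential colimit of cell attachments indexed by arity. Let $\deltaII^{(n)}$ denote the sub-$\sc0$-infinitesimal bimodule generated by the cells $\Delta^k$ and $\Delta^{k-1}\times[0,1]$ for $k\leq n$. The key geometric observation is that in $\deltaII(n;c)$ the full boundary $\partial\Delta^n$ lies in the image of operations from $\deltaII^{(n-1)}$: the diagonal faces $t_i=t_{i+1}$ come from $-\circ^i\ast_{2\,;\,c}$ applied to $\Delta^{n-1}$, the face $t_1=0$ from $\ast_{2\,;\,c}\circ_2-$, and the face $t_n=1$ from $\ast_{2\,;\,c}\circ_1-$. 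In $\deltaII(n;o)$, all codimension-$1$ faces of $\Delta^{n-1}\times[0,1]$ are likewise covered by operations (with the top face $\Delta^{n-1}\times\{1\}$ arising from $\ast_{2\,;\,o}\circ_1-$ applied to $\deltaII(n-1;c)$) except the bottom face $\Delta^{n-1}\times\{0\}$, which is not in the image of any operation.

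Define the $\{o\,;\,c\}$-sequence map $A_n \hookrightarrow B_n$ concentrated in arity $n$ by
$$
A_n(n\,;\,c)=\partial\Delta^n,\; B_n(n\,;\,c)=\Delta^n,\; A_n(n\,;\,o)=\partial(\Delta^{n-1}\times[0,1])\setminus(\Delta^{n-1}\times\{0\}),\; B_n(n\,;\,o)=\Delta^{n-1}\times[0,1].
$$
Each component is a cofibration in $Top$, so $A_n\hookrightarrow B_n$ is a cofibration in $Coll(\{o\,;\,c\})$. The boundary covering assembles into an $\{o\,;\,c\}$-sequence map $A_n \to \deltaII^{(n-1)}$, which extends by Proposition \ref{p1} to an $\sc0$-infinitesimal bimodule map $Ib_{\sc0}(A_n)\to\deltaII^{(n-1)}$. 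The claim is that $\deltaII^{(n)}$ is the pushout in $Ibimod_{\sc0}$ of $Ib_{\sc0}(A_n)\to Ib_{\sc0}(B_n)$ along this map. Using Lemma \ref{lemme2}, maps out of this pushout to any $\sc0$-infinitesimal bimodule $Y$ are equivalent to compatible pairs consisting of an $\sc0$-infinitesimal bimodule map $\deltaII^{(n-1)}\to Y$ and a $Coll(\{o\,;\,c\})$-extension $B_n\to Y$, which is precisely the data specifying an $\sc0$-infinitesimal bimodule map out of $\deltaII^{(n)}$.

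The main technical obstacle is this universal-property verification: one must confirm that the $\sc0$-action on $\deltaII^{(n)}$ is completely determined by the $\sc0$-action on $\deltaII^{(n-1)}$ together with the data of the two new cells, i.e.\ that no relations beyond those imposed by $A_n\hookrightarrow B_n$ are introduced. This reduces to unwinding the pearl-tree description of $Ib_{\sc0}(B_n)$ and using that $\sc0$ is generated by $\ast_{2\,;\,c}$ and $\ast_{2\,;\,o}$. Once this is in hand, each $\deltaII^{(n-1)}\hookrightarrow\deltaII^{(n)}$ is a cofibration as a pushout of a generating cofibration, and $\deltaII=\mathrm{colim}_n \deltaII^{(n)}$ is cofibrant as a transfinite composition of cofibrations.
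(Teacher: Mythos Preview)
Your strategy is the same as the paper's: filter $\deltaII$ by arity, exhibit each stage as a cell attachment in $Ibimod_{\sc0}$, and conclude cofibrancy from the transfinite composition; contractibility in each degree gives the weak equivalence to $\tc$. The paper carries this out with one bookkeeping difference. At arity $N$ it performs \emph{two} attachments: first it freely attaches the bottom disc $\Delta^{N-1}\times\{0\}$ along its boundary $\partial\Delta^{N-1}\times\{0\}$ (producing an intermediate object $\Lambda$ with $\Lambda(N;o)\cong\partial(\Delta^{N-1}\times[0,1])$), and only then attaches $\Delta^N$ and $\Delta^{N-1}\times[0,1]$ along their \emph{full} boundaries. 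Your single-step attachment along a horn is a legitimate shortcut and yields the same filtration.

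There is, however, a genuine imprecision in your definition of $A_n(n;o)$. You set
\[
A_n(n;o)=\partial\bigl(\Delta^{n-1}\times[0,1]\bigr)\setminus\bigl(\Delta^{n-1}\times\{0\}\bigr),
\]
removing the \emph{closed} bottom face. But the boundary $\partial\Delta^{n-1}\times\{0\}$ of that face \emph{is} reached by operations from $\deltaII^{(n-1)}$ (it lies in the lateral faces $\partial\Delta^{n-1}\times[0,1]$, each of which is the image of some $-\circ^{i}\ast_{2;c}$ or $\ast_{2;o}\circ_{2}-$). With your $A_n(n;o)$ as written, two things go wrong: the subspace is not closed in $\Delta^{n-1}\times[0,1]$, so the inclusion $A_n(n;o)\hookrightarrow B_n(n;o)$ is \emph{not} a Serre cofibration; and the attaching map misses $\partial\Delta^{n-1}\times\{0\}\subset\deltaII^{(n-1)}(n;o)$, so the pushout would duplicate those points rather than recover $\deltaII^{(n)}$. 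The fix is to remove only the interior,
\[
A_n(n;o)=\partial\bigl(\Delta^{n-1}\times[0,1]\bigr)\setminus\mathrm{Int}\bigl(\Delta^{n-1}\times\{0\}\bigr),
\]
which is exactly the identification the paper makes for $\deltaII_{N-1}(N;o)$. With this correction your argument goes through.
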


\begin{proof}
Since $\sc0$ is generated as a coloured operad by $\ast_{2\,;\,c}$ and $\ast_{2\,;\,o}$ with the relations $(1)$ of Definition \ref{AM}, the previous structure makes $\deltaII$ into an $\sc0$-infinitesimal bimodule. Let $\deltaII_{N}$ be the sub-$\sc0$-infinitesimal bimodule of $\deltaII$ generated by $\{\deltaII(n\,;\,c)\}_{n=0}^{N}\sqcup \{\deltaII(n\,;\,o)\}_{n=1}^{N}$ with $N\in \mathbb{N}$. By convention $\deltaII_{-1}$ is the infinitesimal bimodule $Ib_{\sc0}(\emptyset)$ and $\partial \Delta^{0} = \emptyset$. The space $\deltaII_{0}$ is obtained from $\deltaII_{-1}$ by the attaching cells:
$$\begin{array}{ccc}
Ib_{\sc0}(\partial A) & \rightarrow & Ib_{\sc0}(A) \\ 
\downarrow &  & \downarrow \\ 
\deltaII_{-1} & \rightarrow & \deltaII_{0}
\end{array}$$
with $A(0;c)=\Delta^{0}$ and the empty set otherwise.

Let  $B$ and $C$ be the $\{o\,;\,c\}$-sequences given by $B(N\,;\,o)=\Delta^{N-1}\times \{0\}$, $\,\,C(N\,;\,o)=\Delta^{N-1}\times [0\,,\,1]$, $\,\,C(N\,;\,c)=\Delta^ {N}$ and the empty set otherwise. For $N\in \mathbb{N}_{>0}$, the infinitesimal bimodule $\deltaII_{N}$ is obtained from $\deltaII_{N-1}$ by the sequence of attaching cells:
\begin{center}
$
\begin{array}{ccc}
Ib_{\sc0}(\partial B) & \rightarrow & Ib_{\sc0}(B) \\ 
\downarrow &  & \downarrow \\ 
\deltaII_{N-1} & \rightarrow & \Lambda
\end{array} \,\,\,\,\,$
and $
\,\,\,\,\,
\begin{array}{ccc}
Ib_{\sc0}(\partial C) & \rightarrow & Ib_{\sc0}(C) \\ 
\downarrow &  & \downarrow \\ 
\Lambda & \rightarrow & \deltaII_{N}
\end{array}.
$
\end{center} 
The attaching map $\partial B\rightarrow \deltaII_{N-1}$ is the restriction to the boundary of the application:
\begin{center}
 $i:\Delta^ {N-1}\rightarrow \deltaII(N\,;\,o);\,\,\, (t_{1}\leq \cdots \leq t_{N-1}) \mapsto (t_{1}\leq \cdots \leq t_{N-1})\times {0}$.
\end{center}
The homeomorphisms $\deltaII_{N-1}(N\,;\,o)\rightarrow \partial (\Delta^{N-1}\times [0\,,\,1])\setminus Int(\Delta^{N-1}\times\{0\})$ and $B(N\,;\,o)\rightarrow \Delta^{N-1}\times\{0\}$ give rise to an homeomorphism from  $\Lambda(N\,;\,o)$ to $\partial \deltaII(N\,;\,o)=\partial C(N\,;\,o)$ yielding  the right hand side attaching map. For $N\geq n$, $\deltaII_{N}(n\,;\,k)=\deltaII(n\,;\,k)$ with $k\in \{o\,;\,c\}$. Consequently, $lim_{N} \deltaII_{N}=\deltaII$ and $\deltaII$ is cofibrant. The weak equivalence between $\deltaII$ and $\tc$ is due to the convexity of $\deltaII$ in each degree.   
\end{proof}

\begin{figure}[!h]
\begin{center}
\includegraphics[scale=0.5]{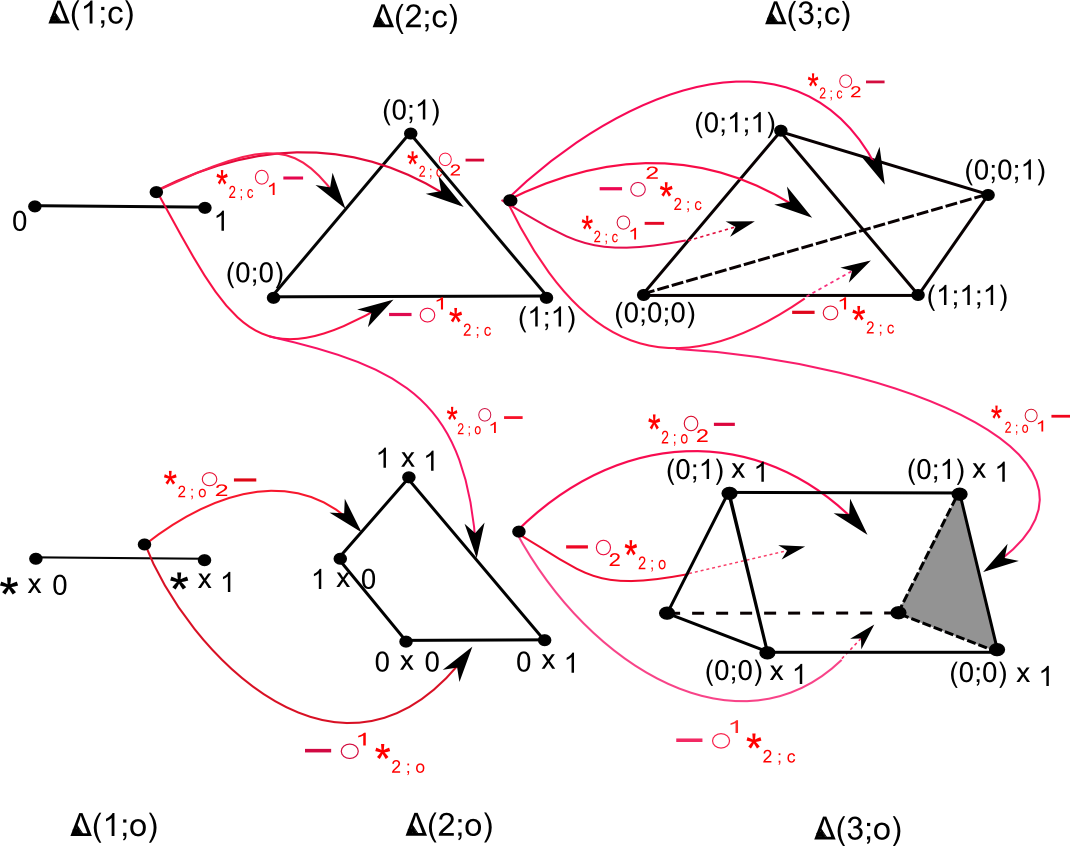}
\caption{Structure of $ \deltaII$.}
\end{center}
\end{figure}

\begin{rmk}
According to Definition \ref{Remarque}, the sequence given by $\Delta(n)=\deltaII(n\,;\,c)=\Delta^{n}$ inherits an $As_{>0}$-infinitesimal bimodule structure and it is a cofibrant replacement of $As$ in the model category $Ibimod_{As_{>0}}$ (see also \cite[Proposition 3.2]{Tourtchine:arXiv1012.5957}).
\end{rmk}

\begin{thm}\label{coro1}
Let $M$ be an $\sc0$-infinitesimal bimodule. One has:
\begin{center}
 $Ibimod^{h}_{\sc0}(\tc\,;\,M)\simeq Ibimod^{h}_{As_{>0}}(As\,;\,M_{c})\simeq sTot(M_{c})$.
\end{center}
\end{thm}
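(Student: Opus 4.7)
The strategy is to combine three ingredients: the cofibrant replacement $\deltaII$ from Proposition \ref{cof1}, the explicit form of an $\sc0$-infinitesimal bimodule map out of $\deltaII$ given by Proposition \ref{SCwbimodule}, and Turchin's identification of the derived $As_{>0}$-infinitesimal bimodule mapping space with the semi-totalisation. By Proposition \ref{cof1} and the remark that $\Delta(n)=\deltaII(n\,;\,c)=\Delta^{n}$ is a cofibrant replacement of $As$ in $Ibimod_{As_{>0}}$, the two derived mapping spaces are computed as honest mapping spaces:
$$Ibimod^{h}_{\sc0}(\tc\,;\,M)\simeq Ibimod_{\sc0}(\deltaII\,;\,M),\qquad Ibimod^{h}_{As_{>0}}(As\,;\,M_{c})\simeq Ibimod_{As_{>0}}(\Delta\,;\,M_{c}).$$
The right-hand space identifies tautologically with $sTot(M_{c})$ via the theorem of Turchin cited in the introduction: an $As_{>0}$-infinitesimal bimodule map out of $\Delta$ is precisely a natural transformation $\Delta^{\bullet}\to M_{c}^{\bullet}$.

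Second, I would introduce the restriction map
$$R\colon Ibimod_{\sc0}(\deltaII\,;\,M)\longrightarrow Ibimod_{As_{>0}}(\Delta\,;\,M_{c}),\qquad \phi\mapsto\phi_{c},$$
and prove that it is a weak equivalence. By Proposition \ref{SCwbimodule}, an $\sc0$-infinitesimal bimodule map $\phi\colon\deltaII\to M$ amounts to a pair of semi-cosimplicial maps $(\phi_{c},\phi_{o})$ together with the compatibility $\phi_{o}\circ h_{\deltaII}=h_{M}\circ\phi_{c}$, where $h_{\deltaII}\colon\Delta^{n}\to\Delta^{n}\times[0,1]$ is the inclusion at $t=1$. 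Consequently, the fibre of $R$ over a given $\alpha\colon\Delta\to M_{c}$ is the space
$$F_{\alpha}=\{\,\phi_{o}\colon\deltaII_{o}\to M_{o}\ \text{semi-cosimplicial}\ :\ \phi_{o}(x,1)=h_{M}(\alpha(x))\,\}.$$

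Third, I would exhibit an explicit deformation retraction of $F_{\alpha}$ onto the constant section $(x,t)\mapsto h_{M}(\alpha(x))$ by
$$H_{s}(\phi_{o})(x,t)=\phi_{o}\bigl(x,\,1-(1-s)(1-t)\bigr),\qquad s\in[0,1].$$
The crucial structural feature supplied by Proposition \ref{cof1} is that every coface of $\deltaII_{o}^{n}=\Delta^{n-1}\times[0,1]$ acts solely on the simplicial coordinate and leaves the $[0,1]$-factor untouched, so $H_{s}(\phi_{o})$ remains semi-cosimplicial at every $s$ and preserves the prescribed value at $t=1$. This shows that every fibre of $R$ is contractible. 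To upgrade contractible fibres to a weak equivalence, I would show that $R$ is a Serre fibration by exploiting the cellular filtration $\deltaII_{-1}\subset\deltaII_{0}\subset\cdots\subset\deltaII$ of Proposition \ref{cof1}: at each stage $\deltaII_{N-1}\hookrightarrow\deltaII_{N}$, Lemma \ref{lemme2} reduces the lifting problem to one in $Coll(\{o\,;\,c\})$ against the generating cofibrations $\partial B\hookrightarrow B$ and $\partial C\hookrightarrow C$, which admit solutions for Serre fibrations.

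The main technical obstacle is the last point, namely making the fibration property precise and compatible with the filtration—in particular ensuring that the deformation $H_{s}$ is continuous as a function of all parameters (the point in $F_{\alpha}$, the base datum $\alpha$, and any test space) and patches coherently across the levels $N$. Once these verifications are in place, the chain $Ibimod^{h}_{\sc0}(\tc\,;\,M)\simeq Ibimod_{\sc0}(\deltaII\,;\,M)\xrightarrow{\,R\,}Ibimod_{As_{>0}}(\Delta\,;\,M_{c})\simeq sTot(M_{c})$ establishes both weak equivalences of the theorem.
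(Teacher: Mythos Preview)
Your proof is correct and uses the same cofibrant replacements and, crucially, the same homotopy formula as the paper. The one difference is in packaging: you consider the restriction $R$, prove each fibre $F_{\alpha}$ is contractible via $H_{s}$, and then plan to verify separately that $R$ is a Serre fibration. The paper instead goes in the other direction: it defines an explicit section
\[
i\colon Ibimod_{As_{>0}}(\Delta\,;\,M_{c})\hookrightarrow Ibimod_{\sc0}(\deltaII\,;\,M),\qquad i(\alpha)_{n;o}(x,t)=h_{M}\bigl(\alpha_{n-1}(x)\bigr),
\]
and shows that $i$ is a deformation retract of the \emph{whole} mapping space, using precisely your homotopy $H_{s}(\phi)_{n;o}(x,t)=\phi_{n;o}\bigl(x,\,tu+(1-u)\bigr)$ applied globally (with $u=1-s$).

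The point is that your $H_{s}$ does not depend on the base point $\alpha$ at all; it therefore assembles into a continuous homotopy on all of $Ibimod_{\sc0}(\deltaII\,;\,M)$, fixing the image of $i$ and retracting onto it. Once you notice this, the ``main technical obstacle'' you flag (proving $R$ is a Serre fibration via the cellular filtration and Lemma \ref{lemme2}) evaporates: you get a genuine deformation retract, hence a homotopy equivalence, with no fibration argument needed. Your route works, but it is strictly longer than necessary.
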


\begin{proof}
From Proposition \ref{cof1} and the previous remark, a cofibrant replacement of $\tc$ in the model category $Ibimod_{\sc0}$ is given by $\deltaII$ and a cofibrant replacement of the associative operad $As$ in the model category $Ibimod_{As_{>0}}$ is given by $\Delta$. Since $M_{c}$ is an infinitesimal bimodule over $As_{>0}$ (see Proposition \ref{SCwbimodule}), Definition \ref{Remarque} induces the following:  
\begin{center}
$Ibimod_{As_{>0}}^{h}(As\,;\,M_{c})\simeq Ibimod_{As_{>0}}(\Delta\,;\,M_{c})\,\,\,\,\,\,$ and $\,\,\,\,\,\,Ibimod_{\sc0}^{h}( \tc\,;\,M)\simeq Ibimod_{\sc0}(\deltaII\,;\,M)$.
\end{center}
Let $i$ be the inclusion defined by:
$$
\begin{array}{cccc}
i: & Ibimod_{As_{>0}}(\Delta\,;\,M_{c}) & \hookrightarrow & Ibimod_{\sc0}( \deltaII\,;\,M)
\end{array}
$$
which sends a point $f:=\left\{ f_{n\,;\,c}:\Delta^{n}\rightarrow M(n\,;\,c) \right\}_{n\in \mathbb{N}}$ to the map $g$ defined by:
$$\left\{
\begin{array}{lll}
g_{n\,;\,c}: & \Delta^{n}\rightarrow M(n\,;\,c) & ;(t_{1} \leq \cdots \leq t_{n})\mapsto f_{n\,;\,c}(t_{1} \leq \cdots \leq t_{n}) \\ 
g_{n\,;\,o}: & \Delta^{n-1}\times I\rightarrow M(n\,;\,o) & ;(t_{1} \leq \cdots \leq t_{n-1})\times t \mapsto \ast_{2\,;\,o}\circ_{1} f_{n-1\,;\,c}(t_{1} \leq \cdots \leq t_{n-1})
\end{array}\right\}.
$$
The space $Ibimod_{As_{>0}}(\Delta\,;\,M_{c})$ is a deformation retract   of $Ibimod_{\sc0}(\deltaII\,;\,M)$ with the following homotopy:
$$
\begin{array}{cccc}
H: & Ibimod_{\sc0}( \deltaII \,;\,M)\times [0\,,\,1] & \rightarrow & Ibimod_{\sc0}( \deltaII \,;\,M)
\end{array} 
$$
sending a point $(f\times u)$ to the map $H(f\,;\,u)$ given by:
\begin{center}
$\left\{
\begin{array}{lll}
H(f\,;\,u)_{n\,;\,c}: & \Delta^{n}\rightarrow M(n\,;\,c) & ;(t_{1} \leq \cdots \leq t_{n})\mapsto f_{n\,;\,c}(t_{1} \leq \cdots \leq t_{n}) \\ 
H(f\,;\,u)_{n\,;\,o}: & \Delta^{n-1}\times I\rightarrow M(n\,;\,o) & ;(t_{1} \leq \cdots \leq t_{n-1})\times t \mapsto f_{n\,;\,o}\big((t_{1} \leq \cdots \leq t_{n-1})\times (tu+(1-u))\big)
\end{array}\right\}
$.
\end{center}
The map $H$ is continuous and $H(f\,;\,1)=f$. Furthermore:
$$
\begin{array}{lllll}
H(f\,;\, 0)_{n\,;\,o}\big( (t_{1}\leq \cdots \leq t_{n-1})\times t\big) & = & f_{n\,;\,o}\big( (t_{1}\leq \cdots \leq t_{n-1})\times 1\big) & = &  f_{n\,;\,o}\big(\ast_{2\,;\,o} \circ_{1} (t_{1}\leq \cdots \leq t_{n-1})\big)\\
 & = &  \ast_{2\,;\,o} \circ_{1} f_{n-1\,;\,c}(t_{1}\leq \cdots \leq t_{n-1}) & = & \ast_{2\,;\,o} \circ_{1} f_{n-1\,;\,c}(t_{1}\leq \cdots \leq t_{n-1}).
\end{array} 
$$
So $H(f\,;\,0)$ is in the image of the inclusion map $i$ and $\forall f\in Ibimod_{As_{>0}}(\Delta\,;\,M_{c})$, $\forall u\in [0\,,\,1]$, $\,\,H(i(f)\,;\,u)=i(f)$. Indeed: 
$$
\begin{array}{lll}
\big( H(i(f)\,;\,u)\big)_{n\,;\,o}\big( (t_{1}\leq\cdots \leq t_{n-1})\times t\big) & = & i(f)_{n\,;\,o}\big( (t_{1}\leq\cdots \leq t_{n-1})\times (tu+(1-u))\big) \\
 & = & (\ast_{2\,;\,o}\circ_{1} f_{n-1\,;\,c})(t_{1}\leq\cdots \leq t_{n-1})\\
 & = & i(f)_{n\,;\,o}\big( (t_{1}\leq\cdots \leq t_{n-1})\times t\big). 
\end{array} 
$$
\end{proof}

\begin{cor}
Let $M$ be an $\sc0$-bimodule such that $M(0\,;\,c)\simeq \ast$ and let $\eta:\tc\rightarrow M$ be a map of $\sc0$-bimodules. The following weak equivalences hold: 
$$
Ibimod_{\sc0}^{h}( \tc\,;\,M) \simeq Ibimod_{As_{>0}}^{h}(As\,;\,M_{c}) \simeq \Omega Bimod^{h}_{As_{>0}}(As_{>0}\,;\,M_{c}).
$$
Similarly, let $O$ be an $\{o\,;\,c\}$-operad such that $O(0\,;\,c)\simeq O(1\,;\,c)\simeq \ast$ and let $\eta:\tc\rightarrow O$ be a map of $\{o\,;\,c\}$-operads. The following weak equivalences hold:
$$
Ibimod_{\sc0}^{h}( \tc;O) \simeq Ibimod_{As_{>0}}^{h}(As\,;\,O_{c}) \simeq \Omega^{2} Operad_{\{o\,;\,c\}}^{h}(As_{>0}\,;\,O_{c}).
$$ 
\end{cor}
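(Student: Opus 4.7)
The plan is to reduce each claim, via Theorem \ref{coro1} which already gives the first weak equivalence on the left, to an application of the existing uncoloured (iterated) delooping theorems of Turchin \cite{Tourtchine:arXiv1012.5957} and Turchin / Dwyer--Hess \cite{2010arXiv1006.0874D} applied to the closed part. Hence I would organise the proof in two short halves, each of which is essentially a verification that the hypotheses transfer correctly to $M_c$ (resp.\ $O_c$).

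For the first assertion, I would invoke the last sentence of Proposition \ref{propSC} to conclude that the closed part $M_c$ is an $As_{>0}$-bimodule, and observe that the restriction of $\eta:\tc \to M$ to the colour $\{c\}$ is a bimodule map $As \to M_c$ (recall that $\tc$ restricted to $\{c\}$ is $As$). The hypothesis $M(0;c) = M_c^0 \simeq \ast$ is precisely what Turchin's one-fold delooping theorem requires to produce
$$Ibimod^h_{As_{>0}}(As; M_c) \simeq \Omega Bimod^h_{As_{>0}}(As_{>0}; M_c).$$
Chaining this with Theorem \ref{coro1} gives the claimed equivalence.

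For the second assertion, the same restriction to $\{c\}$ sends the $\{o;c\}$-operad $O$ to an uncoloured operad $O_c$ under $As$. The two contractibility conditions $O(0;c) \simeq O(1;c) \simeq \ast$ are exactly those of the two-fold Turchin / Dwyer--Hess delooping, so I may apply it to obtain
$$Ibimod^h_{As_{>0}}(As; O_c) \simeq \Omega^2 Operad^h(As_{>0}; O_c),$$
and again compose with Theorem \ref{coro1} to conclude.

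The main obstacle will be purely bookkeeping: one must confirm that the restriction to $\{c\}$ of a cofibrant model of $\sc0$ in $Bimod_{\sc0}$ (respectively in $Operad_{\{o;c\}}$) coincides, up to weak equivalence, with the cofibrant model of $As_{>0}$ used implicitly by the cited uncoloured theorems. This compatibility is already recorded in Definition \ref{Remarque}$(iii)$, so no new homotopical input is required beyond the invoked delooping theorems.
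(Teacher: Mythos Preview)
Your proposal is correct and follows essentially the same route as the paper: the paper's proof is the single sentence ``It is a consequence of Theorem \ref{coro1} together with \cite[Theorem 6.2]{Tourtchine:arXiv1012.5957} and \cite[Theorem 7.2]{Tourtchine:arXiv1012.5957}'', which is exactly the chain you describe (first equivalence from Theorem \ref{coro1}, then Turchin's one-fold and two-fold delooping applied to the closed part). Your additional remarks on why the hypotheses transfer to $M_c$ and $O_c$ and the pointer to Definition \ref{Remarque}$(iii)$ are correct but not strictly needed here, since the uncoloured theorems are applied directly to $M_c$ and $O_c$.
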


\begin{proof}
It is a consequence of Theorem \ref{coro1} together with \cite[Theorem $6.2$]{Tourtchine:arXiv1012.5957} and \cite[Theorem $7.2$]{Tourtchine:arXiv1012.5957}.
\end{proof}

\subsection{Cofibrant replacement of $\sc0$ in $Bimod_{\sc0}$}

\begin{pro}\label{Rbimodule}
A cofibrant replacement of the $\sc0$-bimodule  $\sc0$ is the $\sc0$-bimodule $\car$ defined by:
\begin{center}
$\car (n\,;\,c)=[0\, ;\, 1]^{n-1}$ for $n>0$ $\,\,\,\,\,\,$ and $\,\,\,\,\,\,$ $\car (n\,;\,o)=[0\, ;\, 1]^{n-1}$ for $n>0$, 
\end{center}
whose bimodule structure is given by:
$$
\begin{array}{lllllll}
\bullet & -\,\circ^{i} \ast_{2\,;\,c} &:&\car (n\,;\,c) \rightarrow \car (n+1\,;\,c) & ; & (t_{1},\ldots , t_{n-1})\mapsto (t_{1},\ldots , t_{i-1} ,0 ,t_{i} , \ldots , t_{n-1}), \,\, 1\leq i \leq n,& \\ 
\bullet & -\,\circ^{i} \ast_{2\,;\,c} &:& \car (n\,;\,o) \rightarrow \car (n+1\,;\,o) & ; & (t_{1},\ldots , t_{n-1})\mapsto (t_{1},\ldots , t_{i-1}, 0 , t_{i} , \ldots , t_{n-1}),\,\,  1\leq i \leq n-1,& \\ 
\bullet & - \,\circ^{n} \ast_{2\,;\,o} &:& \car (n\,;\,o) \rightarrow \car (n+1\,;\,o) & ; & (t_{1},\ldots , t_{n-1})\mapsto (t_{1},\ldots , t_{n-1}, 0), &  \\ 
\bullet & \ast_{2\,;\,c}(-;-)&:&\car (n\,;\,c) \times \car (m\,;\,c) \! \rightarrow \!\car (n+m\,;\,c) & ; & (t_{1},\ldots , t_{n-1})\, ; \, (t'_{1},\ldots , t'_{m-1})\! \mapsto\! (t_{1},\ldots , t_{n-1}, 1 , t'_{1},\ldots , t'_{m-1}), &  \\ 
\bullet & \ast_{2\,;\,o}(-;-) &:& \car (n\,;\,c) \times \car (m\,;\,o)\! \rightarrow\! \car (n+m\,;\,o) & ; & (t_{1},\ldots , t_{n-1})\, ; \, (t'_{1},\ldots , t'_{m-1})\! \mapsto\! (t_{1},\ldots , t_{n-1}, 1 , t'_{1},\ldots , t'_{m-1}). & \end{array} 
$$
\end{pro}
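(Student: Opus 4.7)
My plan mirrors the proof of Proposition \ref{cof1}. There are two things to show: first, that the explicit combinatorial data endow $\car$ with an $\sc0$-bimodule structure which is cofibrant; second, that the canonical map $\car \to \sc0$ collapsing each cube to its unique point is a weak equivalence.

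For the bimodule structure, since $\sc0$ is generated as an $\{o;c\}$-coloured operad by $\ast_{2;c}$ and $\ast_{2;o}$ subject to the associativity relations $(1)$ of Definition \ref{AM}, it suffices to specify the right-infinitesimal operations by $\ast_{2;c}$ and $\ast_{2;o}$, the left multiplications by $\ast_{2;c}$ and $\ast_{2;o}$, and verify compatibility with those relations. Each verification reduces to the observation that inserting $0$s (for right-infinitesimal actions) and $1$s (for left multiplications) in the listed positions of a cube commutes in the prescribed ways: for instance $(-\circ^i \ast_{2;c})\circ^j \ast_{2;c} = (-\circ^{j-1} \ast_{2;c})\circ^i \ast_{2;c}$ for $i<j$ simply because both compositions insert a $0$ at two given coordinates, and similarly for mixed left/right combinations.

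For cofibrancy I will filter $\car$ by arity: let $\car_N$ be the sub-$\sc0$-bimodule generated by $\{\car(n;c)\}_{n\le N}\cup\{\car(n;o)\}_{n\le N}$, with $\car_0 = B_{\sc0}(\emptyset)$. The central combinatorial fact is that every codimension-$1$ face of the cube $\car(N;c)=[0,1]^{N-1}$ lies in $\car_{N-1}$: the $N-1$ faces $\{t_i = 0\}$ are precisely the images of $-\circ^i \ast_{2;c}:\car(N-1;c)\to\car(N;c)$, and the $N-1$ faces $\{t_i = 1\}$ are images of the splitting $\ast_{2;c}(-;-):\car(i;c)\times\car(N-i;c)\to\car(N;c)$. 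Analogously for $\car(N;o)$, using $-\circ^{N-1}\ast_{2;o}$ for the outermost zero-face and $\ast_{2;o}(-;-)$ for the one-faces. Consequently $\car_N$ is obtained from $\car_{N-1}$ by two successive pushouts
$$
\xymatrix{
B_{\sc0}(\partial C_N^k) \ar@{^{(}->}[r] \ar[d] & B_{\sc0}(C_N^k) \ar[d] \\
(\car_{N-1})^{(k)} \ar[r] & (\car_N)^{(k)}
}
$$
for $k=c$ then $k=o$, where $C_N^k$ is the $\{o;c\}$-sequence concentrated in arity $N$ with $C_N^k(N;k)=[0,1]^{N-1}$, and the attaching maps on $\partial [0,1]^{N-1}$ are assembled from the right and left operations just identified. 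Since $\partial [0,1]^{N-1}\hookrightarrow [0,1]^{N-1}$ is a cofibration in $Coll(\{o;c\})$, each $\car_{N-1}\hookrightarrow \car_N$ is a cofibration of $\sc0$-bimodules, and $\car = \mathrm{colim}_N \car_N$ is cofibrant.

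Finally, the bimodule projection $\car\to \sc0$ sending every cube to its unique point is well-defined (each displayed operation in $\car$ corresponds, after contraction, to the composition relation $(1)$ in $\sc0$) and is a weak equivalence componentwise since each $[0,1]^{n-1}$ is contractible. The main obstacle is the clean verification of the cellular step: one must check that the attaching maps $\partial C_N^k \to \car_{N-1}$ obtained by glueing the $2(N-1)$ face parametrizations are consistent on corners, i.e.\ that wherever several coordinates simultaneously take values in $\{0,1\}$, the iterated compositions agree. This is exactly the content of the associativity and unit axioms of the $\sc0$-bimodule structure combined with the unit identifications in $B_{\sc0}$, so one only needs to trace that the four mixed cases (two zeros, two ones, zero-one, one-zero) match the relations $(1)$.
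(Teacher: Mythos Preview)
Your proof is correct and follows essentially the same route as the paper: filter $\car$ by arity, exhibit each stage as a cell attachment along the free-bimodule functor applied to $\partial[0,1]^{N-1}\hookrightarrow[0,1]^{N-1}$, and conclude with contractibility of the cubes. The only cosmetic difference is that the paper performs a single pushout at each stage with $A(N;c)=A(N;o)=[0,1]^{N-1}$ simultaneously, whereas you split this into two successive pushouts for the colours $c$ and $o$; since the $o$-attaching map at arity $N$ only involves closed cells of strictly lower arity, the two presentations are interchangeable.
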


\begin{proof}
Since $\sc0$ is generated as a coloured operad by $\ast_{2\,;\,c}$ and $\ast_{2\,;\,o}$ with the relations $(1)$ of Definition \ref{AM}, the previous structure induces an $\sc0$-bimodule structure on $\square$. For $N>0$ let $\car_{N}$ be the sub-$\sc0$-bimodule of $\car$ generated by $\{\car(n\,;\,k)\}_{n\in \{1,\ldots,N\}}^{k\in \{o\,;\,c\}}$. By convention $\car_{0}$ is the $\sc0$-bimodule $B_{\sc0}(\emptyset)$. The bimodule $\car_{N}$ is obtained from $\car_{N-1}$ by the attaching cells:
$$
\begin{array}{ccc}
B_{\sc0}(\partial A) & \rightarrow & B_{\sc0}(A) \\ 
\downarrow &  & \downarrow \\ 
\car_{N-1} & \rightarrow & \car_{N}
\end{array} 
$$ 
with $A$ the $\{o;c\}$-sequence defined by $A(N\,;\,c)=A(N\,;\,o)=[0\, ;\, 1]^{N-1}$ and the empty set otherwise.\\
For  $N\geq n$, $\car_{N}(n\,;\,k)=\car(n\,;\,k)$ with $k\in \{o\,;\,c\}$. Consequently, $lim_{N} \car_{N} = \car$ and $\car$ is cofibrant. The weak equivalence between $\car$ and $\sc0$ is due to the convexity of $\car$ in each degree.
\end{proof}

\begin{rmk}
According to Definition \ref{Remarque}, the sequence given by $\square_{c}(n)=\square(n\,;\,c)$ inherits an $As_{>0}$-bimodule structure and it is a cofibrant replacement of $As_{>0}$ in the model category $Bimod_{As_{>0}}$ (see \cite[Proposition 4.1]{Tourtchine:arXiv1012.5957}).
\end{rmk}

\section{Relative delooping of $sTot(M_{o})$}
Let $M$ be an $\sc0$-bimodule endowed with a map $\eta: \tc\rightarrow M$. 
Since the semi-cosimplicial space $M_{o}$ is not a monoid in $(Top^{\Delta_{inj}}\,,\,\boxtimes)$ (see Proposition \ref{definMS}), $M_{o}$  is not a bimodule over $As_{>0}$ and we can not expect that its semi-totalization has the homotopy type of a loop space. However, we will use the left module structure on $M_{o}$ to prove that the pair  $(sTot(M_{c})\,;\,sTot(M_{o}))$ has the homotopy type of an $\mathcal{SC}_{1}$-space. The first step consists in showing that $sTot(M_{o})$ is weakly equivalent to the homotopy fiber of the map $(8)$ of Definition \ref{Remarque}. The next definition gives a model of this homotopy fiber using the cofibrant replacement $\square$ of $\sc0$ (see Proposition  \ref{Rbimodule}).

\begin{defi}\label{Defifi}
Let $\eta:\tc \rightarrow M$ be an $\sc0$-bimodule map and let $\square \times I$ be the $\{o\,;\,c\}$-sequence defined by:
\begin{center}
$(\square  \times I)(n\,;\,c)=\square(n\,;\,c)\times [0\,,\,1]\,\,\,\,\,\,$ and $\,\,\,\,\,\,(\square  \times I)(n\,;\,o)=\square(n\,;\,o)\times \{1\},\,\,$ for $n >0$.
\end{center}
A \textit{relative loop} in $M$ is an $\{o\,;\,c\}$-sequence map $g$ from $\square \times I$ to $M$ defined by:
\begin{center}
$g_{n\,;\,c}:\square(n\,;\,c)\times [0\,,\,1]\rightarrow M(n\,;\,c)\,\,$  and  $\,\,g_{n\,;\,o}:\square(n\,;\,o)\times \{1\} \rightarrow M(n\,;\,o)$, for $n>0$, 
\end{center}
satisfying:
$$
\begin{array}{lll}
\bullet & g_{n\,;\,c}(x\circ^{i} \ast_{2\,;\,c}\,;\,t)=g_{n-1\,;\,c}(x\,;\,t)\circ^{i} \ast_{2\,;\,c} & \text{for}\,\, x\in \square(n-1\,;\,c) \,\,\text{and}\,\, 1\leq i \leq n-1,\\ 
\bullet & g_{n\,;\,c}\big( \ast_{2\,;\,c}(x\,;\,y)\,;\,t\big)=\ast_{2\,;\,c}\big( g_{l\,;\,c}(x\,;\,t)\,;\,g_{n-l\,;\,c}(y\,;\,t)\big) & \text{for}\,\, x\in \square(l\,;\,c) \,\, \text{and} \,\, y\in \square(n-l\,;\,c),\\
\bullet & g_{n\,;\,o}(x\circ^{i} \ast_{2\,;\,c}\,;\,1)=g_{n-1\,;\,o}(x\,;\,1)\circ^{i} \ast_{2\,;\,c} & \text{for} \,\, x\in \square(n-1\,;\,o) \,\, \text{and}\,\, 1\leq i \leq n-2,\\
\bullet & g_{n\,;\,o}(x\circ^{n-1} \ast_{2\,;\,o}\,;\,1)=g_{n-1\,;\,o}(x\,;\,1)\circ^{n-1} \ast_{2\,;\,o} & \text{for}\,\, x\in \square(n-1\,;\,o),\\
\bullet & g_{n\,;\,o}\big( \ast_{2\,;\,o}(x\,;\,y)\,;\,1\big) =\ast_{2\,;\,o}\big( g_{l\,;\,c}(x\,;\,1)\,;\,g_{n-l\,;\,o}(y\,;\,1)\big) & \text{for}\,\, x\in \square(l\,;\,c) \,\,\text{and}\,\, y\in \square(n-l;o)
\end{array} 
$$
with the boundary conditions: $g_{n\,;\,c}(x\,;\,0)=\eta(\ast_{n\,;\,c})$ for $x\in \square(n\,;\,c)$.\\
This model for the space of relative loops is denoted by $\Omega \big( Bimod_{As_{>0}}(\square_{c} \,;\, M_{c})\,;\, Bimod_{\sc0}(\square \,;\, M)\big)$.
\end{defi}

\begin{thm}\label{mainthm}
If $M$ is an $\sc0$-bimodule endowed with  a map of $\sc0$-bimodules $\eta:\tc\rightarrow M$ then
$$
sTot(M_{o})\simeq \Omega \big( Bimod^{h}_{As_{>0}}(As_{>0} \,;\, M_{c})\,;\, Bimod_{\sc0}(\sc0 \,;\, M)\big).
$$
\end{thm}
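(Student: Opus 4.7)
The plan is to construct an explicit homotopy equivalence between $sTot(M_o)$ and the explicit model $\Omega\bigl(Bimod_{As_{>0}}(\square_c\,;\,M_c)\,;\,Bimod_{\sc0}(\square\,;\,M)\bigr)$ of Definition \ref{Defifi}, where $\square$ is the cofibrant replacement of $\sc0$ from Proposition \ref{Rbimodule}. Unpacked, a relative loop $g$ consists, for each $n > 0$, of a path $g_{n,c}: \square(n,c) \times [0,1] \to M(n,c)$ starting at the constant map $\eta(\ast_{n,c})$ at $t=0$, together with a cube map $g_{n,o}: \square(n,o) \times \{1\} \to M(n,o)$ bimodule-compatible with the endpoint $g_{n,c}(-,1)$. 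A point in $sTot(M_o)$ is a natural transformation $f = \{f_n : \Delta^n \to M(n+1,o)\}_{n \geq 0}$.

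I would first construct a map $\Phi : sTot(M_o) \to \Omega(\ldots)$ by taking $g_{n,c}$ to be the constant path at $\eta(\ast_{n,c})$, which trivially satisfies the closed-color bimodule relations since $\eta$ is an $\sc0$-bimodule map, and defining $g_{n,o} : [0,1]^{n-1} \to M(n,o)$ as the extension of $f_{n-1}$ from the standard simplex $\Delta^{n-1} \subset [0,1]^{n-1}$ to the full cube forced by the bimodule relations of Definition \ref{Defifi}: the face $s_l = 1$ imposes $g_{n,o}(s_1, \ldots, s_{l-1}, 1, s_{l+1}, \ldots) = \ast_{2,o}\bigl(\eta(\ast_{l,c})\,;\,g_{n-l,o}(s_{l+1}, \ldots)\bigr)$, while the faces $s_i = 0$ correspond to the $\circ^i \ast_{2,c}$ or $\circ^{n-1} \ast_{2,o}$ operations. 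The semi-cosimplicial identities for $f$ are precisely what guarantees consistency of this extension across internal cell boundaries of the cube, where different orderings of the coordinates can be reduced to the standard simplex by distinct sequences of face operations. In the reverse direction, I would construct $\Psi : \Omega(\ldots) \to sTot(M_o)$ by combining, for each $n$, the restriction of $g_{n+1,o}(-,1)$ to the standard simplex $\Delta^n$ with a reparametrization involving the closed-color path $g_{n',c}(-,t)$; the extra $[0,1]$ parameter in $g_{n,c}$ is precisely what enables the boundary values of $\Psi(g)_n$ to match the semi-cosimplicial cofaces $d^i$, which involve the operadic unit $\eta(\ast_{1,c})$ rather than the possibly different endpoint $g_{n,c}(-,1)$.

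The composite $\Psi \circ \Phi$ is the identity on $sTot(M_o)$ by direct check: with $g_{n,c}$ constant, the reparametrizations are trivial and the restriction of the extension recovers $f$. For the reverse composite $\Phi \circ \Psi \simeq \mathrm{id}$, I would build a two-parameter deformation that simultaneously contracts the closed-color path $g_{n,c}(-,t)$ to the constant at $\eta$ via $g^u_{n,c}(x,t) = g_{n,c}\bigl(x,(1-u)t\bigr)$ and adjusts the open cube data $g^u_{n,o}$ by transporting $g_{n,o}$ along the contracting portion of the closed path, using the left $M_c$-module structure on $M_o$ (cf. Proposition \ref{propSC}). The main obstacle is verifying that this simultaneous deformation is continuous and preserves the bimodule-compatibility of Definition \ref{Defifi} at every intermediate value $u \in [0,1]$: the $\ast_{2,o}$-compatibility condition at $t=1$ mixes closed and open data, so adjusting one requires a matching adjustment of the other. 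The verification reduces to a combinatorial check that the transport via the left $M_c$-action respects the generating bimodule relations of $\sc0$ from Definition \ref{AM}, which in turn follows from the associativity and unit axioms of the $\sc0$-bimodule structure on $M$.
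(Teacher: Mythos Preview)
Your construction of $\Phi$ has a genuine gap. You claim that the bimodule relations of Definition~\ref{Defifi} ``force'' the extension of $f_{n-1}:\Delta^{n-1}\to M(n,o)$ to a map $g_{n,o}:[0,1]^{n-1}\to M(n,o)$, but those relations constrain $g_{n,o}$ only on the boundary faces $\{s_i=0\}$ and $\{s_i=1\}$ of the cube. For $n\geq 3$ the open cube $(0,1)^{n-1}$ strictly contains the interior of the standard simplex $\{s_1\leq\cdots\leq s_{n-1}\}$, and nothing in your proposal specifies $g_{n,o}$ on the complementary region (e.g.\ at a point with $0<s_2<s_1<1$). Your remark about ``different orderings of the coordinates being reduced to the standard simplex by face operations'' does not work: the $\sc0$-bimodule face operations on $\square$ insert a $0$ or a $1$ at a fixed position, reducing dimension; they do not permute coordinates or relate the $(n-1)!$ simplices that tile the cube. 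Your $\Psi$ is similarly underspecified: the restriction of $g_{n+1,o}$ to $\Delta^n$ does not satisfy the cosimplicial identity for $d^0$ (the face $s_1=0$ of the cube gives $g_{n,o}\circ^1\ast_{2,c}$, not $\ast_{2,o}(\eta(\ast_{1,c});g_{n,o})$), and ``a reparametrization involving the closed-colour path'' is not explained.

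The paper avoids both problems by working in the other direction and inserting an intermediate object. It introduces the $\sc0$-bimodule $M^{\ast}$ obtained by collapsing the closed part of $M$ to the point $\eta(\ast_{n,c})$, and proves the theorem as the concatenation of two weak equivalences
\[
sTot(M_o)\ \simeq\ Bimod^h_{\sc0}(\sc0;M^{\ast})\ \simeq\ \Omega\bigl(Bimod^h_{As_{>0}}(As_{>0};M_c);Bimod^h_{\sc0}(\sc0;M)\bigr).
\]
For the first, the paper observes that a map $g\in Bimod_{\sc0}(\square;M^{\ast})$ has $g_{n,o}$ factoring through a quotient $\tilde\square(n-1)=[0,1]^{n-1}/\!\sim$ (collapse everything before the last coordinate equal to $1$), shows that $\tilde\square$ is another cofibrant replacement of $As$ in $Ibimod_{As_{>0}}$, and compares the two mapping spaces via towers of fibrations with identical fibers. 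The point is that the natural map goes from the cube to $\tilde\square$ (a quotient), not from the simplex to the cube (an extension); the comparison with $sTot(M_o)=Ibimod_{As_{>0}}(\Delta;M_o)$ is then abstract, via cofibrancy. For the second equivalence the paper shows your image $\{g_{n,c}\equiv\eta\}$ is a deformation retract of the relative loop space, again via towers of fibrations.
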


\begin{proof}
It is a consequence of Proposition \ref{Lmm1TF} and Proposition \ref{Lmm2TF}.
\end{proof}

\begin{notat}
Let $M$ be an $\sc0$-bimodule endowed with a map $\eta:\tc\rightarrow M$. The $\{o\,;\,c\}$-sequence $M^{\ast}$ given by
\vspace{-0.3cm} 
\begin{center}
$M^{\ast}(n\,;\,c)=\eta(\ast_{n\,;\,c})\,\,\,$ for $\,\,n\geq 0$, $\,\,\,\,\,\,\,\,M^{\ast}(n\,;\,o)=M(n\,;\,o)\,\,\,$ for $n>0$
\end{center}
and the empty set otherwise, inherits from $M$ an $\sc0$-bimodule structure  with a map $\eta:\tc\rightarrow M^{\ast}$. 
\end{notat}

\begin{pro}\label{Lmm1TF}
The space $sTot(M_{o})$ is weakly equivalent to $Bimod_{\sc0}^{h}(\sc0\,;\, M^{\ast})$. 
\end{pro}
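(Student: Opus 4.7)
The plan follows the strategy of the proof of Theorem \ref{coro1}, adapted from infinitesimal bimodules to bimodules. By Proposition \ref{Rbimodule}, the $\sc0$-bimodule $\square$ is a cofibrant replacement of $\sc0$ in $Bimod_{\sc0}$, and every bimodule is fibrant in this model structure, so Definition \ref{Remarque} yields $Bimod^{h}_{\sc0}(\sc0;M^{\ast}) \simeq Bimod_{\sc0}(\square;M^{\ast})$. It is therefore enough to produce a weak equivalence $Bimod_{\sc0}(\square;M^{\ast}) \simeq sTot(M_{o})$.

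I would first describe $Bimod_{\sc0}(\square;M^{\ast})$ explicitly. Since $M^{\ast}(n;c) = \{\eta(\ast_{n;c})\}$ is a one-point space, the closed component of any bimodule map $f:\square \to M^{\ast}$ is forced, and $f$ is determined entirely by the open family $\phi_{n} := f_{n;o} : [0,1]^{n-1} \to M_{o}^{n-1}$. Reading off the bimodule structure of $\square$ from Proposition \ref{Rbimodule} and the semi-cosimplicial structure on $M_{o}$ from Proposition \ref{SCwbimodule}, the compatibility of $f$ amounts to two kinds of boundary conditions on $\phi_{n}$: on the face $t_{i}=0$ (for $1 \leq i \leq n-1$), the map $\phi_{n}$ coincides with $d^{i}\phi_{n-1}$; and on the face $t_{l}=1$ (for $1 \leq l \leq n-1$), the map $\phi_{n}$ is independent of the first $l-1$ coordinates and equals $(d^{0})^{l}\phi_{n-l}$ on the remaining ones. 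The latter identity uses $\ast_{2;o}(\eta(\ast_{l;c});-) = (d^{0})^{l}$, which follows by iterated bimodule associativity from the equality $\eta(\ast_{l;c}) = \ast_{l;c}(\eta(\ast_{1;c}),\ldots,\eta(\ast_{1;c}))$.

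Finally, I would construct an explicit deformation retract of $Bimod_{\sc0}(\square;M^{\ast})$ onto a subspace naturally identified with $sTot(M_{o})$, built inductively over the arity filtration $\square_{N}$ from Proposition \ref{Rbimodule}. At each pushout step, Lemma \ref{lemme2} identifies the data added at that stage with continuous maps from a cube, and the convexity of $[0,1]^{n-1}$ allows a straight-line homotopy contracting the cube onto a simplicial slice while preserving the specified boundary data. The main obstacle is coherence across arities: the homotopy at each stage must simultaneously preserve the cofacial identifications on the $t_{i}=0$ faces (which, in the limit, should yield the cosimplicial identities for a natural transformation $\Delta^{\bullet} \to M_{o}^{\bullet}$) and the iterated $(d^{0})^{l}$ identifications on the $t_{l}=1$ faces (already fixed at lower arities). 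This is feasible because the $t_{l}=1$ data is constant in $l-1$ coordinates, hence genuinely lower-dimensional, so the contraction extends cell by cell through the adjunction $B_{\sc0}(-) \dashv U$, analogously to the $t \mapsto tu + (1-u)$ contraction used in the proof of Theorem \ref{coro1}.
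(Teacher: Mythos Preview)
Your unpacking of $Bimod_{\sc0}(\square;M^{\ast})$ is correct and matches the paper's starting point: a bimodule map is determined by the open family $\phi_{n}:[0,1]^{n-1}\to M_{o}^{n-1}$ with exactly the face conditions you list. The gap is in the final paragraph. There is no ``simplicial slice'' of $[0,1]^{n-1}$ onto which a straight-line homotopy can retract while respecting all the boundary data. The analogy with Theorem \ref{coro1} is misleading: there $\deltaII(n;o)=\Delta^{n-1}\times[0,1]$ splits as a product, and the retraction collapses the extra interval factor. Here the redundant faces $t_{l}=1$ for $l\geq 2$ carry data that is constant in the first $l-1$ coordinates, so each such face must collapse to something of codimension $l-1$, not $1$; a linear retraction of the cube onto an embedded simplex cannot do this while simultaneously fixing all the $t_{i}=0$ faces and the $t_{1}=1$ face (which carries $d^{0}$). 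Concretely, already for $n=3$ the three ``good'' faces $t_{1}=0$, $t_{2}=0$, $t_{1}=1$ of the square are three sides of a square, not the boundary of an embedded triangle, and the fourth side $t_{2}=1$ must go to a single point.

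The paper takes a genuinely different route. Rather than looking for a simplex inside the cube, it passes to the quotient $\tilde{\square}(n)=[0,1]^{n}/\!\sim$ that collapses precisely the redundant $t_{l}=1$ data, and proves that the resulting sequence $\tilde{\square}$ is a cofibrant replacement of $As$ in $Ibimod_{As_{>0}}$ (via an attaching-cell filtration and Smale's theorem for the contractibility of the fibres of the quotient map). The assignment $g\mapsto\tilde{g}$ defines $\xi:Bimod_{\sc0}(\square;M^{\ast})\to Ibimod_{As_{>0}}(\tilde{\square};M_{o})$, and $\xi$ is shown to be a weak equivalence by comparing two towers of fibrations arity by arity, using Lemma \ref{lemme2} to identify the fibres on both sides with the same relative mapping space $Top^{\,g_{k+1;o}}\big(([0,1]^{k},\partial[0,1]^{k});M(k+1;o)\big)$. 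The equivalence with $sTot(M_{o})$ then comes from the fact that $\tilde{\square}$ and $\Delta$ are both cofibrant models of $As$, so it is a zigzag through the model structure rather than an explicit retraction. Your face analysis is exactly what makes the quotient $\tilde{\square}$ the right object; what is missing is the recognition that one must pass to a quotient, not a subspace.
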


\begin{proof}
As seen in the first section $sTot(M_{o})\simeq Ibimod^{h}_{As_{>0}}(As\,;\,M_{o})$ using the structure $(2)$. The first step of the proof consists in building a cofibrant replacement $\tilde{\square}$ of $As$ in the category of infinitesimal bimodule over $As_{>0}$ so that there exists a map $\xi: Bimod_{\sc0}(\square\,;\,M^{\ast})\rightarrow Ibimod_{As_{>0}}(\tilde{\square}\,;\,M_{o})$.   
Let us recall that a point $g\in Bimod_{\sc0}(\square \,;\, M^{\ast})$ is described by:
\begin{center}
$
\left\{
\begin{array}{llll}
g_{n\,;\,c}: & \square(n\,;\,c)\rightarrow M^{\ast}(n\,;\,c); & x\mapsto \eta(\ast_{n\,;\,c}), & $ for $ n>0, \\ 
g_{n\,;\,o}: & \square(n\,;\,o)\rightarrow M^{\ast}(n\,;\,o), &  & $ for $ n>0.
\end{array} 
\right.
$
\end{center}
satisfying:
$$
\begin{array}{lllllll}
\bullet & g_{n\,;\,o}(x\, \circ^{i}\,\ast_{2\,;\,c}) & = & g_{n-1\,;\,o}(x)\, \circ^{i}\,\ast_{2\,;\,c}\, , & & &  \text{for}\,\,  x\in\square(n-1\,;\,o) \,\, \text{and}\,\,  i\neq n-1, \\ 
\bullet & g_{n\,;\,o}(x\, \circ^{n-1}\,\ast_{2\,;\,o}) & = & g_{n-1\,;\,o}(x)\, \circ^{n-1}\,\ast_{2\,;\,o}\, , & & &  \text{for}\,\, x\in\square(n-1\,;\,o), \\ 
\bullet & g_{n\,;\,o}\big( \ast_{2\,;\,o}(x\,;\,y) \big) & = & \ast_{2\,;\,o}\big( g_{l\,;\,c}(x)\,;\, g_{n-l\,;\,o}(y)\big) & = & \ast_{2\,;\,o}\big( \eta(\ast_{l\,;\,c})\,;\, g_{n-l\,;\,o}(y)\big)\, , &  \text{for}\,\, x\in\square(l\,;\,c) \,\,\text{and}\,\, y\in \square(n-l\,;\,o).
\end{array} 
$$
Define $\sim$ to be the equivalence relation on $[0\,,\,1]^{n}$ generated by:
\begin{center}
$(t_{1},\ldots,t_{n})\sim (t'_{1},\ldots,t'_{n})\,\, \Leftrightarrow\,\,\exists i\in \{1,\ldots,n\}$ such that 
$
\left\{ \begin{array}{ll}
\bullet\, t_{i}=t'_{i}=1 &  \\ 
\bullet\, t_{j}=t'_{j} & $ for $ j>i
\end{array}\right. 
$.
\end{center}
We denote by $\tilde{\square}$ the sequence $\{ \tilde{\square}(n)=[0\,,\,1]^{n}/\sim\}_{n\geq 0}$.\\ The map $g$ induces a sequence map $\tilde{g}:=\{\tilde{g}_{n+1}:\tilde{\square}(n)\rightarrow M^{\ast}(n+1\,;\,o)=M^{n}_{o}\}_{n\geq 0}$. Indeed if $(t_{1},\ldots,t_{n})\sim (t'_{1},\ldots,t'_{n})$ then there exists $i$ such that $t_{i}=t'_{i}=1$ and $t_{j}=t'_{j}$ for $j>i$. So the following equalities hold:
\begin{center}$
\begin{array}{lllll}
g_{n+1\,;\,o}(t_{1},\ldots,t_{n}) & = & g_{n+1\,;\,o}(t_{1},\ldots,t_{i-1},1,t_{i+1},\ldots, t_{n}) & = & g_{n+1\,;\,o}\big( \ast_{2\,;\,o}\,\big( (t_{1},\ldots,t_{i-1}) \,;\, (1,t_{i+1},\ldots, t_{n})\big) \big) \\ 
 & = & \ast_{2\,;\,o}\,\big( g_{i\,;\,c}(t_{1},\ldots,t_{i-1}) \,;\, g_{n-i\,;\,o}(t_{i+1},\ldots, t_{n})\big) & = & g_{n+1\,;\,o}(t'_{1},\ldots,t'_{n}) 
\end{array} 
$\end{center}

Let us prove that $\tilde{\square}$ is a cofibrant replacement of $As$ as an $As_{>0}$-infinitesimal bimodule. The infinitesimal bimodule structure is given by:
$$
\begin{array}{lllllll}
i)&-\,\circ^{i}\,\ast_{2} & : & \tilde{\square}(n)\rightarrow \tilde{\square}(n+1) & ; & (t_{1},\ldots,t_{n})\mapsto (t_{1},\ldots,t_{i-1},0,t_{i+1},\ldots,t_{n}), & \text{for}\,\, 1\leq i \leq n,  \\ 
ii)&\ast_{2}\,\circ_{1}\,- & : & \tilde{\square}(n)\rightarrow \tilde{\square}(n+1) & ; & (t_{1},\ldots,t_{n})\mapsto (t_{1},\ldots,t_{n},0), & \\ 
iii)&\ast_{2}\,\circ_{2}\,- & : & \tilde{\square}(n)\rightarrow \tilde{\square}(n+1) & ; & (t_{1},\ldots,t_{n})\mapsto (1,t_{1},\ldots,t_{n}). & 
\end{array} 
$$
This structure satisfies the infinitesimal bimodule axioms over $As_{>0}$ and it makes $\tilde{g}$ into an $As_{>0}$-infinitesimal bimodule map. Furthermore $\tilde{\square}$ is a cofibrant replacement of the $As_{>0}$-infinitesimal bimodule $As$:\\{}\\
\textbf{Cofibrant:} let $\tilde{\square}_{n}$ be the sub-$As_{>0}$-infinitesimal bimodule of $\tilde{\square}$ generated by $\{\tilde{\square}(i)\}_{i=0}^{n}$ for $n\in \mathbb{N}$. By convention  $\tilde{\square}_{-1}$ is the $As_{>0}$-infinitesimal bimodule $Ib_{As_{>0}}(\emptyset)$. Let us notice that the boundary of $\tilde{\square}(n)$ is determined by $\tilde{\square}(n-1)$ and its infinitesimal bimodule structure. Indeed  the map $[0\,,\,1]^{n}\rightarrow \tilde{\square}(n)$ preserves the boundary and by  definition a point in $\partial [0\,,\,1]^{n}$ has one of the following form:
\begin{center}
$(t_{1},\ldots,t_{l-1},0,t_{l+1},\ldots,t_{n})\,\,\,\,\,$ or $\,\,\,\,\,(t_{1},\ldots,t_{l-1},1,t_{l+1},\ldots,t_{n})$.
\end{center}
In the first case, the class of such a point lies in $\tilde{\square}_{n-1}$ by the axioms $(i)$ and $(ii)$. In the second case we have the  identification:
\begin{center}
$[(t_{1},\ldots,t_{l-1},1,t_{l+1},\ldots,t_{n})]=[(\underset{l}{\underbrace{1,\ldots,1}},t_{l+1},\ldots,t_{n})] = \ast_{2}\, \circ_{2}\, [(\underset{l-1}{\underbrace{1,\ldots,1}},t_{l+1},\ldots,t_{n})]$
\end{center}
Consequently $\tilde{\square}_{n}$ is obtained from $\tilde{\square}_{n-1}$ by the pushout diagram:
\begin{equation}
\xymatrix{
Ib_{As_{>0}}(\partial A) \ar[r] \ar[d]_{\tilde{q}} & Ib_{As_{>0}}(A) \ar[d] \\
\tilde{\square}_{n-1} \ar[r] & \tilde{\square}_{n}
}
\end{equation}
where $A$ is the sequence given by $A(n)=[0\,,\,1]^{n}$ and the empty set otherwise. The attaching map is the restriction of the quotient map $q:[0\,,\,1]^{n}\rightarrow [0\,,\,1]^{n}/\sim$ to the boundary. Moreover if $n\geq i$ then $\tilde{\square}_{n}(i)=\tilde{\square}(i)$ and the map $\partial A\rightarrow A$ is a cofibration. So $lim_{n}\,\tilde{\square}_{n}=\tilde{\square}$ and $\tilde{\square}$ is cofibrant.

This construction implies that $\tilde{\square}(m)$ is a $CW$-complex. Let us recall that if $A(n)=[0\,,\,1]^{n}$ and the empty set otherwise, then the points in $Ib_{As_{>0}}(A)(m)$ are the pairs $(t\,;\,x)$ with $x\in A(n)$ and $t$ a $\{c\}$-ptree satisfying:
\begin{equation}
\bullet \,\,\,t\,\,\text{has}\,\,m\,\,\text{leaves}\,\,\,\,\,\,\,\,\,\bullet \,\,\,\forall v\in V(t)\setminus \{p\},\,|v|>1 \,\,\,\,\,\,\,\,\,\bullet \,\,\, |p|=n.
\end{equation} 
We denote by $tr_{m}^{n}$ the number of $\{c\}$-ptrees satisfying Relation $(11)$. The space $\tilde{\square}_{0}(m)$ is the disjoint union of $tr_{m}^{0}$ points, that is, a $CW$-complex. Assume $\tilde{\square}_{n-1}(m)$ is a $CW$-complex  $\forall m\geq 0$.
For $m\leq n-1$, $\tilde{\square}_{n}(m)=\tilde{\square}_{n-1}(m)=\tilde{\square}(m)$ is a $CW$-complex.
The pushout $(10)$ implies that $\tilde{\square}_{n}(n)=\tilde{\square}(n)$ is a $CW$-complex. Finally, for $m>n$, the space $\tilde{\square}_{n}(m)$ is obtained from the $CW$-complex $\tilde{\square}_{n-1}(m)$ by attaching $tr_{m}^{n}$ cells of dimension $n$ according to the infinitesimal bimodule structure over $As_{>0}$, thus is a $CW$-complex.\\{}\\
\textbf{Contractible:} The map $q:[0\,,\,1]^{n}\rightarrow \tilde{\square}(n)$ is a continuous map between compact $CW$-complexes. Since the fiber of $q$ over a point $(t_{1},\ldots, t_{i-1},1,t_{i+1},\ldots,t_{n})$, with $t_{j}\neq 1$ for $j>i$, is homeomorphic to the contractible space $[0\,,\,1]^{i-1}$, the map $q$ is a weak equivalence  \cite[Main Theorem]{Smale}. Hence $\tilde{\square}(n)$ is contractible.\\


Since $\tilde{\square}$ is a cofibrant replacement of $As$ as an infinitesimal bimodule over $As_{>0}$, the semi-totalization $sTot(M_{o})$ is weakly equivalent to $Ibimod_{As_{>0}}(\tilde{\square}\,;\,M_{o})$ and we have a map:
\begin{center}
$
\xi: Bimod_{Act_{>0}}(\square\,;\,M^{\ast})\rightarrow Ibimod_{As_{>0}}(\tilde{\square}\,;\,M_{o})\,\,;\,\, g\mapsto \tilde{g}.
$
\end{center}
In order to prove that $\xi$ is a weak equivalence, we will introduce two towers of fibrations.
For $k\geq 0$, define $A_{k}$ and $B_{k}$ to be the subspaces :
$$A_{k}\subset \underset{\text{reduced to a point}}{\underbrace{\underset{i=1}{\overset{k+1}{\prod}}\, Top\big( \square(i\,;\,c)\,;\, M^{\ast}(i\,;\,c)\big)}}\times \underset{i=1}{\overset{k+1}{\prod}}\, Top\big( \square(i\,;\,o)\,;\,M^{\ast}(i\,;\,o)\big)\,\,\,\,\,\, \text{and} \,\,\,\,\,B_{k}\subset \underset{i=0}{\overset{k}{\prod}}\, Top\big( \tilde{\square}(i)\,;\, M^{i}_{o}\big)$$
with $A_{k}$ satisfying the $\sc0$-bimodule relations and $B_{k}$ the $As_{>0}$-infinitesimal bimodule relations. In other words $A_{k}$ and $B_{k}$ are respectively the spaces $Bimod_{\sc0}(\square_{k+1}\,;\,M^{\ast})$ and $Ibimod_{As_{>0}}(\tilde{\square}_{k}\,;\,M_{o})$ where $\square_{k+1}$ is the sub-$\sc0$-bimodule introduced in the proof of Proposition \ref{Rbimodule}. The projection:
$$
\underset{i=1}{\overset{k+1}{\prod}}\, Top\big( \tilde{\square}(i)\,;\,M_{o}^{i}\big)\,\rightarrow \, \underset{i=1}{\overset{k}{\prod}}\, Top\big( \tilde{\square}(i)\,;\,M_{o}^{i}\big)
$$
induces a map $B_{k+1}\rightarrow B_{k}$. From Lemma \ref{lemme1}, the following map is a fibration:
\begin{center}
$
Top\big( \tilde{\square}(k+1)\,;\,M_{o}^{k+1}\big)\,\rightarrow \,  Top\big( \partial\tilde{\square}(k+1)\,;\,M_{o}^{k+1}\big)
$
\end{center}
The space $B_{k+1}$ is obtained from $B_{k}$ by the pullback diagram:
\begin{center}
$
\xymatrix@R=0.5cm{
B_{k+1} \ar[r] \ar[d] & Top\big( \tilde{\square}(k+1)\,;\,M_{o}^{k+1}\big) \ar[d] \\
B_{k} \ar[r] & Top\big( \partial\tilde{\square}(k+1)\,;\,M_{o}^{k+1}\big)
}
$
\end{center}
Since the fibrations are preserved by pullbacks, $B_{k+1}\rightarrow B_{k}$  is a fibration. Similarly the next pullback square makes the map $A_{k+1}\rightarrow A_{k}$ induced by the projection into a fibration:
\begin{center}
$
\xymatrix@R=0.5cm{
A_{k+1} \ar[d] \ar[r] & Top\big( \square(k+2\,;\,c)\,;\,M^{\ast}(k+2\,;\,c)\big)\times Top\big( \square(k+2\,;\,o)\,;\,M^{\ast}(k+2\,;\,o)\big) \ar[d]\\
A_{k} \ar[r] & Top\big( \partial\square(k+2\,;\,c)\,;\,M^{\ast}(k+2\,;\,c)\big)\times Top\big( \partial\square(k+2\,;\,o)\,;\,M^{\ast}(k+2\,;\,o)\big)
}
$
\end{center}
So we consider the two towers of fibrations:
\begin{center}
$
\xymatrix@R=0.1cm{
A_{0} & A_{1} \ar[l] & \cdots \ar[l] & A_{k} \ar[l] & A_{k+1} \ar[l] & \cdots \ar[l] \\
B_{0} & B_{1} \ar[l] & \cdots \ar[l] & B_{k} \ar[l] & B_{k+1} \ar[l] & \cdots \ar[l]
}
$
\end{center}
so that:\\{}\\
$
\begin{array}{lllllll}
A_{\infty} & = & lim_{k}\, A_{k} & \simeq & holim_{k}\, A_{k} & \simeq & Bimod_{\sc0}(\square\,;\,M^{\ast}), \\ 
B_{\infty} & = & lim_{k}\, B_{k} & \simeq & holim_{k}\, B_{k} & \simeq & Ibimod_{As_{>0}}(\tilde{\square}\,;\,M_{o}).
\end{array} 
$\\{}\\
By restriction, the map $\xi$ induces an application between the two towers:
\begin{center}
$
\xymatrix@R=0.5cm{
A_{0} \ar[d]^{\xi_{0}} & A_{1} \ar[l] \ar[d]^{\xi_{1}} & \cdots \ar[l] & A_{k} \ar[l] \ar[d]^{\xi_{k}} & A_{k+1} \ar[l] \ar[d]^{\xi_{k+1}}& \cdots \ar[l] \\
B_{0} & B_{1} \ar[l] & \cdots \ar[l] & B_{k} \ar[l] & B_{k+1} \ar[l] & \cdots \ar[l]
}
$
\end{center}
with $\xi\,=\, lim_{k}\,\xi_{k}\, = \, holim_{k}\, \xi_{k}$. Consequently, $\xi$ is a weak equivalence if each $\xi_{k}$ is a weak equivalence. We will prove this result by induction on $k$:\\
$\bullet\,\,\,\xi_{0}$ and $\xi_{1}$ coincide with the identity. They are weak equivalences.\\
$\bullet\,\,$ Assume that $\xi_{k-1}$ is a weak equivalence. We consider the following diagram where $g$ is a point in $A_{k-1}$, $F_{A}$ is the fiber over $g$ and $F_{B}$ the fiber over $\xi_{k-1}(g)$. Since the two left horizontal arrows are fibrations, the map $\xi_{k}$ is a weak equivalence if the induced map $\xi_{g}$ is a weak equivalence.
$$
\xymatrix@R=0.6cm{
A_{k-1} \ar[d]^{\xi_{k-1}} & A_{k} \ar[l] \ar[d]^{\xi_{k}} & F_{A} \ar[l] \ar[d]^{\xi_{g}} \\
B_{k-1} & B_{k} \ar[l] & F_{B} \ar[l]
}
$$
From Lemma \ref{lemme2} the fiber $F_{A}$ is homeomorphic to the space $Top^{g_{k+1\,;\,o}}\big(\,\big( [0\,,\,1]^{k} \,;\,\partial [0\,,\,1]^{k}\big)\,;\, M(k+1\,;\,o)\big)$. Similarly $\tilde{\square}_{k}$ is obtained from $\tilde{\square}_{k-1}$ by the pushout diagram $(10)$. So the fiber  $F_{B}$ is homeomorpic to the space $Top^{\xi_{k-1}(g)_{k}\circ q}\big(\,\big( [0\,,\,1]^{k} \,;\,\partial [0\,,\,1]^{k}\big)\,;\,M(k+1\,;\,o)\big)$ and we have the commutative square:
$$
\xymatrix@R=0.6cm{
F_{A} \ar[r] \ar[d]_{\xi_{g}} &  Top^{g_{k+1\,;\,o}}\big(\,\big( [0\,,\,1]^{k} \,;\,\partial [0\,,\,1]^{k}\big)\,;\, M(k+1\,;\,o)\big) \ar@{=}[d]^{id} \\
F_{B} \ar[r] & Top^{\xi_{k-1}(g)_{k}\circ q}\big(\,\big( [0\,,\,1]^{k} \,;\,\partial [0\,,\,1]^{k}\big)\,;\,M(k+1\,;\,o)\big)
}
$$
Consequently $\xi_{k}$ is a weak equivalence.

\end{proof}

\begin{pro}\label{Lmm2TF}
The space $\Omega \big( Bimod^{h}_{As_{>0}}(As_{>0} \,;\, M_{c})\,;\, Bimod^{h}_{\sc0}(\sc0 \,;\, M)\big)$ is weakly equivalent to the space    
$Bimod^{h}_{\sc0}(\sc0\,;\, M^{\ast})$.
\end{pro}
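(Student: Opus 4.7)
The plan is to model both sides using the cofibrant replacement $\square$ of $\sc0$ in $Bimod_{\sc0}$ established in Proposition \ref{Rbimodule}. Since $\square_{c}$ is then a cofibrant replacement of $As_{>0}$ in $Bimod_{As_{>0}}$, the projection $p_{1}^{h}$ is represented at the point-set level by the restriction map $p:Bimod_{\sc0}(\square \,;\, M)\to Bimod_{As_{>0}}(\square_{c} \,;\, M_{c})$ sending a bimodule map to its closed part. The basepoint of the target is the composition $\square_{c}\to \sc0 \to \tc \overset{\eta}{\to} M_{c}$. I will (a) identify the strict fibre of $p$ over this basepoint with $Bimod_{\sc0}(\square\,;\,M^{\ast})$, (b) prove that $p$ is a Serre fibration, and (c) match the resulting homotopy fibre with the relative-loop model of Definition \ref{Defifi}.

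For (a), a bimodule map $g:\square\to M$ whose closed part coincides with the basepoint must satisfy $g_{n\,;\,c}(x)=\eta(\ast_{n\,;\,c})$ for every $x\in \square(n\,;\,c)$, so $g_{c}$ factors through $M^{\ast}_{c}$, and the bimodule compatibilities on $g_{o}$ reduce exactly to the conditions on an $\sc0$-bimodule map $\square \to M^{\ast}$; this yields the claimed homeomorphism. For (b), I would proceed along the filtration $\square=lim_{N}\,\square_{N}$ from the proof of Proposition \ref{Rbimodule}, where each step $\square_{N-1}\hookrightarrow \square_{N}$ is the attaching of a pair of free cells $A(N\,;\,c)=A(N\,;\,o)=[0\,,\,1]^{N-1}$. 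By Lemma \ref{lemme2} this produces at the level of mapping spaces a pullback square whose right-hand vertical is a fibration by Lemma \ref{lemme1}; since attaching the open-colour cell leaves the closed restriction untouched, the square is compatible with $p$, and an induction on $N$ followed by passage to the inverse limit of the resulting tower (in the spirit of the argument used in the proof of Proposition \ref{Lmm1TF}) shows that $p$ is a Serre fibration.

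For (c), a point in the standard path-space model of the homotopy fibre of $p$ over the basepoint is a pair $(g,\gamma)$ consisting of a bimodule map $g:\square\to M$ and a path $\gamma:[0\,,\,1]\to Bimod_{As_{>0}}(\square_{c}\,;\,M_{c})$ with $\gamma(0)=\eta$ and $\gamma(1)=g_{c}$. The adjoint assignment $\tilde{g}:\square\times I\to M$ defined by $\tilde{g}_{n\,;\,c}(x\,;\,t)=\gamma(t)(x)$ and $\tilde{g}_{n\,;\,o}(x\,;\,1)=g_{n\,;\,o}(x)$ satisfies precisely the five compatibility identities and the boundary condition $\tilde{g}_{n\,;\,c}(x\,;\,0)=\eta(\ast_{n\,;\,c})$ of Definition \ref{Defifi}, and this assignment is a homeomorphism onto the relative-loop model. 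Combining (a)--(c), since $p$ is a fibration its strict fibre is weakly equivalent to its homotopy fibre, yielding the desired weak equivalence. The main obstacle is step (b): the two-colour nature of the cell attachment requires careful bookkeeping to guarantee that at every stage of the tower the closed-part restriction is unaffected by the open-colour cell, so that Lemma \ref{lemme1} applies uniformly and the pullbacks of fibrations that encode $p$ remain fibrations.
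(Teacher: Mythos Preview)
Your proposal is correct and takes a genuinely different route from the paper. The paper never shows that $p$ is a fibration; instead it builds an explicit inclusion
\[
i:Bimod_{\sc0}(\square;M^{\ast})\hookrightarrow \Omega\big(Bimod_{As_{>0}}(\square_{c};M_{c});Bimod_{\sc0}(\square;M)\big)
\]
sending the closed part to the constant path at $\eta$, and proves that $i$ is a weak equivalence by comparing two towers $A_{k}$ and $C_{k}$ stage by stage: at each level the fibre of $C_{k}\to C_{k-1}$ deformation-retracts onto the fibre of $A_{k}\to A_{k-1}$ via an explicit homotopy that collapses $\square(k{+}1;c)\times[0,1]$ onto the union of its faces other than $\square(k{+}1;c)\times\{1\}$. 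Your argument is more conceptual: once $p$ is a fibration, the identification of strict and homotopy fibres is formal, and your step~(c) is just the adjunction unfolding Definition~\ref{Defifi}.

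One suggestion for step~(b), which removes the two-colour bookkeeping you flag as the obstacle: instead of the filtration $\square_{N}$ (which attaches a closed and an open cell simultaneously), observe that the $\{o;c\}$-sequence $\square_{c}$ with empty open part is already a sub-$\sc0$-bimodule of $\square$, and that $\square$ is obtained from it by attaching only the open cells $A(N;o)=[0,1]^{N-1}$ in increasing $N$. Each such attachment is a cofibration in $Bimod_{\sc0}$, so Lemmas~\ref{lemme1} and~\ref{lemme2} give a tower of fibrations whose inverse limit over its base is precisely $p$; hence $p$ is a Serre fibration. This refactoring makes your induction on $N$ straightforward. The trade-off is that the paper's hands-on argument never needs to argue that a limit of fibrations is a fibration, while yours avoids the explicit retracting homotopies.
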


\begin{proof}
In this proof $\square$ will serve as a cofibrant model of the $\sc0$-bimodule $\sc0$. We can consider $Bimod_{\sc0}(\square\,;\, M^{\ast})$ as a subspace of $\Omega \big( Bimod_{As_{>0}}(\square_{c} \,;\, M_{c})\,;\, Bimod_{\sc0}(\square \,;\, M)\big)$ through the inclusion:
$$
\begin{array}{cccc}
i\,: & Bimod_{\sc0}(\square\,;\, M^{\ast}) & \rightarrow & \Omega \big( Bimod_{As_{>0}}(\square_{c} \,;\, M_{c})\,;\, Bimod_{\sc0}(\square \,;\, M)\big) \\ 
 & g & \mapsto & 
 \left\{\begin{array}{cccc}
 \tilde{g}_{n\,;\,c}\,: & \square(n\,;\,c)\times [0\,,\,1] \rightarrow M(n\,;\,c) & ; & (x\,;\,t)\mapsto \eta(\ast_{n\,;\,c}) \\ 
 \tilde{g}_{n\,;\,o}\,: & \square(n\,;\,o)\times \{1\} \rightarrow M(n\,;\,o)  & ; &  (x\,;\,1)\mapsto g_{n\,;\,o}(x)
 \end{array}\right. 
\end{array} 
$$
In order to show that $i$ is a weak equivalence, we introduce two towers of fibrations. One of them is the tower $A_{k}$ of Proposition \ref{Lmm1TF}. The second one is defined by:
$$
C_{k}\subset \underset{i=1}{\overset{k+1}{\prod}}\, Top\big( \square(i\,;\,c)\times [0\,,\,1]\,;\, M(i\,;\,c)\big)\times \underset{i=1}{\overset{k+1}{\prod}}\, Top\big( \square(i\,;\,o)\,;\,M(i\,;\,o)\big)
$$
satisfying the relations of Definition \ref{Defifi}. The map $C_{k+1}\rightarrow C_{k}$ induced by the projection is a fibration due to Lemma \ref{lemme1} and the following pullback diagram:


$$
\xymatrix{
C_{k+1} \ar[r] \ar[d] & Top\big( \square(k+2\,;\,c)\times [0\,,\,1]\,;\,M(k+2\,;\,c)\big) 
\times Top\big(\square(k+2\,;\,o)\,;\,M(k+2\,;\,o)\big) \ar[d] \\
C_{k} \ar[r] & Top\big(\partial'(\square(k+2\,;\,c)\times [0\,,\,1]) \,;\,M(k+2\,;\,c)\big) \times Top\big(\partial\square(k+2\,;\,o)\,;\,M(k+2\,;\,o)\big)
}
$$
where 
$$\partial'(\square(k+2\,;\,c)\times [0\,,\,1])=\square(k+2\,;\,c)\times \{0\} \cup \partial \square(k+2\,;\,c)
\times [0\,,\,1].$$
The restriction of the inclusion $i$ induces a map between the two towers:
\begin{center}
$
\xymatrix{
A_{0} \ar[d]^{i_{0}} & A_{1} \ar[l] \ar[d]^{i_{1}} & \cdots \ar[l] & A_{k} \ar[l] \ar[d]^{i_{k}} & A_{k+1} \ar[l] \ar[d]^{i_{k+1}}& \cdots \ar[l] \\
C_{0} & C_{1} \ar[l] & \cdots \ar[l] & C_{k} \ar[l] & C_{k+1} \ar[l] & \cdots \ar[l]
}
$
\end{center}
We will prove that $i$ is a weak equivalence by induction on $k$:\\
$\bullet\,\,$ If $k=0$, a point in $C_{0}$ is a pair $(g_{1\,;\,c}\,;\,g_{1\,;\,o})$ and the points in the image of $i_{0}$ are the pairs satisfying:
$$
g_{1\,;\,c}:\square(1\,;\,c)\times [0\,,\,1]\rightarrow M(1\,;\,c)\,\,;\,\, (\ast\,;\,t)\mapsto \eta(\ast_{1\,;\,c})
$$
Since $g_{1\,;\,c}(\ast\,;\,0)=\eta(\ast_{1\,;\,c})$ for any pair in $C_{0}$, the inclusion $i_{0}$ induces the following deformation retract:
\begin{center}
$
\begin{array}{cccl}
H\,: & C_{0}\times[0\,,\,1] & \rightarrow & C_{0} \\ 
 & y=\big( (g_{1\,;\,c}\,;\,g_{1\,;\,o})\,;\,t_{1}\big) & \mapsto & 
 \left\{
 \begin{array}{l}
 H(y)_{1\,;\,c}(\ast\,;\,t)=g_{1\,;\,c}\big(\ast\,;\,t(1-t_{1})\big) \\ 
 H(y)_{1\,;\,o}(\ast\,;\,1)=g_{1\,;\,o}(\ast\,;\,1)
 \end{array} 
 \right.
\end{array} 
$
\end{center}
$\bullet\,\,$ From now on we assume that $i_{k-1}$ is a weak equivalence for $k\geq 1$. We consider the following diagram where $g$ is a point in $A_{k-1}$, $F_{A}$ is the fiber over $g$ and $F_{C}$ the fiber over $i_{k-1}(g)$. Since the two left horizontal arrows are fibrations, the map $i_{k}$ is a weak equivalence if the induced map $i_{g}$ is a weak equivalence. 
$$
\xymatrix{
A_{k-1} \ar[d]^{i_{k-1}} & A_{k} \ar[l] \ar[d]^{i_{k}} & F_{A} \ar[l] \ar[d]^{i_{g}} \\
C_{k-1} & C_{k} \ar[l] & F_{C} \ar[l]
}
$$
A point in $F_{C}$ is defined by a pair $(g_{k+1\,;\,c}\,;\,g_{k+1\,;\,o})$ satisfying the relations of Definition \ref{Defifi}. Since $g_{k+1\,;\,c}$ is in the fiber over $i_{k-1}(g)$, the map sends all the faces of $\square(k+1\,;\,c)\times [0\,,\,1]$ on $\eta(\ast_{k+1\,;\,c})$ except the face $\square(k+1\,;\,c)\times \{1\}$. Furthermore they are no interaction between $g_{k+1\,;\,c}$ and $g_{k+1\,;\,o}$.\\
On the other hand the points in the image of $i_{g}$ coincide with the pair $(g_{k+1\,;\,c}\,;\,g_{k+1\,;\,o})$ such that:
$$
g_{k+1\,;\,c}:\square(k+1\,;\,c)\times [0\,,\,1]\rightarrow M(k+1\,;\,c)\,\,;\,\, (x\,;\,t)\rightarrow \eta(\ast_{k+1\,;\,c}).
$$ 
In order to prove that $i_{g}$ induces a deformation retract, we introduce the homotopy (also describe in \cite[Proposition 0.16]{MR1867354}) $H:\big( \square(k+1\,;\,c)\times [0\,,\,1]\big) \times [0\,,\,1]\rightarrow \square(k+1\,;\,c)\times [0\,,\,1]$ illustrated by the following picture: 
\begin{figure}[!h]
\begin{center}
\includegraphics[scale=0.4]{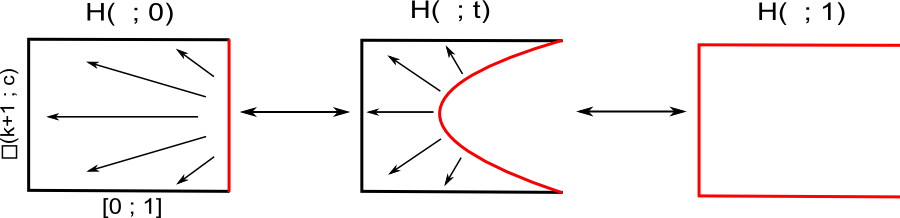}
\end{center}
\end{figure}\\
In other words, the points in the image of $i_{g}$ coincide with the pairs such that:
\begin{center}
$
g_{k+1\,;\,c}(x\,;\,t)=g_{k+1\,;\,c}\big( H\big(\,(x\,;\,t)\,;\,1\big)\,\big),
\,\,\,\,$ for $ x\in \square(k+1\,;\,c)$ and $t\in[0\,,\,1]$.
\end{center}
Finally the deformation retract is given by:
\begin{center}
$
\begin{array}{lcll}
H_{2}\,: & F_{C}\times [0\,,\,1] & \rightarrow & F_{C} \\ 
 & y=\big( (g_{k+1\,;\,c}\,;\,g_{k+1\,;\,o})\,;\,t_{1}\big) & \mapsto & 
 \left\{
 \begin{array}{lll}
 H_{2}(y)_{k+1\,;\,c}(x\,;\,t)  =  g_{k+1\,;\,c}\big( H\big((x\,;\,t)\,;\,t_{1}\big) \big) & $ for $ & x\in \square(k+1\,;\,c)$ and $ t\in [0\,,\,1], \\ 
 H_{2}(y)_{k+1\,;\,o}(x\,;\,1)  =  g_{k+1\,;\,o}(x\,;\,1) & $ for $ & x\in \square(k+1\,;\,o).
 \end{array} 
 \right.
\end{array} 
$
\end{center}
The space $\Omega \big( Bimod_{As_{>0}}(\square_{c} \,;\, M_{c})\,;\, Bimod_{\sc0}(\square \,;\, M)\big)$ is weakly equivalent to $Bimod_{\sc0}(\square\,;\, M^{\ast})$.
\end{proof}


\section{Double relative delooping: a particular case}
First of all we recall that for any pointed continuous map $f:A\rightarrow X$, the homotopy fiber $hofib(f)$ and the loop space $\Omega X$ based on $\ast$ are weakly equivalent to the pullback diagram $(I)$ and $(II)$:
\begin{center}
$
\xymatrix@R=0.2cm{
 & Top\big( [0\,,\,1]\,;\,X\big) \ar[dd]_{(ev_{0}\,;\,ev_{1})} & \\
 & & (I) \\
 \ast \times A \ar[r]_{id\times f} & X\times X & 
}\,\,\,\,\,\,\,\,\,\,\,\,\,\,\,\,\,\,\,\,\,\,\,\,\,\,\,\,\,\,
\xymatrix@R=0.2cm{
 & Top\big( [0\,,\,1]\,;\,X\big) \ar[dd]_{(ev_{0}\,;\,ev_{1})} & \\
 & & (II) \\
 \ast \times \ast \ar[r] & X\times X & 
}
$
\end{center}
By the double loop space $\Omega^{2}(X\,;\,A)$ we mean the loop space of the homotopy fiber $hofib(f)$. Since finite colimits commute, the double loop space can also be defined by the homotopy fiber of the continuous map $\Omega f$.

From now on, let $O$ be a multiplicative operad, that is, there exists an operad map $\alpha : As\rightarrow O$. Let $B$ be an $O$-bimodule equipped with an $O$-bimodule map $\beta : O\rightarrow B$. If we assume that $B(0)\simeq \ast$ we know from \cite[Theorem $6.2$]{Tourtchine:arXiv1012.5957} and the $As_{>0}$-bimodule map $\beta\,\circ\,\alpha :As\rightarrow B$ that $sTot(B)$ is weakly equivalent to the loop space $\Omega Bimod_{As_{>0}}^{h}(As_{>0}\,;\,B)$. Since $B$ is not an operad we can not expect that its semi-totalization has the homotopy type of a double loop space. However we will prove that $Bimod_{As_{>0}}^{h}(As_{>0}\,;\,B)$ has the homotopy type of a relative loop space by building an $\{o\,;\,c\}$-operad $X$ from the pair $(O\,;\,B)$:
\begin{equation}
X(n\,;\,c)=O(n),\,\,\,\,\, \text{for} \,\,n\geq 0\,\,\,; \,\,\,X(n\,;\,o)=B(n-1),\,\,\,\,\, \text{for}\,\, n>0, 
\end{equation} 
and the empty set otherwise. The operadic structure is defined by:
\begin{center}
$\begin{array}{llll}
\circ_{i}\,:\, X(n\,;\,c)\times X(m\,;\,c)\rightarrow X(n+m-1\,;\,c) & ; & (x\,;\,y)\mapsto x\,\circ_{i}\,y & $ using the operadic structure of  $O, \\ 
\circ_{i}\,:\, X(n\,;\,o)\times X(m\,;\,c)\rightarrow X(n+m-1\,;\,o) & ; & (x\,;\,y)\mapsto x\,\circ^{i}\,y & $ using the right $O$-bimodule structure of $ B, \\ 
\circ_{n}\,:\, X(n\,;\,o)\times X(m\,;\,o)\rightarrow X(n+m-1\,;\,o) & ; & (x\,;\,y)\mapsto \alpha(\ast_{2})(x\,;\,y) & $ using the left $O$-bimodule structure of $ B.
\end{array}$
\end{center}
The $\{o\,;\,c\}$-operad $X$ is endowed with a map of operads $\eta:\tc\rightarrow X\,\,;\,\,
\left\{
\begin{array}{l}
\eta(\ast_{i\,;\,c})=\alpha(\ast_{i}) \\ 
\eta(\ast_{i\,;\,o})=\beta\,\circ\,\alpha (\ast_{i-1})
\end{array} 
\right.
$.\\
The operadic axioms are satisfied except the unit axiom. This axiom holds under the assumption:
\begin{equation}
\alpha(\ast_{2})\big(\beta\circ\alpha(\ast_{0})\,;\,x\big)=\alpha(\ast_{2})\big(x\,;\,\beta\circ\alpha(\ast_{0})\big)=x \,\,\,\text{for}\,\,\, x\in X(m\,;\,o).
\end{equation}

\begin{thm}
Under Assumption $(13)$, the relative loop space $\Omega\big(  Operad^{h}(As_{>0}\,;\,O) \,;\, Operad_{\{o\,;\,c\}}^{h}(\sc0\,;\,X)\big)$ is weakly equivalent to $Bimod_{As_{>0}}^{h}(As_{>0}\,;\,B)$.    
\end{thm}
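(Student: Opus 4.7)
The proof follows the same architecture as Theorem \ref{mainthm}: construct an explicit ``rigidified'' model of the relative loop space, produce a natural map into $Bimod^h_{As_{>0}}(As_{>0};B)$, and use a tower-of-fibrations argument to show this map is a weak equivalence. Playing the role that $M^\ast$ played in Proposition \ref{Lmm1TF}, I would introduce an $\{o;c\}$-operad $X^\ast$ obtained from $X$ by collapsing each space $X(n;c)=O(n)$ onto its distinguished point $\alpha(\ast_n)$, leaving $X^\ast(n;o)=B(n-1)$ unchanged. Its operad structure on the $o$-colour still encodes the bimodule $B$ through $\alpha$. By a deformation-retract argument mimicking Proposition \ref{Lmm2TF}, one first shows
\[
\Omega\big(Operad^h(As_{>0};O)\,;\,Operad^h_{\{o;c\}}(\sc0;X)\big)\simeq Operad^h_{\{o;c\}}(\sc0;X^\ast),
\]
by rigidifying the $c$-part of a ``relative loop'' so that it is strictly equal to $\alpha$, thereby moving all the nontrivial content into a strict operad map landing in $X^\ast$.

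The heart of the proof is then to identify $Operad^h_{\{o;c\}}(\sc0;X^\ast)$ with $Bimod^h_{As_{>0}}(As_{>0};B)$. Let $P$ be a cofibrant model of $\sc0$ in $Operad_{\{o;c\}}$ chosen so that $P_c$ is simultaneously a cofibrant replacement of $As_{>0}$ as an $As_{>0}$-bimodule (an operadic analogue of Definition \ref{Remarque}.iii). Since $X^\ast_c$ is pointwise a single point, an operad map $f\colon P\to X^\ast$ is uniquely determined on the $c$-colour, so $f$ is entirely captured by its $o$-component. By construction of $X$ from $B$, the axioms of an operad map on the $o$-colour read exactly as the defining relations of a bimodule map $P_c\to B$, where $B$ is treated as an $As_{>0}$-bimodule via $\alpha$: the composition $\circ_i\colon X(n;o)\times X(m;c)\to X(n+m-1;o)$ for $i<n$ corresponds to the right $O$-action on $B$ precomposed with $\alpha$, while $\circ_n\colon X(n;o)\times X(m;o)\to X(n+m-1;o)$ corresponds to left multiplication by $\alpha(\ast_2)$, and these together (since $As_{>0}$ is generated as an operad by $\ast_2$) exhaust all $As_{>0}$-bimodule operations. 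Assumption (13) is precisely what is needed for the operadic unit axiom at $\ast_{1;o}$ to hold without imposing further constraints on the $o$-part, so that the bimodule-map interpretation is faithful. This yields the desired weak equivalence.

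The formal verification of both weak equivalences above proceeds via a tower of fibrations filtered by arity, directly parallel to the proofs of Propositions \ref{Lmm1TF} and \ref{Lmm2TF}: at each level $k$, the inductive fiber on both sides is a mapping space of the form $Top^g((C_k,\partial C_k);B(k-1))$ for a parametrising cell $C_k$ of the cofibrant replacement, and the constructed map of towers induces a homeomorphism on these fibers. The main obstacle is producing a sufficiently concrete cofibrant replacement $P$ to make these filtrations explicit: unlike the cubical model $\square$ of Proposition \ref{Rbimodule}, a cofibrant coloured operad has a tree-indexed cellular structure, and one must verify that $P_c$ serves as a cofibrant replacement of $As_{>0}$ both as an operad (for the rigidification step) and as an $As_{>0}$-bimodule (for the final identification). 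This compatibility can be established via a shared Boardman--Vogt style W-construction, after which the rest of the argument parallels the proof of Theorem \ref{mainthm}.
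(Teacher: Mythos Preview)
Your overall architecture matches the paper's exactly: the paper also introduces $X^\ast$, proves $\Omega(\ldots)\simeq Operad^h_{\{o;c\}}(\sc0;X^\ast)$ by a deformation-retract argument (Proposition~\ref{LL2}), and then identifies $Operad^h_{\{o;c\}}(\sc0;X^\ast)\simeq Bimod^h_{As_{>0}}(As_{>0};B)$ via towers of fibrations (Proposition~\ref{LL1}), using the Boardman--Vogt resolution $\mathcal{WA}$ as the cofibrant coloured operad.

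There is, however, a genuine slip in your account of the second identification. You assert that the $o$-component of an operad map $P\to X^\ast$ encodes a bimodule map $P_c\to B$, and hence that one wants $P_c$ to be cofibrant as an $As_{>0}$-bimodule. That is the wrong piece: since $X^\ast(n;o)=B(n-1)$, the $o$-component is a family $f_{n;o}\colon P(n;o)\to B(n-1)$, so it is the \emph{open} part of $P$, shifted by one in arity, that has to be assembled into an $As_{>0}$-bimodule. Moreover, this does not work on the nose: for $y\in P(n-l;c)$ the operad relation reads $f_{n;o}(x\circ_i y)=f_{l+1;o}(x)\circ^i\alpha(\ast_{n-l})$, which depends only on the arity of $y$, not on $y$ itself. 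Hence $f_{n;o}$ factors through a quotient of $P(n+1;o)$ identifying all such grafted closed subtrees. The paper carries this out explicitly, setting $\tilde\pentagon(n)=\mathcal{WA}(n+1;o)/\!\approx$ and proving that this quotient is a cofibrant replacement of $As_{>0}$ in $Bimod_{As_{>0}}$. Without this quotient the fiber comparison at each stage of the tower fails: the operad-side fiber is $Top^g\big((\mathcal{WA}(k+1;o),\partial\mathcal{WA}(k+1;o));B(k)\big)$, and to match it on the bimodule side one needs a cofibrant bimodule whose cells are precisely these polytopes with the indicated attaching maps. Your proposed condition on $P_c$ does not produce such an object; the correct bimodule is extracted from $P_o$.
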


\begin{proof}
It is a consequence of Proposition \ref{LL1} and Proposition \ref{LL2}.
\end{proof}

\begin{defi} In order to describe the homotopy fiber the map $(9)$ of Definition \ref{Remarque} we need a cofibrant replacement of $\sc0$ as a coloured operad. 
Since $\sc0$ is cofibrant as an $\{o\,;\,c\}$-sequence, we know from \cite{2005math......2155B} that the Boardman-Vogt resolution of $\sc0$, denoted by $BV(\sc0)$ or just $\mathcal{WA}$ in our case, is the object we are looking for. We recall the construction:\\{}\\
$\bullet\,\,$ Let $tree_{n}^{o}$ be the subset of $\{o\,;\,c\}$-trees consisting of trees $(t\,,\,f)$ with $n$-leaves, $f$ is an $\{o\,;\,c\}$-labelling of $t$ and where the trunk is labelled by $o$, satisfying:
\begin{center}
$\forall v\in V(t)\,:\,
\left\{
\begin{array}{ccc}
  f(e_{0}(v))=c & \Rightarrow & f(e_{i}(v))=c\,\,\forall i\in\{1,\ldots , |v|\} \\ 
  f(e_{0}(v))=o & \Rightarrow & |v|>0\,,\, f(e_{|v|}(v))=o $ and $ f(e_{i}(v))=c \,\, \forall i\in \{1,\ldots,|v|-1\}
\end{array} 
\right.
$.
\end{center}
$\bullet\,\,$ The operad $\mathcal{WA}$ is the $\{o\,;\,c\}$-sequence given by: 

$$
\mathcal{WA}(n\,;\,c):=\,\,\,\, \left. \underset{t\,\in\, \{c\}-tree}{\coprod}\,\,\,\,\,\,\,\,\, \underset{v\,\in\, V(t)}{\prod}\, \sc0\big( f(e_{1}(v),\ldots,f(e_{|v|}(v))\,;\,f(e_{0}(v))\big)\,\times \, \underset{e\,\in\, E^{int}(t)}{\prod}\, [0\,,\,1]\right/ \sim
$$
$$
\mathcal{WA}(n\,;\,o):= \left. \,\,\,\,\,\, \underset{t\,\in\, tree_{n}^{o}}{\coprod}\,\,\,\,\,\,\,\,\,\,\,\,\, \underset{v\,\in\, V(t)}{\prod}\, \sc0\big( f(e_{1}(v),\ldots,f(e_{|v|}(v))\,;\,f(e_{0}(v))\big)\,\times \, \underset{e\,\in\, E^{int}(t)}{\prod}\, [0\,,\,1]\right/ \sim
$$
and the empty set otherwise. The equivalence relation $\sim$ is generated by contracting the inner edges indexed by $0$, using the operadic structure of $\sc0$, and the relation:
\begin{center}
\begin{figure}[!h]
\begin{center}
\includegraphics[scale=0.4]{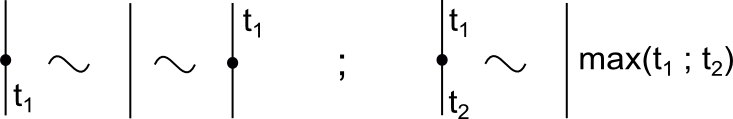}
\end{center}
\end{figure}
\end{center}
A point in $\mathcal{WA}(n\,;\,o)$ will be denoted by $[T\,;\,\{t_{e}\}]$ where $T$ is an element in $tree_{n}^{o}$ such that each vertex has at least  two inputs and $t_{e}\in [0\,,\,1]$ for each inner edge $e\,\in\, E^{int}(t)$. Similarly a point in $\mathcal{WA}(n\,;\,c)$ is denoted by $[T\,;\,\{t_{e}\}]$ with $T$ an element in $\{c\}$-tree. We will use the notation $v_{1}<v_{2}$ if $v_{1}\neq v_{2}$ are two connected vertices such that $d(v_{1}\,;\,r)<d(v_{2}\,;\,r)$.\\The operadic composition $\circ_{i}$ of two points $[T\,;\,\{t_{e}\}]$ and $[T'\,;\,\{t'_{e}\}]$ consists in grafting the tree $T'$ to the $i$-th leaf of $T$ and labelling the new inner edge by $1$.\\
$\bullet\,\,$ It is well known that the operad $\pent := \{\pent(n)=\mathcal{WA}(n\,;\,c)\}_{n>0}$ is a cofibrant replacement of $As_{>0}$ as an operad. It is usually called \textit{the Stasheff operad}.
\end{defi}

The operad $\mathcal{WA}$ has been introduced in \cite{2013arXiv1312.7155H} in order to recognize $A_{\infty}$-spaces and $A_{\infty}$-maps. The next definition is a description of the relative loop space  $\Omega\big( Operad^{h}(As_{>0}\,;\,X_{c})\,;\,Operad^{h}_{\{o\,;\,c\}}(\sc0\,;\,X)\big)$ using the cofibrant replacement $\mathcal{WA}$: 

\begin{defi}\label{defififi}
Define $\Omega \big( Operad(\pent\,;\,X_{c})\,;\,Operad_{\{o\,;\,c\}}(\mathcal{WA}\,;\,X)\big)$ to be the space of applications:
\begin{center}
$g_{n\,;\,c}:\mathcal{WA}(n\,;\,c)\times [0\,,\,1]\rightarrow X(n\,;\,c),\,\,\,\,\,$ for $n>0\,\,\,\,\,$ and  $\,\,\,\,\,g_{n\,;\,o}:\mathcal{WA}(n\,;\,o)\times \{1\}\rightarrow X(n\,;\,o),\,\,\,\,\,$ for $n>0$
\end{center}
satisfying the relations:\\{}\\
$
\begin{array}{lll}
\bullet & g_{n\,;\,c}(x\,\circ_{i}\,y\,;\,t)=g_{l+1\,;\,c}(x\,;\,t)\,\circ_{i}\,g_{n-l\,;\,c}(y\,;\,t) & \text{for}\,\,x\in \mathcal{WA}(l+1\,;\,c),\,\,y\in \mathcal{WA}(n-l\,;\,c)\,\, \text{and} i\in\{1,\ldots,l+1\},\\ 
\bullet & g_{n\,;\,o}(x\,\circ_{i}\,y\,;\,1)=g_{l+1\,;\,o}(x\,;\,1)\,\circ_{i}\,g_{n-l\,;\,c}(y\,;\,1) & \text{for} \,\, x\in \mathcal{WA}(l+1\,;\,o),\,\,y\in \mathcal{WA}(n-l\,;\,c) \,\,\text{and}\,\, i\in\{1,\ldots,l\},\\ 
\bullet & g_{n\,;\,o}(x\,\circ_{l+1}\,y\,;\,1)=g_{l+1\,;\,o}(x\,;\,1)\,\circ_{l+1}\,g_{n-l\,;\,o}(y\,;\,1) & \text{for} \,\,x\in \mathcal{WA}(l+1\,;\,o) \,\,\text{and}\,\,y\in \mathcal{WA}(n-l\,;\,o),
\end{array} 
$\\{}\\
and the boundary condition: $g_{n\,;\,c}(x\,;\,0)=\eta(\ast_{n\,;\,c})$ for $x\in \mathcal{WA}(n\,;\,c)$. 
\end{defi}

\begin{notat}
Let $\alpha : As_{>0}\rightarrow O$ be a map of operads, $\beta:O\rightarrow B$ be a map of $O$-bimodules and $\eta:\sc0\rightarrow X$ the corresponding map of $\{o\,;\,c\}$-operads. The $\{o\,;\,c\}$-sequence $X^{\ast}$  given by:
\begin{center}
 $X^{\ast}(n\,;\,c)=\eta(\ast_{n\,;\,c})\,\,\,$ for $n\geq 0\,$; $\,\,\,\,\,\,\,\,\,X^{\ast}(n\,;\,o)=B(n-1)\,\,\,$ for $n>0$
\end{center} 
and the empty set otherwise inherits from $X$ an $\{o\,;\,c\}$-operadic structure endowed with a map $\eta:\sc0\rightarrow X^{\ast}$.
\end{notat}

\begin{pro}\label{LL1}
Under Assumption $(13)$, the space $Bimod^{h}_{As_{>0}}(As_{>0}\,;\,B)$ is weakly equivalent to $Operad^{h}_{\{o\,;\,c\}}(\sc0\,;\, X^{\ast})$.
\end{pro}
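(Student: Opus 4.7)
The plan is to mirror the strategy of Proposition \ref{Lmm1TF}, now at the level of operads and bimodules rather than bimodules and infinitesimal bimodules. Fix $\mathcal{WA}$ as cofibrant model of $\sc0$ in $Operad_{\{o\,;\,c\}}$, so that $Operad_{\{o\,;\,c\}}^{h}(\sc0\,;\,X^{\ast})\simeq Operad_{\{o\,;\,c\}}(\mathcal{WA}\,;\,X^{\ast})$, and let $\square_{c}$ serve as a cofibrant replacement of $As_{>0}$ in $Bimod_{As_{>0}}$ as in the remark after Proposition \ref{Rbimodule}. A point $g\in Operad_{\{o\,;\,c\}}(\mathcal{WA}\,;\,X^{\ast})$ has $g_{n\,;\,c}$ constant with value $\eta(\ast_{n\,;\,c})=\alpha(\ast_{n})$ (since $X^{\ast}(n\,;\,c)$ is a point) and $g_{n\,;\,o}:\mathcal{WA}(n\,;\,o)\rightarrow B(n-1)$; Assumption $(13)$ is exactly what makes the operadic unit axioms consistent with this collapse.

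First I would build an intermediate cofibrant replacement $\widetilde{\mathcal{W}}$ of $As_{>0}$ in $Bimod_{As_{>0}}$ from the open part of $\mathcal{WA}$ by defining $\widetilde{\mathcal{W}}(n)$ as the quotient of $\mathcal{WA}(n+1\,;\,o)$ by the equivalence relation identifying any two trees that differ only in the planar shape of their entirely $c$-labelled subtrees (which under any map to $X^{\ast}$ must all collapse to the corresponding $\alpha(\ast_{k})$). The $As_{>0}$-bimodule structure on $\widetilde{\mathcal{W}}$ is inherited from the operad structure on $\mathcal{WA}$: $\circ^{i}$ grafts a $c$-leaf, and $\ast_{2}(-;-)$ uses the spinal composition in $\mathcal{WA}(-\,;\,o)$ (via $\alpha(\ast_{2})$). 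Cofibrancy is proved by an attaching-cell induction on the number of spinal vertices exactly in the style of Propositions \ref{cof1} and \ref{Rbimodule}, while contractibility in each arity follows from the fact that $\widetilde{\mathcal{W}}(n)$ is the quotient of a product of cubes indexed by the inner edges of the maximal tree, with fibres of the quotient map that are contractible, whence a weak equivalence by Smale's theorem as invoked in Proposition \ref{Lmm1TF}.

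Next I would define
\[
\xi:Operad_{\{o\,;\,c\}}(\mathcal{WA}\,;\,X^{\ast})\longrightarrow Bimod_{As_{>0}}(\widetilde{\mathcal{W}}\,;\,B);\qquad g\longmapsto \tilde{g},
\]
where $\tilde{g}_{n}:\widetilde{\mathcal{W}}(n)\rightarrow B(n)$ is the map induced by $g_{n+1\,;\,o}$ on the quotient. Compatibility of $g$ with $\circ_{i}$ for $i\leq l$ (using $g_{n-l\,;\,c}=\alpha(\ast_{n-l})$) produces the right $As_{>0}$-module relations, while compatibility with $\circ_{l+1}$ produces the left multiplication relations; the forced collapse of the closed part together with Assumption $(13)$ guarantees that $\tilde{g}$ descends to the quotient $\widetilde{\mathcal{W}}$. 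Since both $\widetilde{\mathcal{W}}$ and $\square_{c}$ are cofibrant replacements of $As_{>0}$, the codomain of $\xi$ is weakly equivalent to $Bimod_{As_{>0}}^{h}(As_{>0}\,;\,B)$, so it suffices to show $\xi$ is a weak equivalence.

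To prove $\xi$ is a weak equivalence I would set up two towers of fibrations by arity filtration: $A_{k}=Operad_{\{o\,;\,c\}}(\mathcal{WA}_{k}\,;\,X^{\ast})$ and $B_{k}=Bimod_{As_{>0}}(\widetilde{\mathcal{W}}_{k}\,;\,B)$, where $\mathcal{WA}_{k}$ and $\widetilde{\mathcal{W}}_{k}$ are the sub-operad resp.\ sub-bimodule generated by arities $\leq k$. By Lemma \ref{lemme1} and pullback squares of the same shape as in Proposition \ref{Lmm1TF} (using the cellular structure of $\mathcal{WA}(k+1\,;\,o)$ as a Boardman--Vogt cell and the pushout defining $\widetilde{\mathcal{W}}_{k+1}$ from $\widetilde{\mathcal{W}}_{k}$), both $A_{k+1}\to A_{k}$ and $B_{k+1}\to B_{k}$ are fibrations, and $\xi$ is the inverse limit of the restriction maps $\xi_{k}:A_{k}\to B_{k}$. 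I would prove each $\xi_{k}$ is a weak equivalence by induction: fixing $g\in A_{k-1}$, Lemma \ref{lemme2} identifies both fibres of $A_{k}\to A_{k-1}$ and $B_{k}\to B_{k-1}$ over $g$ and $\xi_{k-1}(g)$ with spaces of the form $Top^{\partial}\bigl((Y,\partial Y)\,;\,B(k-1)\bigr)$, and the quotient map $\mathcal{WA}(k+1\,;\,o)\twoheadrightarrow \widetilde{\mathcal{W}}(k)$ induces the comparison map, which is a weak equivalence by contractibility of its fibres.

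The main obstacle I anticipate is the construction and verification of $\widetilde{\mathcal{W}}$: one must check simultaneously that (i) the quotient relation is compatible with the Boardman--Vogt cell structure so that $\widetilde{\mathcal{W}}$ is cofibrant in $Bimod_{As_{>0}}$, (ii) the induced $As_{>0}$-bimodule structure is well defined, which is precisely where Assumption $(13)$ intervenes to make the two boundary identifications involving $\beta\circ\alpha(\ast_{0})$ consistent, and (iii) each quotient map $\mathcal{WA}(n\,;\,o)\to \widetilde{\mathcal{W}}(n-1)$ has contractible fibres so that Smale's theorem applies. Once $\widetilde{\mathcal{W}}$ is in place, the rest of the argument is essentially a transposition of Proposition \ref{Lmm1TF}.
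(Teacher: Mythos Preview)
Your proposal is correct and follows essentially the same approach as the paper. The paper denotes your $\widetilde{\mathcal{W}}$ by $\penta$, defines it as $\penta(n)=\mathcal{WA}(n+1\,;\,o)/\!\approx$ where $\approx$ forgets the edge labels lying above any closed inner edge of length~$1$ (this is the precise form of the collapse of $c$-coloured subtrees you describe), proves cofibrancy by the arity filtration and contractibility via Smale's theorem exactly as you sketch, and then concludes with the same tower-of-fibrations comparison of $Operad_{\{o\,;\,c\}}(\mathcal{WA}\,;\,X^{\ast})$ and $Bimod_{As_{>0}}(\penta\,;\,B)$.
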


\begin{proof} 
By assumption $B$ is an $As_{>0}$-bimodule. The first step of the proof consists in building a cofibrant replacement $\penta$ of $As_{>0}$ as an $As_{>0}$-bimodule such that there exists a map $\xi:Operad_{\{o\,;\,c\}}(\mathcal{WA}\,;\,X^{\ast})\rightarrow Bimod_{As_{>0}}(\penta\,;\,B)$.
Let us recall that a point $g\in Operad(\mathcal{WA}\,;\,X^{\ast})$ is described by:
\begin{center}
$
\left\{
\begin{array}{llll}
g_{n\,;\,c}: & \mathcal{WA}(n\,;\,c)\rightarrow X^{\ast}(n\,;\,c); & x\mapsto \eta(\ast_{n\,;\,c}), & $ for $ n>0, \\ 
g_{n\,;\,o}: & \mathcal{WA}(n\,;\,o)\rightarrow X^{\ast}(n\,;\,o), &  & $ for $ n>0.
\end{array} 
\right.
$
\end{center}
satisfying in particular for $x\in \mathcal{WA}(l+1\,;\,o)$, $y\in \mathcal{WA}(n-l\,;\,c)$ and $1\leq i \leq l$ the relation:
\begin{equation}
g_{n\,;\,o}(x\,\circ_{i}\,y)=g_{l+1\,;\,o}(x)\,\circ_{i}\,g_{n-l\,;\,c}(y)=g_{l+1\,;\,o}(x)\,\circ_{i}\,\eta(\ast_{n-l\,;\,c}).
\end{equation}
Define $\approx$ to be the equivalence relation on $\mathcal{WA}(n\,;\,o)$ generated by:
\begin{center}
$[T\,;\,\{t_{e}\}]\approx [T\,;\,\{l_{e}\}]\,\,\,\Leftrightarrow \,\,\, 
\left\{\begin{array}{l}
\bullet\,\, t_{e}=l_{e}\,\,\,\forall e\in E^{int}(T) $ with  $f(e)=o \\ 
$ and $ \\ 
\bullet\,\,  t_{e}=l_{e} \,\, $ if $\nexists\, e_{1}<e\,\, $ such that $ \,\, t_{e_{1}}=l_{e_{1}}=1$ and $ f(e_{1})=c.
\end{array}\right. 
$
\end{center}
We will denote by $\penta$ the sequence $\{\penta(n)=\mathcal{WA}(n+1\,;\,o)/\approx\}_{n>0}$. By convention $\penta(0)$ is the empty set.\\ 
Due to Relation $(14)$, the map $g$ induces a sequence map $\tilde{g}:=\{\tilde{g}_{n}:\penta (n)\rightarrow B(n)\}_{n>0}$.\\

Let us prove that $\penta$ is a cofibrant replacement of $As_{>0}$ as an $As_{>0}$-bimodule. The bimodule structure is given by:
\begin{itemize}
\item[$i)$] $-\,\circ^{i}\ast_{2}\,:\,\,\,\,\,\penta(n)\rightarrow \penta(n+1)\,\,\,\,\,\,\,\,\,\,\,\,\,\,\,\,\,\,\,\,\,\,;\,\,\,\, [T\,;\,\{t_{e}\}]\mapsto [T\,;\,\{t_{e}\}]\,\circ_{i}\,\delta_{2\,;\,c}\,\,\,\,\,\,\,$, for $1\leq i \leq n$,
\item[$ii)$] $\ast_{2}(-\,;\,-)\,:\,\penta(n)\times \penta(m)\rightarrow \penta(n+m) \,;\,\,\,\, \big([T_{1}\,;\,\{t_{e}\}]\,\,;\,\,[T_{2}\,;\,\{l_{e}\}]\big)\mapsto [T_{1}\,;\,\{t_{e}\}]\,\circ_{n+1}\,[T_{2}\,;\,\{l_{e}\}]$. 
\end{itemize}
where $\delta_{n\,;\,c}$ is the $n$-corolla in $\{c\}$-trees and $\delta_{n\,;\,o}$ is the $n$-corolla in $tree_{n}^{o}$. This structure satisfies the bimodule axioms over $As_{>0}$ and it makes $\tilde{f}$ into an $As_{>0}$-bimodule map. Furthermore $\penta$ is a cofibrant replacement:\\{}\\
\textbf{Cofibrant:} let $\penta_{n}$ be the $As_{>0}$-bimodule generated by $\{\penta(i)\}_{i=1}^{n}$ for $n>0$. By convention $\penta_{0}$ is the $As_{>0}$-bimodule $B_{As_{>0}}(\emptyset)$. Let us notice that the map $\mathcal{WA}(n+1\,;\,o)\rightarrow \penta(n)$ preserves the boundary and by definition a point in $\partial \mathcal{WA}(n+1\,;\,o)$ has one of the following form:
\begin{center}
$
\begin{array}{l}
\bullet\,\,[T\,;\,\{t_{e}\}] $ such that there exists $e_{1}\in E^{int}(T)$ with $t_{e_{1}}=1$ and $f(e_{1})=o \\ 
\bullet\,\,[T\,;\,\{t_{e}\}] $ such that there exists $e_{1}\in E^{int}(T)$ with $t_{e_{1}}=1$ and $f(e_{1})=c
\end{array} 
$
\end{center}
In the first case $[T\,;\,\{t_{e}\}]$ has a decomposition $[T_{1}\,;\,\{t^{1}_{e}\}]\,\circ_{|T_{1}|}\,[T_{2}\,;\,\{t^{2}_{e}\}]$. The image  lies in $\penta_{n-1}$ by the axiom $(ii)$. In the second case, $[T\,;\,\{t_{e}\}]$ has a similar decomposition $[T_{1}\,;\,\{t^{1}_{e}\}]\,\circ_{i}\,[T_{2}\,;\,\{t^{2}_{e}\}]$ with $i< |T_{1}|$, $|T_{2}|>1$ and we have the identification:
$$
[T\,;\,\{t_{e}\}]=[(T_{1}\,;\,\{t^{1}_{e}\})\,\circ_{i}\,(T_{2}\,;\,\{t^{2}_{e}\})]=[(T_{1}\,;\,\{t^{1}_{e}\})\,\circ_{i}\,\delta_{|T_{2}|\,;\,c}]=[(T_{1}\,;\,\{t^{1}_{e}\})\,\circ_{i}\,\delta_{|T_{2}|-1\,;\,c}]\,\circ^{i}\,\ast_{2},
$$
hence lies in $\penta_{n-1}$. Consequently $\penta_{n}$ is obtained from $\penta_{n-1}$ by the pushout diagram:
\begin{equation}
\xymatrix{
B_{As_{>0}}(\partial A) \ar[r] \ar[d]_{\tilde{q}} & B_{As_{>0}}(A) \ar[d] \\
\tilde{\pentagon}_{n-1} \ar[r] & \tilde{\pentagon}_{n}
}
\end{equation}
where $A$ is the sequence given by $A(n)=\mathcal{WA}(n+1\,;\,o)$ and the empty set otherwise. The attaching map is the restriction of the quotient map $q:\mathcal{WA}(n+1\,;\,o)\rightarrow \penta(n)$ to the boundary. Furthermore if $i\geq n$ then $\penta_{i}(n)=\penta(n)$ and the map $\partial A \rightarrow A$ is a cofibration. So $lim_{i}\,\penta_{i}=\penta$ and $\penta$ is cofibrant. Lie in the proof of Proposition \ref{Lmm1TF}, these sequences of pushout diagram imply that the spaces $\penta(n)$ are $CW$-complex for each $n$.\\{}\\
\textbf{Contractible:} The map $q:\mathcal{WA}(n+1\,;\,o)\rightarrow \penta(n)$ is a continuous map between compact $CW$-complexes. Since the fiber of $q$ over a point is homeomorphic to a product of polytopes which is contractible, the map $q$ is a weak equivalence  \cite[Main Theorem]{Smale}. Hence $\penta(n)$ is contractible for $n>0$.\\{}\\

Since $\penta$ is a cofibrant replacement of $As_{>0}$ as a bimodule over itself, the space $Bimod^{h}_{As_{>0}}(As_{>0}\,;\,B)$ is weakly equivalent to $Bimod_{As_{>0}}(\penta\,;\,B)$ and the assignment $\xi(g)=\tilde{g}$ defines a map:
\begin{center}
$
\xi:Operad_{\{o\,;\,c\}}(\mathcal{WA}\,;\,X^{\ast})\rightarrow Bimod_{As_{>0}}(\penta\,;\, B)\,\,;\,\,g\mapsto \tilde{g}.
$
\end{center}
In order to prove that $\xi$ is a weak equivalence, we introduce two towers of fibrations.
Define $A'_{k}$ and $B'_{k}$ to be the subspaces:
$$A'_{k}\subset \underset{\text{reduced to a point}}{\underbrace{\underset{i=1}{\overset{k+1}{\prod}}\, Top\big( \mathcal{WA}(i\,;\,c)\,;\, X^{\ast}(i\,;\,c)\big)}}\times \underset{i=1}{\overset{k+1}{\prod}}\, Top\big( \mathcal{WA}(i\,;\,o)\,;\,X^{\ast}(i\,;\,o)\big)\,\,\,\,\,\,\text{and}\,\,\,\,\,B'_{k}\subset \underset{i=1}{\overset{k}{\prod}}\, Top\big( \tilde{\pentagon}(i)\,;\, B(i)\big)$$
with $A'_{k}$ satisfying the operadic relations and $B'_{k}$ the $As_{>0}$-bimodule relations for $k>0$. In other words $A'_{k}$ and $B'_{k}$ are respectively the space $Operad(\mathcal{WA}_{k+1}\,;\,X^{\ast})$ and $Bimod_{As_{>0}}(\penta_{k}\,;\,B)$ where $\mathcal{WA}_{k+1}$ is the sub-operad of $\mathcal{WA}$ generated by $\{\mathcal{WA}(i\,;\,c)\}_{i=1}^{k+1}$ and $\{\mathcal{WA}(i\,;\,o)\}_{i=1}^{k+1}$. Since $\mathcal{WA}(1\,;\,c)$ and $\mathcal{WA}(1\,;\,o)$ are reduced to the unit, the factors $Top\big( \mathcal{WA}(1\,;\,c)\,;\,X^{\ast}(1\,;\,c)\big)$ and $Top\big( \mathcal{WA}(1\,;\,o)\,;\,X^{\ast}(1\,;\,o)\big)$ are one point spaces and can be ignored.
So we consider the two towers:
\begin{center}
$
\xymatrix@R=0.1cm{
A'_{1} & A'_{2} \ar[l] & \cdots \ar[l] & A'_{k} \ar[l] & A'_{k+1} \ar[l] & \cdots \ar[l] \\
B'_{1} & B'_{2} \ar[l] & \cdots \ar[l] & B'_{k} \ar[l] & B'_{k+1} \ar[l] & \cdots \ar[l]
}
$
\end{center}
so that:\\{}\\
$
\begin{array}{lllllll}
A'_{\infty} & = & lim_{k}\, A'_{k} & \simeq & holim_{k}\, A'_{k} & \simeq & Operad_{\{o\,;\,c\}}(\mathcal{WA}\,;\,X^{\ast}), \\ 
B'_{\infty} & = & lim_{k}\, B'_{k} & \simeq & holim_{k}\, B'_{k} & \simeq & Bimod_{As_{>0}}(\penta\,;\,B).
\end{array} 
$\\{}\\
By restriction, the map $\xi$ induces an application between the two towers:
$$
\xymatrix{
A'_{1} \ar[d]^{\xi_{1}} & A'_{2} \ar[l] \ar[d]^{\xi_{2}} & \cdots \ar[l] & A'_{k} \ar[l] \ar[d]^{\xi_{k}} & A'_{k+1} \ar[l] \ar[d]^{\xi_{k+1}}& \cdots \ar[l] \\
B'_{1} & B'_{2} \ar[l] & \cdots \ar[l] & B'_{k} \ar[l] & B'_{k+1} \ar[l] & \cdots \ar[l]
}
$$
with $\xi\,=\, lim_{k}\,\xi_{k}\, = \, holim_{k}\, \xi_{k}$. Consequently, $\xi$ is a weak equivalence if each $\xi_{k}$ is a weak equivalence. We will prove this result by induction on $k$:\\
 $\bullet\,\,\,\xi_{1}$ coincides with the identity. It is a weak equivalence.\\
$\bullet\,\,$ Assume that $\xi_{k-1}$ is a weak equivalence. We consider the following diagram where $g$ is a point in $A'_{k-1}$, $F_{A'}$ is the fiber over $g$ and $F_{B'}$ the fiber over $\xi_{k-1}(g)$. Since the two left horizontal arrows are fibrations, the map $\xi_{k}$ is a weak equivalence if the induced map $\xi_{g}$ is a weak equivalence.
$$
\xymatrix{
A'_{k-1} \ar[d]^{\xi_{k-1}} & A'_{k} \ar[l] \ar[d]^{\xi_{k}} & F_{A'} \ar[l] \ar[d]^{\xi_{g}} \\
B'_{k-1} & B'_{k} \ar[l] & F_{B'} \ar[l]
}
$$
From Lemma \ref{lemme2} the fiber $F_{A'}$ is homeomorphic to the space   $Top^{g_{k+1\,;\,o}}\big(\,\big( \mathcal{WA}(k+1\,;\,o)\,;\,\partial \mathcal{WA}(k+1\,;\,o)\big)\,;\, B(k)\big)$. Similarly $\penta_{k}$ is obtained from $\penta_{k-1}$ by the pushout diagram $(15)$. So the fiber $F_{B'}$ is homeomorphic to the space $Top^{\xi_{k-1}(g)_{k}\circ q}\big(\,\big( \mathcal{WA}(k+1\,;\,o)\,;\,\partial \mathcal{WA}(k+1\,;\,o)\big)\,;\, B(k)\big)$ and we have the commutative square:
$$
\xymatrix{
F_{A'} \ar[d]^{\xi_{g}} \ar[r] & Top^{g_{k+1\,;\,o}}\big(\,\big( \mathcal{WA}(k+1\,;\,o)\,;\,\partial \mathcal{WA}(k+1\,;\,o)\big)\,;\, B(k)\big) \ar@{=}[d]^{id}\\
F_{B'} \ar[r] & Top^{\xi_{k-1}(g)_{k}\circ q}\big(\,\big( \mathcal{WA}(k+1\,;\,o)\,;\,\partial \mathcal{WA}(k+1\,;\,o)\big)\,;\, B(k)\big)
}
$$
consequently $\xi_{k}$ is a weak equivalence.
\end{proof}

\begin{pro}\label{LL2}
Under Assumption $(13)$, the relative loop space $\Omega\big( Operad^{h}(As_{>0}\,;\,X_{c})\,;\,Operad^{h}_{\{o\,;\,c\}}(\sc0\,;\,X)\big)$ is weakly equivalent to the space $Operad^{h}_{\{o\,;\,c\}}(\sc0\,;\,X^{\ast})$. 
\end{pro}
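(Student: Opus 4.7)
The plan is to imitate the strategy of Proposition \ref{Lmm2TF}, replacing the bimodule cofibrant replacement $\square$ of $\sc0$ by the operadic cofibrant replacement $\mathcal{WA}$. By Definition \ref{defififi}, the relative loop space on the left admits the strict model $\Omega\big( Operad(\pent\,;\,X_{c})\,;\, Operad_{\{o\,;\,c\}}(\mathcal{WA}\,;\,X)\big)$. I would introduce the inclusion
$$
i:Operad_{\{o\,;\,c\}}(\mathcal{WA}\,;\,X^{\ast}) \rightarrow \Omega\big( Operad(\pent\,;\,X_{c})\,;\, Operad_{\{o\,;\,c\}}(\mathcal{WA}\,;\,X)\big),
$$
sending an operad map $g$ to the pair $\tilde{g}$ whose closed component $\tilde{g}_{n\,;\,c}:\mathcal{WA}(n\,;\,c)\times [0\,,\,1]\rightarrow X(n\,;\,c)$ is constant equal to $\eta(\ast_{n\,;\,c})$ (legitimate since $X^{\ast}(n\,;\,c)$ is the one-point space $\eta(\ast_{n\,;\,c})$) and whose open component is given by $\tilde{g}_{n\,;\,o}(x\,;\,1)=g_{n\,;\,o}(x)$. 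Assumption $(13)$ is what guarantees that $X^{\ast}$ is a well-defined $\{o\,;\,c\}$-operad, so that the source of $i$ is sensible. The goal is to prove that $i$ is a weak equivalence.

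Following the arity filtration of $\mathcal{WA}$, set $A''_{k}=Operad_{\{o\,;\,c\}}(\mathcal{WA}_{k+1}\,;\,X^{\ast})$ where $\mathcal{WA}_{k+1}$ is the sub-$\{o\,;\,c\}$-operad of $\mathcal{WA}$ generated by arity $\leq k+1$, and let $C''_{k}$ be the corresponding truncation of the relative loop space subject to the compatibility relations of Definition \ref{defififi}. As in the previous propositions, the transition maps $A''_{k+1}\rightarrow A''_{k}$ and $C''_{k+1}\rightarrow C''_{k}$ are fibrations by Lemma \ref{lemme1} applied to pullback squares formed out of the restriction maps to $\partial \mathcal{WA}(k+2\,;\,o)$ and to $\partial'\big(\mathcal{WA}(k+2\,;\,c)\times [0\,,\,1]\big) = \mathcal{WA}(k+2\,;\,c)\times \{0\}\cup \partial \mathcal{WA}(k+2\,;\,c)\times [0\,,\,1]$. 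The inclusion $i$ restricts to a compatible ladder $i_{k}:A''_{k}\rightarrow C''_{k}$ with $i=lim_{k}\,i_{k}=holim_{k}\,i_{k}$, so it suffices to prove each $i_{k}$ is a weak equivalence by induction on $k$.

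The base case is handled by a direct retraction in arity $1$ mirroring the $k=0$ argument of Proposition \ref{Lmm2TF} (the unary pieces of $\mathcal{WA}$ are trivial, and the closed target $X^{\ast}(1\,;\,c)$ is a point). For the inductive step, assuming $i_{k-1}$ is a weak equivalence, choose $g\in A''_{k-1}$ and consider the fibers $F_{A''}$ over $g$ and $F_{C''}$ over $i_{k-1}(g)$, fitting into a commuting square with an induced map $i_{g}:F_{A''}\rightarrow F_{C''}$. By the operadic analogue of Lemma \ref{lemme2} already used in Proposition \ref{LL1}, $F_{A''}$ is homeomorphic to $Top^{g_{k+1\,;\,o}}\big(\,(\mathcal{WA}(k+1\,;\,o)\,;\,\partial \mathcal{WA}(k+1\,;\,o))\,;\, B(k)\big)$, while a point of $F_{C''}$ is a pair $(g_{k+1\,;\,c}\,;\,g_{k+1\,;\,o})$ where the closed factor sends $\mathcal{WA}(k+1\,;\,c)\times \{0\}\cup \partial \mathcal{WA}(k+1\,;\,c)\times [0\,,\,1]$ to $\eta(\ast_{k+1\,;\,c})$ and the open factor is prescribed on $\partial \mathcal{WA}(k+1\,;\,o)$. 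The crucial observation is that in the operadic relations of Definition \ref{defififi} no closed output receives an open input, so the two factors decouple and $F_{C''}$ decomposes as the product of the closed extension space and $F_{A''}$. Reusing verbatim the homotopy $H$ of Proposition \ref{Lmm2TF} that retracts $\mathcal{WA}(k+1\,;\,c)\times [0\,,\,1]$ onto $\mathcal{WA}(k+1\,;\,c)\times \{1\}\cup \partial \mathcal{WA}(k+1\,;\,c)\times [0\,,\,1]$, one contracts the closed factor to the constant map $\eta(\ast_{k+1\,;\,c})$, yielding a deformation retract of $F_{C''}$ onto the image of $i_{g}$ and hence showing $i_{g}$, and therefore $i_{k}$, is a weak equivalence.

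The main obstacle will be making the product decomposition of $F_{C''}$ rigorous: one must carefully verify that the pushout presentation of $\mathcal{WA}_{k+1}$ from $\mathcal{WA}_{k}$ interacts with the compatibility relations of Definition \ref{defififi} in such a way that at each arity the closed and open pieces genuinely decouple, so that the retraction of the closed factor can be performed independently of the open one. Once this is in place, the inductive argument is formally identical to the bimodule case of Proposition \ref{Lmm2TF}, and combined with Proposition \ref{LL1} the theorem of this section follows.
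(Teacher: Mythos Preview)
Your proposal is correct and follows essentially the same route as the paper: the inclusion $i$, the towers $A'_{k}$ and $C'_{k}$, the induction on $k$ via comparison of fibers, and the retraction of the closed cylinder factor are all exactly what the paper does. Two small corrections: the retraction $H$ should collapse $\mathcal{WA}(k+1\,;\,c)\times [0\,,\,1]$ onto $\mathcal{WA}(k+1\,;\,c)\times\{0\}\cup \partial\mathcal{WA}(k+1\,;\,c)\times [0\,,\,1]$ (bottom and sides, where $g_{k+1\,;\,c}$ is already prescribed to equal $\eta$), not onto the top face; and since $\mathcal{WA}(k+1\,;\,c)$ is a Stasheff polytope rather than a cube, the homotopy from Proposition \ref{Lmm2TF} cannot be reused \emph{verbatim} but must be obtained abstractly as a lift along the acyclic cofibration $\partial'\hookrightarrow \mathcal{WA}(k+1\,;\,c)\times[0\,,\,1]$, which is precisely what the paper does.
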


\begin{proof}
We can consider $Operad_{\{o\,;\,c\}}(\mathcal{WA}\,;\,X^{\ast})$ as a subspace of $\Omega \big( Operad(\pent \,;\, X_{c})\,;\, Operad_{\{o\,;\,c\}}(\mathcal{WA}\,;\,X)\big)$ using the inclusion:
$$
\begin{array}{cccc}
i\,: & Operad_{\{o\,;\,c\}}(\mathcal{WA}\,;\,X^{\ast}) & \rightarrow & \Omega \big( Operad(\pent \,;\, X_{c})\,;\, Operad_{\{o\,;\,c\}}(\mathcal{WA}\,;\,X)\big) \\ 
 & g & \mapsto & 
 \left\{\begin{array}{cccc}
 \tilde{g}_{n\,;\,c}\,: & \mathcal{WA}(n\,;\,c)\times [0\,,\,1] \rightarrow X(n\,;\,c) & ; & (x\,;\,t)\mapsto \eta(\ast_{n\,;\,c}) \\ 
 \tilde{g}_{n\,;\,o}\,: & \mathcal{WA}(n\,;\,o)\times \{1\} \rightarrow X(n\,;\,o)  & ; &  (x\,;\,1)\mapsto g_{n\,;\,o}(x)
 \end{array}\right. 
\end{array} 
$$
In order to show that $i$ is a weak equivalence, we introduce two towers of fibrations. One of them is the tower $A'_{k}$ of Proposition \ref{LL1}. The second one is defined by:
$$
C'_{k}\subset \underset{i=1}{\overset{k+1}{\prod}}\, Top\big( \mathcal{WA}(i\,;\,c)\times [0\,,\,1]\,;\, X(i\,;\,c)\big)\times \underset{i=1}{\overset{k+1}{\prod}}\, Top\big( \mathcal{WA}(i\,;\,o)\,;\,X(i\,;\,o)\big)
$$
satisfying the relations of Definition \ref{defififi}. Since $\mathcal{WA}(1\,;\,c)$ and $\mathcal{WA}(1\,;\,o)$ are reduced to the unit, the factors $Top\big(\mathcal{WA}(1\,;\,c)\times [0\,,\,1]\,;\,X(1\,;\,c)\big)$ and $Top\big(\mathcal{WA}(1\,;\,o)\,;\,X(1\,;\,o)\big)$ are the one point space and can be ignored. The restriction to the space $A'_{k}$ of the inclusion $i$ induces a map between the two towers:
$$
\xymatrix{
A'_{1} \ar[d]^{i_{1}} & A'_{2} \ar[l] \ar[d]^{i_{2}} & \cdots \ar[l] & A'_{k} \ar[l] \ar[d]^{i_{k}} & A'_{k+1} \ar[l] \ar[d]^{i_{k+1}}& \cdots \ar[l] \\
C'_{1} & C'_{2} \ar[l] & \cdots \ar[l] & C'_{k} \ar[l] & C'_{k+1} \ar[l] & \cdots \ar[l]
}
$$
Since the space $\Omega \big( Operad(\pent \,;\, X_{c})\,;\, Operad_{\{o\,;\,c\}}(\mathcal{WA}\,;\,X)\big)$ is weakly equivalent to the limit of $C'_{k}$, the map $i$ is a weak equivalence if each $i_{k}$ is a weak equivalence. We will prove this result by induction on $k$:\\{}\\
$\bullet\,\,$ If $k=1$, a point in $C'_{1}$ is a pair $(g_{2\,;\,c}\,;\,g_{2\,;\,o})$ whereas the points in the image of $i_{1}$ coincide with the pairs satisfying:
$$
g_{2\,;\,c}:\mathcal{WA}(2\,;\,c)\times [0\,,\,1]\rightarrow X(2\,;\,c)\,\,;\,\,(x\,;\,t)\mapsto \eta(\ast_{2\,;\,c}).
$$
Since $g_{2\,;\,c}(x\,;\,0)=\eta(\ast_{1\,;\,c})$ for any pair in $C'_{1}$, the inclusion $i_{1}$ induces the following deformation retract:
\begin{center}
$
\begin{array}{cccl}
H\,: & C'_{1}\times[0\,,\,1] & \rightarrow & C'_{1} \\ 
 & y=\big( (g_{2\,;\,c}\,;\,g_{2\,;\,o})\,;\,t_{1}\big) & \mapsto & 
 \left\{
 \begin{array}{l}
 H(y)_{2\,;\,c}(\delta_{2\,;\,c}\,;\,t)=g_{2\,;\,c}\big(\delta_{2\,;\,c}\,;\,t(1-t_{1})\big) \\ 
 H(y)_{2\,;\,o}(\delta_{2\,;\,o}\,;\,1)=g_{2\,;\,o}(\delta_{2\,;\,o}\,;\,1)
 \end{array} 
 \right.
\end{array} 
$
\end{center}
$\bullet\,\,$ From now on we assume that $i_{k-1}$ is a weak equivalence for $k\geq 2$. We consider the following diagram where $g$ is a point in $A'_{k-1}$, $F_{A'}$ is the fiber over $g$ and $F_{C'}$ the fiber over $i_{k-1}(g)$. Since the two left horizontal arrows are fibrations, the map $i_{k}$ is a weak equivalence if the induced map $i_{g}$ is a weak equivalence. 
$$
\xymatrix{
A'_{k-1} \ar[d]^{i_{k-1}} & A'_{k} \ar[l] \ar[d]^{i_{k}} & F_{A'} \ar[l] \ar[d]^{i_{g}} \\
C'_{k-1} & C'_{k} \ar[l] & F_{C'} \ar[l]
}
$$
A point in the fiber $F_{C'}$ is defined by a pair $(g_{k+1\,;\,c}\,;\,g_{k+1\,;\,o})$ satisfying the relations of Definition \ref{defififi}. Since the pair is in the fiber over $i_{k-1}(g)$, the map $g_{k+1\,;\,c}$ sends all the faces of $\mathcal{WA}(k+1\,;\,c)\times [0\,,\,1]$ on $\eta(\ast_{k+1\,;\,c})$ except for the face $\mathcal{WA}(k+1\,;\,c)\times \{1\}$.\\
On the other hand the points in the image of $i_{g}$ coincide with the pairs $(g_{k+1\,;\,c}\,;\,g_{k+1\,;\,o})$ such that:
$$
g_{k+1\,;\,c}:\mathcal{WA}(k+1\,;\,c)\times [0\,,\,1]\rightarrow X(k+1\,;\,c)\,\,;\,\, (x\,;\,t)\mapsto \eta(\ast_{k+1\,;\,c}).
$$
Let us denote by $Face$ the interior of the space $\mathcal{WA}(k+1\,;\,c)\times \{1\}$ such that the inclusion:
$$
\partial\big( \mathcal{WA}(k+1\,;\,c)\times [0\,,\,1]\big)\setminus Face\rightarrow \mathcal{WA}(k+1\,;\,c)\times [0\,,\,1]
$$
is an acyclic cofibration. In order to prove that $i_{g}$ induces a deformation retract, we consider a lift $H$ in the following diagram:
$$
\xymatrix{
\big( \partial \big( \mathcal{WA}(k+1\,;\,c)\times [0\,\,1]\big)\setminus Face\big) \times [0\,,\,1] \,\,\bigsqcup \,\, \big( \mathcal{WA}(k+1\,;\,c)\times [0\,,\,1]\big) \times \{0\} \ar[r] \ar[d]_{\simeq} & \mathcal{WA}(k+1\,;\,c)\times [0\,,\,1] \\
\big( \mathcal{WA}(k+1\,;\,c)\times [0\,,\,1]\big)\times [0\,,\,1] \ar@{-->}[ru]_{H} & 
}
$$
where the horizontal arrow is the inclusion on the factor $\big( \mathcal{WA}(k+1\,;\,c)\times [0\,,\,1]\big) \times \{0\}$ and sends a point $\big( (x\,;\,t_{1})\,;\,t_{2}\big)\in \big( \partial \big( \mathcal{WA}(k+1\,;\,c)\times [0\,\,1]\big)\setminus Face\big) \times [0\,,\,1]$ to $(x\,;\,(1-t_{2})t_{1})$. The homotopy $H$ is illustrated by the following picture:
\begin{center}
\begin{figure}[!h]
\begin{center}
\includegraphics[scale=0.4]{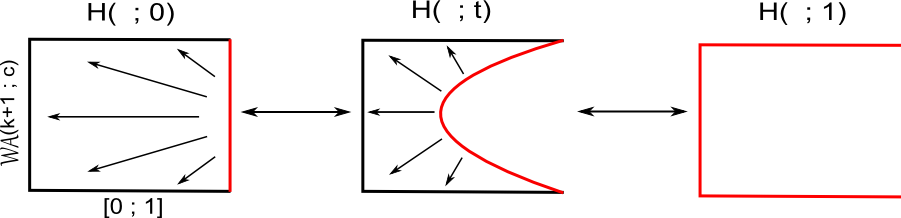}
\end{center}
\end{figure}
\end{center}
In other words, the points in the image of $i_{g}$ coincide with the pairs such that:
\begin{center}
$
g_{k+1\,;\,c}(x\,;\,t)=g_{k+1\,;\,c}\big( H\big(\,(x\,;\,t)\,;\,1\big)\,\big)=\eta(\ast_{k+1\,;\,c}),
\,\,\,\,$ for $ x\in \mathcal{WA}(k+1\,;\,c)$ and $t\in[0\,,\,1]$.
\end{center}
Finally the deformation retract is given by:
\begin{center}
$
\begin{array}{lcll}
H_{2}\,: & F_{C'}\times [0\,,\,1] & \rightarrow & F_{C'} \\ 
 & y=\big( (g_{k+1\,;\,c}\,;\,g_{k+1\,;\,o})\,;\,t_{1}\big) & \mapsto & 
 \left\{
 \begin{array}{ccc}
 H_{2}(y)_{k+1\,;\,c}(x\,;\,t)  =  g_{k+1\,;\,c}\big( H\big((x\,;\,t)\,;\,t_{1}\big) \big) & $ for $ & x\in \mathcal{WA}(k\,;\,c)$ and $ t\in [0\,,\,1] \\ 
 H_{2}(y)_{k+1\,;\,o}(x\,;\,1)  =  g_{k+1\,;\,o}(x\,;\,1) & $ for $ & x\in \mathcal{WA}(k+1\,;\,o)
 \end{array} 
 \right.
\end{array} 
$
\end{center}
Consequently $\Omega\big( Operad(\pent\,;\,X_{c})\,;\,Operad_{\{o\,;\,c\}}(\mathcal{WA}\,;\,X)\big)$ is weakly equivalent to $Operad_{\{o\,;\,c\}}(\mathcal{WA}\,;\,X^{\ast})$.
\end{proof}


\begin{cor}
Let $\alpha : As\rightarrow O$ be a map of operads and $\beta : O\rightarrow B$ be a map of $O$-bimodules. Under Assumption $(13)$, if $B(0)\simeq \ast$ and $O(0)\simeq O(1) \simeq \ast$ then the pair $(sTot(O)\,;\,sTot(B))$ is weakly equivalent to the $\mathcal{SC}_{2}$-space: 
\begin{center}
$\big( \Omega^{2}Operad^{h}(As_{>0}\,;\,O)\,;\,\Omega^{2}\big( Operad^{h}(As_{>0}\,;\,X_{c})\,;\,Operad^{h}_{\{o\,;\,c\}}(\sc0\,;\,X)\big)\,\big)$,
\end{center}
where $X$ is the operad given by Relations $(12)$.
\end{cor}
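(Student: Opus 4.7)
The proof will handle the two coordinates of the pair separately, stringing together the delooping results already established, and then invoke Voronov's construction to promote the product weak equivalence into one of $\mathcal{SC}_2$-spaces. For the first coordinate, since $\alpha:As\to O$ makes $O$ a multiplicative operad satisfying $O(0)\simeq O(1)\simeq\ast$, the double delooping theorem of Turchin and of Dwyer--Hess cited in the introduction directly gives
$$
sTot(O)\simeq \Omega^{2}\, Operad^{h}(As_{>0}\,;\,O).
$$

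For the second coordinate, I would proceed in two stages. First, view $B$ as an $As_{>0}$-bimodule through $\beta\circ\alpha:As_{>0}\to B$; since $B(0)\simeq\ast$, Turchin's single delooping for multiplicative bimodules (\cite[Theorem 6.2]{Tourtchine:arXiv1012.5957}) yields
$$
sTot(B)\simeq \Omega\, Bimod^{h}_{As_{>0}}(As_{>0}\,;\,B).
$$
Second, apply the theorem of Section 5 (the combination of Propositions \ref{LL1} and \ref{LL2}), which is available precisely because Assumption $(13)$ is in force and produces the genuine $\{o\,;\,c\}$-operad $X$ of $(12)$. This gives
$$
Bimod^{h}_{As_{>0}}(As_{>0}\,;\,B)\simeq \Omega\big(Operad^{h}(As_{>0}\,;\,X_{c})\,;\,Operad^{h}_{\{o;c\}}(\sc0\,;\,X)\big),
$$
noting that $X_{c}=O$ by construction. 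Chaining the two stages yields
$$
sTot(B)\simeq \Omega^{2}\big(Operad^{h}(As_{>0}\,;\,O)\,;\,Operad^{h}_{\{o;c\}}(\sc0\,;\,X)\big).
$$

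At this point the pair $(sTot(O)\,;\,sTot(B))$ is weakly equivalent, coordinate-wise, to a pair of the shape $\big(\Omega^{2}Y\,;\,\Omega^{2}(Y\,;\,Z)\big)$ with $Y=Operad^{h}(As_{>0}\,;\,O)$, $Z=Operad^{h}_{\{o;c\}}(\sc0\,;\,X)$ and the projection $p_{2}^{h}:Z\to Y$ of $(9)$ of Definition \ref{Remarque} as the defining pointed map. By the construction of Voronov recalled in the introduction, every such pair of relative double loop spaces carries a canonical action of $\mathcal{SC}_{2}$, so the target is naturally an $\mathcal{SC}_{2}$-space.

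The routine part is the chain of equivalences above: each link was established earlier in the paper (or cited from Turchin). The genuine obstacle is coherence — one must check that the coordinate-wise weak equivalences are compatible with the $\mathcal{SC}_{2}$-action, i.e.\ that the closed operadic composition (the little $2$-discs action on $\Omega^{2}Y$) and the open half-disc action on the relative loop space, as transported from the Voronov model, correspond under the equivalences to the composition arising from $\ast_{2\,;\,c}\in\sc0(2\,;\,c)$ and $\ast_{2\,;\,o}\in\sc0(2\,;\,o)$ acting on $sTot(O)$ and $sTot(B)$. I would argue this by tracking how products of loops match the McClure--Smith $\boxtimes$-products (Proposition \ref{definMS} and Proposition \ref{propSC}): the closed product on $sTot(O)$ corresponds to the monoid structure of $O_{c}$ in $(Top^{\Delta_{inj}},\boxtimes)$, while the open product on $sTot(B)$ corresponds to the left $O_{c}$-module structure on $B$. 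Under the delooping identifications these are precisely the products carried by the relative loop spaces on the right, proving the coordinate-wise weak equivalence is one of $\mathcal{SC}_{2}$-spaces.
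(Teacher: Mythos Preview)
Your argument is correct and follows precisely the route the paper intends: the corollary is stated without proof because it is the immediate concatenation of Turchin's double delooping for $sTot(O)$ (using $O(0)\simeq O(1)\simeq\ast$), Turchin's single delooping for $sTot(B)$ (using $B(0)\simeq\ast$), and the theorem of Section~5 (Propositions~\ref{LL1} and~\ref{LL2}) under Assumption~$(13)$; the target pair is then an $\mathcal{SC}_2$-space by Voronov's construction recalled in the introduction.

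One remark: your final paragraph on coherence of the $\mathcal{SC}_2$-actions goes beyond what the corollary asserts. The statement only claims that the pair $(sTot(O);sTot(B))$ is weakly equivalent, as a pair of spaces, to a pair which \emph{happens to carry} an $\mathcal{SC}_2$-structure; it does not claim the equivalence is one of $\mathcal{SC}_2$-algebras, and indeed no $\mathcal{SC}_2$-structure has been put on the source pair at this point in the paper. So the coherence discussion, while thoughtful, is unnecessary here.
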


\section{Double relative delooping: general case}
In this section $O$ is an $\{o\,;\,c\}$-operad endowed with a map of operads $\eta:\tc\rightarrow O$ which makes $O$ into an $\sc0$-bimodule under $\tc$. We denote by $(O_{c}\,;\,O_{o})$ the pair of semi-cosimplicial spaces associated to $O$ (see Proposition \ref{propSC}). In  section $4$ we proved that the pair $(sTot(O_{c})\,;\,sTot(O_{o}))$ is weakly equivalent to:
\begin{center}
$\big( \Omega Bimod^{h}_{As_{>0}}(As_{>0}\,;\,O_{c})\,;\,\Omega \big( Bimod^{h}_{As_{>0}}(As_{>0} \,;\, O_{c})\,;\, Bimod^{h}_{\sc0}(\sc0 \,;\, O)\big)\,\big)$
\end{center}
under the assumption $O(0\,;\,c)\simeq \ast$.

If we assume that $O(1\,;\,c)\simeq \ast$, then $Bimod^{h}_{As_{>0}}(As_{>0}\,;\,O_{c})\simeq \Omega Operad^{h}(As_{>0}\,;\,O_{c})$. Similarly Marcy D. Robertson shows in \cite{2011arXiv1111.3904R} that the derived space of bimodule maps is weakly equivalent to the loop space of the derived space of  operadic maps. More precisely, in our case we have:

\begin{pro}\label{terminal}
Let $\eta:\tc\rightarrow O$ be a map of $\{o\,;\,c\}$-operads with $O(1\,;\,c)\simeq O(1\,;\,o) \simeq \ast$. The space $Bimod^{h}_{\sc0}(\sc0\,;\,O)$ is weakly equivalent to $\Omega Operad^{h}_{\{o\,;\,c\}}(\sc0\,;\,O)$. 
\end{pro}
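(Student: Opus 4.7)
The plan is to apply the two-coloured version of Turchin's delooping theorem (in the form of Robertson, cited in the statement) to our setting, in which the source operad is $\sc0$ and the target is $O$. In the uncoloured case the key heuristic is that a bimodule map out of a suitable cofibrant model of the source operad is the same data as a one-parameter family $t\in[0\,,\,1]$ of operad maps, where the endpoint at $t=1$ is a genuine operad map and the endpoint at $t=0$ is forced to factor through the arity-one part of the target; the contractibility hypothesis then collapses both endpoint loci onto the basepoint and identifies the family space with the based loop space. I would adapt this heuristic to the two colours $\{o\,;\,c\}$.

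First I would construct a cofibrant replacement $\mathcal{C}$ of $\sc0$ as an $\sc0$-bimodule, built out of the Boardman--Vogt operadic resolution $\mathcal{WA}$ of Section~5. This would be the coloured two-sided analogue of the model $\tilde{\pentagon}$ used in Proposition~\ref{LL1}: $\mathcal{C}$ is indexed by $\{o\,;\,c\}$-trees with section whose pearl vertices carry $\sc0$-operations, whose non-pearl vertices carry $\mathcal{WA}$-operations, and whose inner edges bear a parameter $t\in[0\,,\,1]$; the relations identify $t=0$ on an inner edge with the bimodule composition $\circ^{i}$ (left- or right-action depending on orientation), while the operadic structure of $\mathcal{WA}$ governs composition of adjacent non-pearl vertices. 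Cofibrancy and contractibility in each $\{o\,;\,c\}$-degree would follow a filtration-by-arity argument as in Propositions~\ref{Rbimodule} and~\ref{LL1}.

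The comparison map sends a bimodule map $g:\mathcal{C}\to O$ to the path $t\mapsto g_{t}$ obtained by fixing all interval parameters to a common value $t$, where $g_{t}:\mathcal{WA}\to O$ is a family of $\{o\,;\,c\}$-sequence maps. The endpoint $g_{1}$ is an honest $\{o\,;\,c\}$-operad map by construction, while the endpoint $g_{0}$ is determined entirely by the pearl decoration and by arity-one data, hence factors through $O(1\,;\,c)$ in closed degree and $O(1\,;\,o)$ in open degree. Using the hypotheses $O(1\,;\,c)\simeq O(1\,;\,o)\simeq \ast$, both endpoint loci contract onto the single map induced by $\eta$, so the path space is homotopy equivalent to $\Omega Operad^{h}_{\{o\,;\,c\}}(\sc0\,;\,O)$ based at $\eta$.

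Finally, I would make this comparison rigorous via a tower-of-fibrations induction exactly parallel to Propositions~\ref{Lmm1TF}, \ref{Lmm2TF}, \ref{LL1} and~\ref{LL2}: filter $\mathcal{C}$ and $\mathcal{WA}\times[0\,,\,1]$ by arity, use Lemma~\ref{lemme1} to obtain fibrations, identify the fibers via Lemma~\ref{lemme2} as relative mapping spaces out of the attaching cells, and induct on $k$. The induction step would be a deformation retraction sliding the interval parameter to $1$, in the style of Propositions~\ref{Lmm2TF} and~\ref{LL2}; the base cases at arity one (one for each colour) are exactly where the hypotheses $O(1\,;\,c)\simeq O(1\,;\,o)\simeq \ast$ are consumed. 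The main obstacle is the construction of $\mathcal{C}$ itself and the correct bookkeeping of the two colours: the relations on pearls whose inputs mix colours must be arranged so that the bimodule left-action and the $\mathcal{WA}$-operadic composition become compatible in the limit $t=1$, and it is precisely at these mixed pearls that the two-coloured structure genuinely goes beyond the uncoloured argument of Robertson.
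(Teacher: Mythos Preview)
Your overall strategy---build a cofibrant bimodule model of $\sc0$, compare it to the loop space of the operad mapping space, and induct through towers of fibrations---is exactly the template the paper follows, and is the coloured analogue of Turchin's argument. In that sense the proposal is correct.

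The paper's execution differs from yours in two instructive ways. First, the cofibrant $\sc0$-bimodule model is not a Boardman--Vogt style object on trees with section, but rather Turchin's polytope model $\mathcal{B}\pentagon$: a cell for each $\{o;c\}$-tree $T$, given as the product $\lambda_{\pentagon}(T)\times\chi_{\blacktriangle}(T)$ where each vertex $v$ carries an element of $\mathcal{WA}(|v|;f(e_{0}(v)))$ together with a height parameter $t_{v}\in[0,1]$ subject to the monotonicity $t_{v_{1}}\leq t_{v_{2}}$ whenever $v_{1}<v_{2}$. Because every non-empty component of $\sc0$ is a point, this model satisfies $\mathcal{B}\pentagon_{o;c}(n;c)=\mathcal{B}\pentagon_{o;c}(n;o)=\mathcal{B}\pentagon(n)$: the two colours share \emph{the same underlying space}, so the coloured bookkeeping you anticipate at ``mixed pearls'' collapses entirely, and the two-coloured argument is word-for-word Turchin's uncoloured one with an extra index. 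Second, the comparison map is built in the opposite direction from yours: given a loop $f\in\Omega\,Operad_{\{o;c\}}(\mathcal{WA};O)$, the paper defines $\xi^{f}$ on each cell $\mathcal{B}\pentagon_{o;c}(T)$ by inductively composing $f_{|r|;f(e_{0}(r))}(x_{r};t_{r})$ with the subtree contributions using the operadic structure of $O$. Your direction (restricting a bimodule map to a common parameter value $t$) is conceptually inverse to this and should also work, but note that for intermediate $t$ the restricted map $g_{t}$ is not an operad map; the paper's direction avoids having to say what $g_{t}$ is for $t\in(0,1)$.
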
  

\begin{sproof}
The proof is the same as in \cite[Theorem $7.2$]{Tourtchine:arXiv1012.5957}. According to the notation of Turchin, $\mathcal{B}\pent$ is a cofibrant replacement of $As_{>0}$ in the model category $Bimod_{As_{>0}}$ so that there exists a map:
\begin{center}
$\xi_{c}:  \Omega Operad(\pent\,;\,O_{c}) \rightarrow Bimod_{As_{>0}}(\mathcal{B}\pent\,;\,O_{c})\,\,;\,\, f\mapsto \xi^{f}_{c}\,\,$.
\end{center}
Using towers of fibrations as in section $4$ and $5$, Turchin proves that $\xi_{c}$ is a weak equivalence. The construction of the map $\xi_{c}$ is obtained from a polytope subdivision $\mathcal{B}\pent(n)=\{\mathcal{B}\pent(T)\}_{T}$ indexed by $\{c\}$-trees with $n$ leaves. More precisely, for any $\{c\}$-tree $T$ with $n$ leaves the space $\mathcal{B}\pent(T)$ is the product of the two spaces:\\{}\\
$
\begin{array}{l}
\bullet\,\, \lambda_{\pentagon}(T)=\underset{v\in V(T)}{\prod}\,\pent(|v|),  \\ 
\bullet\,\, \chi_{\blacktriangle}(T)=\{\,\{t_{v}\}_{v\in V(T)}\,|\, t_{v}\in [0\,,\,1] $ and $ t_{v_{1}}\leq t_{v_{2}} $ if $ v_{1}<v_{2}\}\subset [0\,,\,1]^{|V(T)|}.
\end{array} 
$\\{}\\
A point in $\mathcal{B}\pent(T)$ is denoted by $\{x_{v}\,;\,t_{v}\}$ with $\{x_{v}\}\in \lambda_{\pentagon}(T)$ and $\{t_{v}\}\in \chi_{\blacktriangle}(T)$. For any $f\in \Omega Operad(\pent\,;\,O_{c})$, the map $\xi_{c}^{f}$ is defined on each polytope $\mathcal{B}\pent(T)$ by induction on the number of vertices of $T$ using the operadic structure of $O_{c}$:
\begin{center}
$
\begin{array}{cccl}
 \xi^{f}_{T\,;\,c}\, : & \mathcal{B}\pent(T) & \rightarrow & O(n\,;\,c) \\ 
 & \{x_{v}\,;\,t_{v}\} & \mapsto & f_{|r|}(x_{r}\,;\,t_{r})\,\big( \xi_{T_{1}\,;\,c}^{f}(\{x^{1}\,;\,t^{1}\}),\ldots, \xi_{T_{|r|}\,;\,c}^{f}(\{x^{|r|}\,;\,t^{|r|}\})\big)
\end{array} 
$
\end{center}
with $T_{i}$ the subtree of $T$ whose trunk coincides with the $i$-th input edge of the root of $T$.

In our case, a cofibrant replacement of $\sc0$ in the model category $Bimod_{\sc0}$ is the $\{o\,;\,c\}$-sequence:
\begin{center}
$\mathcal{B}\pent_{o\,;\,c}(n\,;\,c)=\mathcal{B}\pent_{o\,;\,c}(n\,;\,o)=\mathcal{B}\pent(n),\,\,\,\,\,$ for $n>0$ 
\end{center}
and the empty set otherwise with the obvious $\sc0$-bimodule structure. The space $\mathcal{B}\pent_{o\,;\,c}(n\,;\,o)$ has a polytope subdivision $\{\mathcal{B}\pent_{o\,;\,c}(T)\}_{T}$ indexed by $tree_{n}^{o}$. The space $\mathcal{B}\pent_{o\,;\,c}(T)$ is the product of the two spaces:\\{}\\
$
\begin{array}{l}
\bullet\,\, \lambda_{\pentagon}(T)=\underset{v\in V(T)}{\prod}\,\mathcal{WA}\big(n\,;\,f(e_{0}(v))\big),  \\ 
\bullet\,\, \chi_{\blacktriangle}(T)=\{\,\{t_{v}\}_{v\in V(T)}\,|\, t_{v}\in [0\,,\,1] $ and $ t_{v_{1}}<t_{v_{2}} $ if $ v_{1}<v_{2}\}\subset [0\,,\,1]^{|V(T)|}.
\end{array} 
$\\{}\\
A point in $\mathcal{B}\pent_{o\,;\,c}(T)$ is denoted by $\{x_{v}\,;\,t_{v}\}$. For any $f\in \Omega Operad(\mathcal{WA}\,;\,O)$, the map $\xi^{f}$ is defined by $\xi^{f}_{n\,;\,c}$ as previously and $\xi^{f}_{n\,;\,o}$ by induction on the number of vertices of $T$ using the operadic structure of $O$:
\begin{center}
$
\begin{array}{cccl}
 \xi^{f}_{T\,;\,o}\, : & \mathcal{B}\pent_{\{o\,;\,c\}}(T) & \rightarrow & O(n\,;\,o) \\ 
 & \{x_{v}\,;\,t_{v}\} & \mapsto & f_{|r|\,;\,o}(x_{r}\,;\,t_{r})\,\big( \xi_{T_{1}\,;\,c}^{f}(\{x^{1}\,;\,t^{1}\}),\ldots, \xi_{T_{|r|}\,;\,o}^{f}(\{x^{|r|}\,;\,t^{|r|}\})\big)
\end{array} 
$
\end{center}
with $T_{i}$ the subtree of $T$ whose trunk coincides with the $i$-th input edge of the root of $T$. It defines a map from  $\Omega Operad_{\{o\,;\,c\}}(\mathcal{WA}\,;\,O)$ to $Bimod_{\sc0}(\mathcal{B}\pent_{o\,;\,c}\,;\,O)$ which is a weak equivalence using the same arguments as Turchin in \cite{Tourtchine:arXiv1012.5957}. 
\end{sproof}

\begin{thm}
Assume $O$ is an $\{o\,;\,c\}$-operad such that $O(0\,;\,c)\simeq O(1\,;\,c)\simeq \ast$ and $O(1\,;\,o)\simeq \ast$. Let $\eta:\tc\rightarrow O$ be a map of $\{o\,;\,c\}$-operads. The pair $(sTot(O_{c})\,;\,sTot(O_{o}))$ is weakly equivalent to the $\mathcal{SC}_{2}$-space:
\begin{center}
$\big( \Omega^{2}Operad^{h}(As_{>0}\,;\,O_{c})\,;\,\Omega^{2}\big( Operad^{h}(As_{>0}\,;\,O_{c})\,;\,Operad^{h}_{\{o\,;\,c\}}(\sc0\,;\,O)\big)\,\big)$.
\end{center}
\end{thm}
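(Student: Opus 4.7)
The plan is to assemble the theorem from the ingredients already established in sections $4$ and $5$, together with the double delooping results of Turchin and Proposition \ref{terminal}, rather than to redo any tower-of-fibrations argument from scratch.

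First I would treat the closed part. Theorem \ref{coro1}, combined with the hypothesis $O(0\,;\,c)\simeq \ast$, identifies $sTot(O_{c})$ with $Ibimod_{As_{>0}}^{h}(As\,;\,O_{c})$, and then Turchin's first delooping \cite[Theorem $6.2$]{Tourtchine:arXiv1012.5957} (applicable since $O(0\,;\,c)\simeq \ast$) upgrades this to $\Omega Bimod_{As_{>0}}^{h}(As_{>0}\,;\,O_{c})$. Turchin's second delooping \cite[Theorem $7.2$]{Tourtchine:arXiv1012.5957}, which requires $O(1\,;\,c)\simeq \ast$, then delivers a weak equivalence $Bimod_{As_{>0}}^{h}(As_{>0}\,;\,O_{c})\simeq \Omega Operad^{h}(As_{>0}\,;\,O_{c})$, so that $sTot(O_{c})\simeq \Omega^{2}Operad^{h}(As_{>0}\,;\,O_{c})$, which is the first coordinate of the claimed $\mathcal{SC}_{2}$-space.

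Next I would treat the open part. Theorem \ref{mainthm} already gives
$$
sTot(O_{o})\;\simeq\; \Omega \big( Bimod^{h}_{As_{>0}}(As_{>0} \,;\, O_{c})\,;\, Bimod^{h}_{\sc0}(\sc0 \,;\, O)\big).
$$
To double-deloop, the key is to promote the bimodule-level projection $p_{1}^{h}$ to the operadic-level projection $p_{2}^{h}$ via a commutative square
$$
\xymatrix@R=0.4cm{
Bimod_{\sc0}^{h}(\sc0\,;\,O) \ar[r]^{p_{1}^{h}} \ar[d]_{\simeq} & Bimod_{As_{>0}}^{h}(As_{>0}\,;\,O_{c}) \ar[d]^{\simeq} \\
\Omega Operad_{\{o\,;\,c\}}^{h}(\sc0\,;\,O) \ar[r]_{\Omega p_{2}^{h}} & \Omega Operad^{h}(As_{>0}\,;\,O_{c})
}
$$
in which the right vertical arrow is Turchin's $7.2$ delooping and the left vertical arrow is Proposition \ref{terminal}. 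The commutativity of this square is the real content: the delooping map $\xi$ constructed in the sketch of Proposition \ref{terminal} is built polytope-by-polytope on $\mathcal{B}\pentagon_{o\,;\,c}$, which by design restricts on its closed part $\mathcal{B}\pentagon_{o\,;\,c}(n\,;\,c)=\mathcal{B}\pentagon(n)$ to exactly Turchin's closed-part map $\xi_{c}$; hence taking $(-)_{c}$ in the source of $\xi$ is intertwined with the projection to $\Omega Operad^{h}(As_{>0}\,;\,O_{c})$ in the target.

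Once this square is in place, taking homotopy fibers row by row, and using that $\Omega$ commutes with homotopy fibers (a finite limit), yields the weak equivalence
$$
\Omega \big( Bimod_{As_{>0}}^{h}(As_{>0}\,;\,O_{c})\,;\, Bimod_{\sc0}^{h}(\sc0\,;\,O)\big)\;\simeq\; \Omega^{2}\big( Operad^{h}(As_{>0}\,;\,O_{c})\,;\, Operad_{\{o\,;\,c\}}^{h}(\sc0\,;\,O)\big),
$$
which, combined with Theorem \ref{mainthm}, delivers the second coordinate. The main obstacle I expect is precisely the compatibility of the two deloopings with the forgetful functor to the closed part: one has to verify that a cofibrant model of $\sc0$ in $Bimod_{\sc0}$ (resp.\ in $Operad_{\{o\,;\,c\}}$) can be chosen so that its restriction to colour $\{c\}$ is cofibrant in $Bimod_{As_{>0}}$ (resp.\ in $Operad$), and that the polytope-subdivision formula for $\xi$ in Proposition \ref{terminal} is strictly functorial under this restriction. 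This is essentially bookkeeping in the Boardman--Vogt style cofibrant resolution $\mathcal{B}\pentagon_{o\,;\,c}$, but it is what makes the square above genuinely commutative (and not merely homotopy-commutative), so that the resulting map of homotopy fibers is a weak equivalence.
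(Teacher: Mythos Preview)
Your proposal is correct and follows essentially the same route as the paper: for the closed part you invoke Turchin's double delooping, and for the open part you set up the commutative square relating $p_{1}^{h}$ with $\Omega p_{2}^{h}$ via the weak equivalences of Proposition \ref{terminal} and \cite[Theorem 7.2]{Tourtchine:arXiv1012.5957}, then take homotopy fibers and use that $\Omega$ commutes with them. The paper draws the same square (transposed, with the explicit maps $\xi$ and $\xi_{c}$ as horizontal arrows) and makes the same observation you highlight as the ``main obstacle'': the polytope-wise construction of $\xi$ on $\mathcal{B}\pentagon_{o\,;\,c}$ restricts on the closed part to Turchin's $\xi_{c}$, so the square genuinely commutes.
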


\begin{proof}
By \cite[Theorem $7.2$]{Tourtchine:arXiv1012.5957} the space $sTot(O_{c})$ is weakly equivalent to $\Omega^{2}Operad(\pent\,;\,O_{c})$.
Proposition \ref{terminal} implies that the projection of $\xi$ onto the closed part gives rise to the commutative diagram:
\begin{center}
$
\xymatrix{
\Omega Operad_{\{o\,;\,c\}}(\mathcal{WA}\,;\,O) \ar[r]^{\xi} \ar[d]_{\Omega (p_{2})} & Bimod_{\sc0}(\square\,;\,O) \ar[d]^{p_{1}} \\
\Omega Operad(\pentagon\,;\,O_{c}) \ar[r]^{\xi_{c}} & Bimod_{As_{>0}}(\square_{c}\,;\,O_{c})
}
$
\end{center}
where $p_{1}$ and $p_{2}$ are respectively the maps $(8)$ and $(9)$.\\
Since the homotopy fibers commute with the homotopy limits, we have the following:
\begin{center}
$
\begin{array}{lll}
\Omega\big( Bimod_{As_{>0}}(\square_{c}\,;\,O_{c})\,;\,Bimod_{\sc0}(\square\,;\,O)\big) & \simeq & hofib\big( Bimod_{\sc0}(\square\,;\,O) \overset{p_{1}}{\longrightarrow} Bimod_{As_{>0}}(\square_{c}\,;\,O_{c})\big) \\ 
 & \simeq & hofib\big( \Omega Operad_{\{o\,;\,c\}}(\mathcal{WA}\,;\,O) \overset{\Omega(p_{2})}{\longrightarrow} \Omega Operad(\pent\,;\,O_{c})\big) \\ 
 & \simeq &  \Omega\,hofib\big( Operad_{\{o\,;\,c\}}(\mathcal{WA}\,;\,O) \overset{p_{2}}{\longrightarrow}  Operad(\pent\,;\,O_{c})\big)\\ 
 & \simeq & \Omega^{2} \big( Operad(\pent\,;\,O_{c})\,;\,Operad_{\{o\,;\,c\}}(\mathcal{WA}\,;\,O)\big)\,\big)
\end{array} 
$
\end{center}
\end{proof}

\bibliographystyle{amsplain}
\bibliography{bibliography}

\end{document}